\documentclass[final]{elsarticle}
\usepackage[latin1]{inputenc}
\usepackage{amsmath,bm}
\usepackage{amsthm}
\usepackage{amsfonts}
\usepackage{color}
\usepackage[dvipsnames]{xcolor}
\usepackage{amssymb}
\usepackage{graphicx}
\usepackage{comment}
\usepackage{caption}
\usepackage{subcaption}
\usepackage{empheq}
\usepackage[top=1in, bottom=1in, left=1in, right=1in]{geometry}
\usepackage{mathrsfs}

\usepackage[pdftex, unicode=true,
colorlinks,
linkcolor=red,
anchorcolor=blue,
citecolor=green
]{hyperref}

\newtheorem{thm}{Theorem}[section]
\newtheorem{lem}[thm]{Lemma}

\newtheorem{remark}[thm]{Remark}

\newcommand{\bff}{\bm f}

\newcommand{\bw}{\bm w}

\newcommand{\bu}{\bm u}

\newcommand{\bv}{\bm v}

\newcommand{\bV}{\bm V}

\newcommand{\bR}{\bm R}

\newcommand{\Rey}{\ensuremath{\mathrm{Re}}}

\numberwithin{equation}{section}

\makeatletter
\def\ps@pprintTitle{%
	\let\@oddhead\@empty
	\let\@evenhead\@empty
	\def\@oddfoot{}%
	\let\@evenfoot\@oddfoot}
\makeatother

\begin{document}
	\begin{frontmatter}
		\title{Convergence analysis of dynamically regularized Lagrange multiplier pressure correction method for the incompressible Navier-Stokes equations}
		
		\author[ouc]{Yi Shen}
		\ead{shenyi@stu.ouc.edu.cn}
		\author[ouc,lmm]{Rihui Lan\corref{cor}}
		\ead{lanrihui@ouc.edu.cn}
		\author[xtu]{Hua Wang}
		\ead{ wanghua@xtu.edu.cn}

		\address[ouc]{School of Mathematical Sciences, Ocean University of China, Qingdao, Shandong 266100, China}
		\address[lmm]{Laboratory of Marine Mathematics, Ocean University of China, Qingdao, Shandong 266100, China}
		\address[xtu]{School of Mathematics and Computational Science, Xiangtan University, Xiangtan, 411105,  China}

		\cortext[cor]{Corresponding author}
		
		\date{July 2025}
		\begin{abstract}
			\textcolor{blue}{We propose first-order pressure-correction scheme for the incompressible Navier-Stokes equations, incorporating the recently developed the Dynamically Regularized Lagrange Multiplier (DRLM) methods.} The resulting algorithms are  fully decoupled and require solving only  Poisson-type equations at each time step. Moreover, it exhibits unconditional energy stability.
			This paper provides a rigorous error analysis for the first-order scheme, establishing optimal error estimates for both velocity and pressure. Specifically, we employ mathematical induction to derive sharp velocity error bounds, while leveraging the inf-sup condition to prove optimal convergence rate for the pressure. To validate our theoretical findings, we present two numerical experiments demonstrating the accuracy and robustness of the method.
		\end{abstract}
		
		\begin{keyword}
			incompressible Navier-Stokes equations; energy stability; Lagrange multiplier; dynamic regularization; error estimates.
		\end{keyword}
	\end{frontmatter}
	
	\section{Introduction}
	\textcolor{blue}{The incompressible Navier-Stokes (NS) equations, the fundamental mathematical model for fluid flow simulation, are posed on a open bounded domain \(\Omega \subset \mathbb{R}^d (d = 2, 3)\) over a time interval $(0,T]$ with $T > 0$, and take the form:}
	\begin{subequations}\label{NS:orig}
		\begin{empheq}[left=\empheqlbrace]{align}
			&\textstyle\bu_t - \nu \Delta\bu + (\bu \cdot \nabla) \bu + \nabla p =\bff, & \text{in } \Omega \times (0, T],\label{NS:orig:mom} \\
			&\textstyle\nabla \cdot \bu = 0, & \text{in } \Omega \times (0, T],\label{NS:orig:incom}
		\end{empheq}
	\end{subequations}
	where $\bu=\bu(x,t)$ is the velocity and $p=p(x,t)$ is the pressure field. Here, \(\nu=1/\Rey\) represents the kinematic viscosity and $\Rey>0$ denotes the Reynolds number. Throughout, we assume $\nu<1$. The system is subject to the initial condition \(\bu(\cdot, 0) = \bu_0\) and the no-slip boundary condition, i.e. \(\bu|_{\partial \Omega} = 0\). 
	
	The NS equations are fundamental in many engineering applications, including aerodynamics and bio-fluid mechanics, as well as in scientific computing. As such, considerable research has been dedicated to developing and analyzing numerical methods for their approximation \cite{Chorin1968, GLOWINSKI2003, Shen1992SIAM}. However, solving these equations numerically remains highly challenging due to several key difficulties. 
	A major obstacle stems from the pressure, which lacks an independent evolution equation and must instead be inferred indirectly through the incompressibility constraint $\nabla \cdot \bu = 0$. This results in a strong coupling between velocity and pressure, complicating numerical schemes. Additionally, the nonlinear convection term introduces further complexity, making the numerical treatment even more demanding.

	%The NS equations are of broad significance in both engineering applications, such as aerodynamics and biofluid mechanics, and scientific computing. Consequently, extensive research has been devoted to the design and analysis of numerical methods for their approximation, as evidenced by \cite{Chorin1968, GLOWINSKI2003, Shen1992SIAM} and the references therein. Nevertheless, the numerical solution of these equations remains particularly challenging. One major difficulty arises from the fact that the pressure, which lacks an independent evolution equation, must be determined indirectly by the incompressibility constraint $\nabla \cdot \bu = 0$, resulting in a strong coupling between velocity and pressure. Additionally, the nonlinear convection term introduces further complexity and complicates the numerical treatment.

	To address the velocity--pressure coupling issue, numerous numerical methods have been proposed. Among these, the projection method, pioneered by Chorin \cite{Chorin1968} and Temam \cite{Temann1969}, has become a predominant approach for solving the NS equations. 
	This method leverages the Helmholtz decomposition, splitting the computation into two steps: (i) 
	a prediction step that advances the velocity while temporarily relaxing incompressibility; (ii) 
	a projection step that enforces incompressibility ($\nabla \cdot \bu = 0$). 
	Due to its sequential handling of velocity and pressure, the projection method is highly versatile, allowing compatibility with various discretization schemes. 
	Critically, it avoids solving a fully coupled system at each time step, thereby significantly reducing computational cost. 
	Specifically, the projection step involves solving only a Poisson equation for pressure, while the prediction step can be treated independently, which is a key efficiency advantage.
	
	%In response to the coupling between the velocity and pressure, a variety of numerical methods have been developed.  Among them, the projection method, first introduced by Chorin~\cite{Chorin1968} and Temam~\cite{Temann1969}, has emerged as one of the mainstream approaches for solving the  NS equations. Such method utilizes the Helmholtz decomposition. It decomposes the computation into a prediction step for the velocity and a projection step that enforces incompressibility, making it compatible with various discretization frameworks. By decoupling the velocity and pressure through a sequential solution strategy, the projection method significantly reduces computational cost and avoids the need to solve a fully coupled system. In particular, the projection step requires only the solution of a Poisson equation for the pressure, while the prediction step can be treated independently.

	The nonlinear convection term plays a pivotal role in governing the energy stability of the system. While implicit schemes \cite{CushmanRoisinBeckers2011,GRAUSANCHEZ2011} provide theoretical unconditional stability, they incur significant computational overhead, as they require solving a nonlinear system at each time step. This process involves nonlinear iterative solvers (e.g., Newton's method), the costly assembly and linearization of Jacobian matrices, complicating implementation. Furthermore, despite their theoretical stability guarantees, an improper numerical treatment of nonlinear terms can still result in energy conservation violations in practice.
	Conversely, explicit schemes \cite{Thomas1995} are computationally simpler, but suffer from numerical oscillations or even divergence when the time step exceeds stability limits, owing to their explicit handling of convection and viscous terms. As an intermediate approach, semi-implicit schemes, which treat convection velocity explicitly and gradient of velocity implicitly, remain challenging because they require solving a variable-coefficient Stokes-like  equation coupled to the velocity field. 
	Meanwhile, stabilization is often required. For example, \cite{labovsky2009stabilized} proposed adding the term $-\alpha h\Delta \bu$ into their semi-implicit scheme, where $\alpha$ serves as a tuning parameter and $h$ is the mesh size.
	Hence, to ensure unconditional energy stability, carefully designed strategies are still required.
	%advanced numerical strategies still need to be carefully employed. %Otherwise, the explicit treatment of convection velocity can still undermine stability.
	
	%As for the nonlinear convection term, it is crucial for the energy stability. Although implicit schemes \cite{CushmanRoisinBeckers2011,GRAUSANCHEZ2011} offer theoretical unconditional stability, their computational cost increases substantially since a nonlinear system must be solved at each time step. This typically involves complex nonlinear iterations (e.g., Newton's method), along with the assembly and linearization of Jacobian matrices, which complicates implementation. Moreover, even with theoretical stability guarantees, inadequate treatment of nonlinear terms in practice may still lead to violations of energy conservation. On the other hand, explicit schemes\cite{Thomas1995} are simpler to implement but are prone to numerical oscillations or divergence when the time step is too large, due to explicit treatment of convection and viscous terms. As a compromise, semi-implicit schemes, which treat convection explicitly and diffusion implicitly, still require solving a variable-coefficient coupled elliptic equation that depends on the velocity field. This necessitates careful numerical design, such as the use of Scalar Auxiliary Variable (SAV) methods, to ensure unconditional energy stability. Otherwise, explicit convection terms may still adversely affect stability.
	
	%%%%%%%%%%%%%%%%%%%%%%%%%%%%%%%%%%%%%%%%%%%%%%%%%%%
	Recent advances in numerical methods for dissipative systems focus on preserving the energy dissipation law by introducing auxiliary equations to control the energy evolution of the system. A notable example is the Scalar Auxiliary Variable (SAV) method \cite{SHEN2018JCp, Shen2019SIAM}, originally developed for gradient flow models. This approach provides unconditional energy stability while allowing explicit treatment of nonlinear terms, offering significant computational advantages. Building on this, \cite{lin2019numerical} extended the SAV method to the NS equations, 
	and by introducing an auxiliary scalar variable, transformed the nonlinear system into a linear one, thus enabling explicit handling of convection terms.
	%where an auxiliary scalar variable transforms the nonlinear system into a linear one, enabling explicit handling of convection terms.
	Despite its efficiency, the SAV method has its limitations: (i) the modified energy it preserves lacks a direct physical connection to the true energy of the original system, and (ii) its accuracy becomes sensitive to parameter choices (e.g., truncation thresholds for the auxiliary variable) for larger time steps. Improper parameter selection may compromise the physical fidelity of simulations.
	
	%Recently, some methods incorporate the energy dissipation law directly into the whole system by introducing an extra equation to control the evolution of the system "energy" when the nonlinear term is treated explicitly. The Scalar Auxiliary Variable (SAV) method \cite{SHEN2018JCp, Shen2019SIAM} was developed to handle nonlinear terms in gradient flow models and has shown great benefit and convenience for unconditional energy stability. Then, \cite{lin2019numerical} extend the SAV method to solve the NS equations. By introducing a dynamic equation on the extra scalar variable, it transforms the nonlinear system into a linear one, thereby allowing explicit treatment of convection terms. Although SAV improves computational efficiency, the resulting energy is a modified quantity that lacks a direct physical interpretation relative to the true energy of the original system. Moreover, for larger time steps, the accuracy of the method becomes sensitive to parameters such as the truncation threshold of the auxiliary variable. Improper parameter choices may lead to inaccurate physical representations.
	
	Taking inspiration from the SAV approach, the Lagrange multiplier (LM) method \cite{Cheng2020CMAME,yang2022original} was developed to construct an unconditionally energy stable scheme that preserves the original energy law. In this method, the energy constraint is enforced via a Lagrange multiplier, ensuring consistency with the physical energy dynamics. However, the scheme requires restrictive time step sizes to maintain stability due to the non-uniqueness of the multiplier, often necessitating additional post-processing truncation techniques \cite{cheng2025unique}.
	To address this limitation, recent work introduced the dynamically regularized LM (DRLM) method \cite{Doan2025JCP,doan2025dynamically}, which imposes uniqueness on the multiplier by adding the energy equation with a time derivative on the square of multiplier. This regularization enables stable simulations even at large time steps. Nevertheless, current DRLM methods  still require solving a coupled velocity-pressure  system, inheriting the computational challenges of incompressible NS simulations.
	
	%Taking inspiration from the SAV method, the Lagrange multiplier method \cite{Cheng2020CMAME,yang2022original} was proposed to produce an unconditionally energy-stable scheme related to the original energy. It treats the original energy law as a constraint of the system, and incorporates the law into the system via a Lagrange multiplier. However, the proposed scheme requires a small time step to maintain its stability, stemming from the absence of uniqueness in the Lagrange multiplier.  Usually, it requires the extra posterior truncation \cite{cheng2025unique}. Recently, \cite{Doan2025JCP,doan2025dynamically} proposed a dynamically regularized LM (DRLM) method to impose the uniqueness of the multiplier, so that the scheme can provide a stable simulation with a large time step.  DRLM methods introduce an extra time derivative on the square of multiplier to regularize the system. However, it should be noted that the DRLM methods in \cite{Doan2025JCP} still solves the velocity-pressure coupling system, which remains an inherent challenge in the numerical solution of the incompressible NS equations.
	
	%-----------------------
	{\color{blue}To further enhance the computational efficiency}, we develop a novel DRLM-based projection method in this work that combines the advantages of dynamic regularization while yielding an unconditionally energy-stable and {\color{blue} velocity-pressure} decoupled system. Our primary contributions are: 
	\begin{itemize}
		\item [(i)]
		{\color{blue}a new DRLM-based scheme that incorporates the first-order standard-incremental pressure-correction approach. By introducing the proper energy, the proposed schemes exhibit unconditional energy stability and guarantee the unconditional solvablity on the Lagrange multiplier. Moreover, it can be efficiently solved by several  Poisson equations and a quadratic algebraic equation;}
		%eliminates the need to solve nonlinear algebraic systems,  and the second-order rotational 
		\item [(ii)]
		a comprehensive numerical analysis framework featuring rigorous error estimates for the first-order scheme. We establish optimal convergence rates for the velocity, pressure, and Lagrange multiplier approximations under appropriate regularity assumptions.
	\end{itemize}
	The resulting methodology preserves the computational efficiency of projection-type methods while inheriting the theoretical guarantees of DRLM techniques.
	
	%In this paper, we are going to design the  DRLM based projection method, so that the proposed scheme can inherit their advantages, and obtain a linear, unconditional energy stable and decoupled system. In addition, we will provide a rigorous error analysis on the proposed scheme. The key contributions are as follows: (i) we develop a new DRLM-based scheme incorporating first-order standard incremental pressure correction and second-order rotational pressure correction. The scheme is fully linear, avoids solving nonlinear algebraic equations, and exhibits improved energy stability; (ii) we establish a rigorous error analysis for the first-order scheme, deriving optimal error estimates for the velocity, pressure, and Lagrange multiplier.

	The remainder of this paper is organized as follows. Section \ref{scheme} introduces the DRLM-based pressure-correction method, establishing rigorous energy stability results for the first-order scheme and analyzing some key properties of the numerical solutions. Section \ref{errest} presents optimal temporal error estimates for the velocity, Lagrange multiplier, and pressure. Numerical experiments are provided in Section \ref{num} to validate the theoretical results. We conclude in Section \ref{sect:con} with a summary of key findings and perspectives for future work.
	
	%The remainder of this paper is organized as follows. In Section 2, we introduce the DRLM-based pressure-correction methods, prove  unconditional energy stability for first- and second-order schemes and analyze some key properties of the numerical solutions. Section 3 presents optimal temporal error estimates for the velocity, Lagrange multiplier, and pressure. Numerical experiments are provided in Section 4 to validate the theoretical results. Finally, we address some conclusions in Section 5.
	
	\section{Pressure-correction DRLM scheme and its properties}\label{scheme}
	In this section, we first review the DRLM schemes proposed in \cite{Doan2025JCP}, which are equivalent to the original NS equations \eqref{NS:orig} at the continuous level. Starting from DRLM schemes, we introduce the projection method to establish a first-order pressure-correction DRLM scheme and analyze its fundamental properties. In particular, we prove the unconditional solvability of the Lagrange multiplier and regularity results for the DRLM scheme \eqref{discretization:1or}, which provide a crucial foundation for the subsequent error analysis. 
	In this paper, 
	we mainly focus on the first-order scheme and briefly present the second-order scheme and its unconditional energy stability in Remark \ref{2nd:stab}.
	%While the DRLM framework can be extended to higher-order schemes, conducting a rigorous theoretical analysis for these extensions presents significant challenges, which remain unresolved.
	\subsection{Pressure-correction  DRLM scheme}  
	Let $L^2(\Omega)$, $H^k(\Omega)$ and $H_0^k(\Omega)$ denote the standard Sobolev spaces over $\Omega$. The norm on $H^k(\Omega)$ is indicated by $\Vert\cdot\Vert_k$. For $L^2(\Omega)$, we denote by $(\cdot,\cdot)$ and $\Vert\cdot\Vert$ the inner product and the associated norm, respectively. Since the pressure is unique up to an additive constant in the NS equations, we define $H^k(\Omega)/\mathbb{R}$ as the quotient space consisting of equivalence classes of elements of $H^k(\Omega)$ differing by constants. Denote by $H$ and $V$ the following Hilbert spaces:
	\begin{equation*}
		\boldsymbol{H}=\{\bu\in\boldsymbol{L}^{2}(\Omega):\nabla\cdot\bu=0,\bu\cdot\boldsymbol{n}|_{\partial\Omega}=0\},\quad\boldsymbol{V}=\{\boldsymbol{v}\in\boldsymbol{H}_{0}^{1}(\Omega):\nabla\cdot\boldsymbol{v}=0\}.
	\end{equation*}
	Define the kinematic energy $E(\bu)=\frac12\int_{\Omega}|\bm u|^2dx$. The NS equations \eqref{NS:orig} satisfy the following energy law:
	\begin{equation}\label{stability}
		\frac{dE(\bu)}{dt} + \nu \Vert \nabla \bu\Vert^2=(\boldsymbol{f},\bu).
	\end{equation}
	DRLM approach  incorporates the energy law into the system, introduce the Lagrange multiplier $\mathcal{Q}(t)\equiv 1$ 
	and a regularization parameter $\theta > 0$, thereby reformulating the NS equations \eqref{NS:orig} as follows:
	\begin{subequations}
		\begin{empheq}[left=\empheqlbrace]{align}
			\textstyle\bu_{t}-\boldsymbol{\nu}\Delta\bu+\mathcal{Q}(\bu\cdot\nabla)\bu+\nabla p=\boldsymbol{f}, & \quad\mathrm{in~}\Omega\times(0,T], \\
			\textstyle\nabla\cdot\bu=0, & \quad\mathrm{in~}\Omega\times(0,T], \\
			\textstyle\frac{dE(\bu)}{dt} +\theta\frac{d\mathcal{Q}^{2}}{dt}=(\bu_{t}+\mathcal{Q}(\bu\cdot\nabla)\bu,\bu), & \quad\mathrm{in~}(0,T], & & & \label{energy:ns}
		\end{empheq}
	\end{subequations}
	where \eqref{energy:ns} is derived from the energy law \eqref{stability} with a regularized term $\theta\frac{d\mathcal{Q}^{2}}{dt}$. 
	
	To design the DRLM scheme in the pressure correction fashion, we will first study the energy law of the pressure correction methods. First of all, we consider a uniform partition of the time interval $[0,T]: 0 = t_0 < t_1 < \cdots < t_N = T$ with the time step size $\tau = T/N$.
	Then, the first-order incremental pressure-correction scheme \cite{goda1979multistep,Li2022MOC}  is given as follow: given $(\bm u^n, p^n)$, find $(\bm u^{n+1}, p^{n+1})$ such that
	\begin{subequations}\label{first:prj}
		\begin{empheq}[left=\empheqlbrace]{align}
			&\textstyle\frac{\hat{\bm u}^{n+1}-\bm u^n}{\tau}+\nabla  p^n-\nu\Delta\hat{\bm u}^{n+1}=0, \label{prj:mom:grad}\\ 
			&\textstyle\frac{\bm u^{n+1}-\hat{\bm u}^{n+1}}{\tau}+\nabla( p^{n+1}-  p^n)=0,\label{prj:proj:grad}\\
			&\textstyle\nabla \cdot \bm u^{n+1}=0,\label{prj:incom:grad}
		\end{empheq}
	\end{subequations}
	where the nonlinear term and the external force are ignored for simplicity. \textcolor{blue}{ Eq. \eqref{prj:mom:grad} is the prediction step by ignoring the incompressible condition, and $\hat{\bm u}^{n+1}$ is the intermediate value which is not divergence-free.  In addition, $\hat{\bm u}^{n+1}$ 
		retains the same Dirichlet boundary conditions as the original system Eq. \eqref{NS:orig}.} Define $\mathcal{K}(\bu^n,p^n)=\frac12(\int_{\Omega}|\bm u^n|^2dx +\tau^2\Vert\nabla p^n\Vert^2)$, then \eqref{first:prj} bears the following energy law:
	\textcolor{blue}{
		\begin{equation}\label{prj:energy}
			\textstyle    \mathcal{K}(\bu^{n+1},p^{n+1}) - \mathcal{K}(\bu^n,p^n)= -\nu \tau \Vert \nabla \hat{\bm u}^{n+1}\Vert^2-\frac{\Vert\hat\bu^{n+1}-\bu^n\Vert^2}{2}.
	\end{equation}}
	{\color{magenta} 
		Eq. \eqref{prj:energy} can be shown as follows. 
		Taking the $L^2$ inner product on both sides of \eqref{prj:mom:grad} with $\hat{\bm u}^{n+1}$ yields
		\begin{equation*}\label{bdf1:mom:grad:L2:1}
			\textstyle \frac{ \Vert \hat{\bm u}^{n+1} \Vert^2- \Vert \bm u^n\Vert^2 + \Vert \hat{\bm u}^{n+1}-\bm u^n\Vert^2}{2\tau}+(\nabla p^n,\hat{\bm u}^{n+1})+\nu\Vert\nabla\hat{\bm u}^{n+1}\Vert^2=0.
		\end{equation*}
		Testing Eq. \eqref{prj:proj:grad} with $\bu^{n+1}$ and $\nabla p^n$ seperately, there hold
		\begin{equation*}
			\left\{
			\begin{aligned}
				&\Vert \bu^{n+1} \Vert^2 - \Vert \hat{\bu}^{n+1} \Vert^2 + \Vert \bu^{n+1} - \hat{\bu}^{n+1}\Vert^2=0,\\
				&\tau^2 \Vert \nabla p^{n+1}\Vert^2 - \tau^2  \Vert \nabla p^n\Vert^2 - \tau^2\Vert \nabla (p^{n+1}-\nabla p^n)\Vert^2 -2\tau(\hat{\bu}^{n+1}, \nabla p^n)=0,
			\end{aligned}
			\right.
		\end{equation*}
		where we used the fact that $(\nabla p^{n+1},\bu^{n+1})=0$ due to the divergence-free condition \eqref{prj:incom:grad}, boundary conditions of $\bu^{n+1}$, and integration by parts. Moreover, Eq. \eqref{prj:proj:grad} holds
		$$ \tau^2\Vert \nabla (p^{n+1}-\nabla p^n)\Vert^2 = \Vert \bu^{n+1} - \hat{\bu}^{n+1}\Vert^2.$$
	}
	By combining the above 4 equations, we can obtain Eq. \eqref{prj:energy}. As we can see, the pressure correction scheme has a different energy definition from the coupled NS equations \eqref{NS:orig} {\color{blue}discretized with a first-order backward Euler scheme, which is
		\begin{equation*}
			\textstyle 	\frac12 \int_{\Omega}|\bm u^{n+1}|^2dx - \frac12\int_{\Omega}|\bm u^n|^2dx = -\nu \tau \Vert \nabla \bu^{n+1}\Vert^2.
		\end{equation*}
	}
	This is due to the fact that $\nabla\cdot\hat{\bm u}^{n+1}\ne0$, {\color{magenta} so we have to involve the pressure portion into the energy}. 
	Adopting the idea of DRLM schemes, we should incorporate \eqref{prj:energy} or its {\color{magenta}similar} form into the pressure correction scheme. 
	Hence, the first-order pressure-correction DRLM scheme (P-DRLM1) is proposed as follow: given $(\bm u^n, p^n, \mathcal{Q}^n)$, find $(\bm u^{n+1}, p^{n+1}, \mathcal{Q}^{n+1})$ such that 
	\begin{subequations}\label{discretization:1or}
		\begin{empheq}[left=\empheqlbrace]{align}
			&\textstyle\frac{\hat{\bm u}^{n+1}-\bm u^n}{\tau}+\mathcal{Q}^{n+1}\mathcal{N}({\bm u}^n)\bm u^n+\nabla  p^n-\nu\Delta\hat{\bm u}^{n+1}=\boldsymbol{f}^{n+1}, \label{bdf1:mom:grad}\\ 
			&\textstyle\frac{\bm u^{n+1}-\hat{\bm u}^{n+1}}{\tau}+\nabla( p^{n+1}-  p^n)=0,\label{bdf1:proj:grad}\\
			&\textstyle\nabla \cdot \bm u^{n+1}=0,\label{bdf1:incom:grad}\\
			&\textstyle\frac{\mathcal{K}(\bm u^{n+1},p^{n+1})-\mathcal{K}(\bm u^n,p^n)}{\tau}+\theta\frac{(\mathcal{Q}^{n+1})^2-(\mathcal{Q}^n)^2}{\tau}\nonumber \\
			&\textstyle\qquad\qquad\qquad\qquad\qquad\qquad=(\frac{\hat{\bm u}^{n+1}-\bm u^n}{\tau},\hat{\bm u}^{n+1})+\mathcal{Q}^{n+1}(\mathcal{N}({\bm u}^n)\bm u^n,\hat{\bm u}^{n+1})+(\nabla  p^n,\hat{\bm u}^{n+1}),&\label{bdf1:sav:grad}
		\end{empheq}
	\end{subequations}
	where $\mathcal{N}({\bm u})\bv:= (\bm u\cdot\nabla)\bm v$. {\color{blue}The above scheme is   coupled for $(\hat{\bm u}^{n+1}, \bm u^{n+1}, p^{n+1}, \mathcal{Q}^{n+1})$. However, they can be efficiently decoupled, which is fully explained in Section \ref{sec:decouple}.}
	\begin{remark}
		Conventional projection methods \cite{Shen1992SIAM} typically introduce temporal splitting errors that result in a dependence of the pressure $p$ on the time step size $\tau$, specifically $p \sim \mathcal{O}(\tau^{-1})$. If left is unconstrained, this pressure term can severely undermine energy stability for small values of $\tau$, thereby compromising numerical accuracy. The introduced  $\tau^2$ in front of $\Vert\nabla p\Vert^2$ in the energy $\mathcal{K}(\bu,p)$ serves 
		to counteract the $\tau^{-1}$ scaling of the pressure, ensuring that the discrete energy continues to satisfy a dissipation inequality.
		%two principal theoretical purposes:
		%\begin{itemize}
		%    \item to counteract the $\tau^{-1}$ scaling of the pressure, ensuring that the discrete energy continues to satisfy a dissipation inequality;
		%    \item to reinforce the incompressibility constraint within a variational (Galerkin) framework by naturally coupling the velocity and pressure fields through the pressure gradient $\nabla p$, thereby preserving numerical stability and physical consistency. By mitigating this step-size dependency, the correction term guarantees that the energy dissipation inequality
		%\begin{equation*}
		%    E^{n+1}-E^n \leq  - \nu \tau\Vert\nabla\bu^{n+1}\Vert^2
		%\end{equation*}
		%holds even for large time steps.
		%\end{itemize}
		
		%In the context of the incompressible NS equations, neglecting the nonlinear term $\bu \cdot \nabla \bu$ (as in the linear Stokes equations) leads to monotonic decay of kinetic energy, with energy evolution governed solely by viscous dissipation. When the nonlinear term is included, although the global kinetic energy still decays due to viscosity, the nonlinearity enables energy transfer among different vortex structures-such as through turbulent energy cascades-and may induce transient local growth of kinetic energy. Consequently, the design of numerical methods must pay particular attention to ensuring energy stability in the presence of such nonlinear effects.
	\end{remark}

	\subsection{Energy stability}
	Let us state a result on the existence and uniqueness of a strong solution to the NS equations \eqref{NS:orig}, see \cite{Doan2025IMA}.
	\begin{lem}\label{strong solution}
		Assume $\bu_0\in \boldsymbol{V}$ and $\bff\in \boldsymbol{L}^\infty(0,T;\boldsymbol{L}^2(\Omega))$. There exists a positive time $T^*$, with $T^* = T$ if $d = 2$ and $T^* = T^*(\bu_0) \leq T$ if $d = 3$, such that \eqref{NS:orig} admits a unique strong solution $(\bu, p)$ satisfying
		\begin{equation*}
			\begin{aligned}
				\textstyle& \bu\in L^2(0,T^*;\boldsymbol{H}^2(\Omega))\cap \mathscr{C}([0,T^*];\bV),\quad\bu_t\in L^2(0,T^*;\boldsymbol{L}^2(\Omega)), \\
				\textstyle& p\in L^2(0,T^*;H^1(\Omega)/\mathbb{R}).
			\end{aligned}
		\end{equation*}
		Moreover, if $d = 2$ or, in the case $d = 3$, if $\Vert\bu_0\Vert_1$ and $\Vert\bff\Vert_{\boldsymbol{L}^\infty(0,T;\boldsymbol{L}^2(\Omega))}$ are sufficiently small, then the solution $(\bu, p)$ exists for all $t\in [0,T]$, i.e. $T^* = T$ for $d\in{2,3}$, and
		\begin{equation}\label{sup:ut1}
			\sup_{t\in[0,T]}\Vert\bu(t)\Vert_1<\infty.
		\end{equation}
	\end{lem}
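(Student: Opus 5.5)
This is the classical local (and, under smallness or in 2D, global) strong-solution theory for the Navier--Stokes equations, so I would follow the Faedo--Galerkin approach based on Stokes eigenfunctions, as in Temam's monograph. Let $A=-P\Delta$ be the Stokes operator, where $P$ is the Leray projector onto $\boldsymbol{H}$, and let $\{\bw_k\}$ be its eigenfunctions. I would seek $\bu_m(t)=\sum_{k\le m}g_{km}(t)\bw_k$ solving the projected system
\begin{equation*}
(\partial_t\bu_m,\bv)+\nu(\nabla\bu_m,\nabla\bv)+((\bu_m\cdot\nabla)\bu_m,\bv)=(\bff,\bv)
\end{equation*}
for all $\bv$ in the span of $\bw_1,\dots,\bw_m$, with initial datum $\bu_m(0)=P_m\bu_0$. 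This is a finite system of ODEs, so local existence is immediate, and the a priori bounds below extend it to $[0,T^*]$.

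The first estimate is the basic energy bound from testing with $\bu_m$, using $((\bu_m\cdot\nabla)\bu_m,\bu_m)=0$. It gives $\bu_m$ bounded in $L^\infty(0,T;\boldsymbol{L}^2)\cap L^2(0,T;\bV)$.

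The key estimate is obtained by testing with $A\bu_m$, which is allowed because $A$ maps the Galerkin space into itself:
\begin{equation*}
\tfrac12\tfrac{d}{dt}\Vert\nabla\bu_m\Vert^2+\nu\Vert A\bu_m\Vert^2\le |((\bu_m\cdot\nabla)\bu_m,A\bu_m)|+\Vert\bff\Vert\,\Vert A\bu_m\Vert.
\end{equation*}
I would bound the trilinear term using $\Vert\bu\Vert_2\le C\Vert A\bu\Vert$ (elliptic Stokes regularity, which needs a smooth or convex domain) together with Gagliardo--Nirenberg/Agmon inequalities. In $d=3$ this gives $C\Vert\nabla\bu_m\Vert^{3/2}\Vert A\bu_m\Vert^{3/2}$, and after Young's inequality
\begin{equation*}
y'+\nu\Vert A\bu_m\Vert^2\le C\nu^{-3}y^3+C\nu^{-1}\Vert\bff\Vert^2,\qquad y=\Vert\nabla\bu_m\Vert^2 .
\end{equation*}
Comparing with the Riccati ODE $z'=Cz^3+F$ shows that $y$ stays bounded on some $[0,T^*(\bu_0)]$ with $T^*$ independent of $m$.

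In $d=2$, Ladyzhenskaya's inequality gives instead $y'\le C\nu^{-3}\Vert\bu_m\Vert^2\Vert\nabla\bu_m\Vert^2\,y+C\nu^{-1}\Vert\bff\Vert^2$. Since $\Vert\nabla\bu_m\Vert^2\in L^1(0,T)$ by the energy estimate, Gronwall's inequality yields a bound on all of $[0,T]$, so $T^*=T$.

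For small data in $d=3$ I would use Poincaré's inequality, $\Vert A\bu_m\Vert^2\ge\lambda_1 y$, to get
\begin{equation*}
y'+\tfrac{\nu}{2}\Vert A\bu_m\Vert^2+y\bigl(\tfrac{\nu\lambda_1}{2}-C\nu^{-3}y^2\bigr)\le C\nu^{-1}\Vert\bff\Vert^2_{L^\infty L^2}.
\end{equation*}
A continuity argument then shows that if $y(0)$ and $\Vert\bff\Vert_{\boldsymbol{L}^\infty(0,T;\boldsymbol{L}^2(\Omega))}$ are small enough, $y$ never reaches the threshold where the bracket changes sign. Hence $y$ stays bounded for all $t\in[0,T]$, which gives \eqref{sup:ut1}.

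Next I bound $\partial_t\bu_m$ in $L^2(0,T^*;\boldsymbol{L}^2)$ by testing with $\partial_t\bu_m$, or directly from the equation using the $L^2(H^2)$ bound. With uniform bounds in $L^\infty(\bV)\cap L^2(\boldsymbol{H}^2)$ and $\partial_t\bu_m\in L^2(\boldsymbol{L}^2)$, the Aubin--Lions lemma gives strong convergence in $L^2(\bV)$, which is enough to pass to the limit in the nonlinear term. Continuity $\bu\in\mathscr{C}([0,T^*];\bV)$ follows from the Lions--Magenes interpolation lemma applied to $\bu\in L^2(\boldsymbol{H}^2)$ with $\bu_t\in L^2(\boldsymbol{L}^2)$.

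The pressure is recovered via de Rham's theorem: $\bff-\bu_t+\nu\Delta\bu-(\bu\cdot\nabla)\bu\in L^2(0,T^*;\boldsymbol{L}^2)$ annihilates divergence-free test fields, so it equals $\nabla p$ with $p\in L^2(0,T^*;H^1/\mathbb{R})$.

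For uniqueness, let $\bw=\bu_1-\bu_2$ and test the difference equation with $\bw$. The remaining nonlinear term is $((\bw\cdot\nabla)\bu_2,\bw)\le\Vert\bw\Vert_{L^3}\Vert\nabla\bu_2\Vert\,\Vert\bw\Vert_{L^6}$. Interpolation and Young's inequality reduce this to $\frac{\nu}{2}\Vert\nabla\bw\Vert^2+C\Vert\nabla\bu_2\Vert^4\Vert\bw\Vert^2$, which is integrable in time by \eqref{sup:ut1}, so Gronwall's inequality gives $\bw=0$.

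The main obstacle is the $H^1$ estimate in $d=3$, specifically the uniform-in-$m$ control of the cubic Riccati term and the smallness/continuity argument for global existence. The rest of the argument is standard compactness.
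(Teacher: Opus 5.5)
The paper does not prove this lemma; it simply quotes it from \cite{Doan2025IMA}. Your Faedo--Galerkin argument is the standard proof of this classical result, and it is correct:
\begin{itemize}
\item in $d=3$, testing with $A\bu_m$ and bounding the trilinear term by $C\Vert\nabla\bu_m\Vert^{3/2}\Vert A\bu_m\Vert^{3/2}$ gives the cubic Riccati comparison;
\item in $d=2$, Ladyzhenskaya plus Gronwall gives the bound on $[0,T]$;
\item the barrier/continuity argument handles small data in $d=3$;
\item Aubin--Lions gives compactness, Lions--Magenes gives $\bu\in\mathscr{C}([0,T^*];\bV)$, and de Rham recovers $p$;
\item the $L^3$--$L^6$ interpolation estimate gives uniqueness.
\end{itemize}

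What your route adds over the bare citation is that it exposes a hypothesis the paper never states. The bound $\Vert\bu\Vert_2\le C\Vert A\bu\Vert$, and hence the claimed $L^2(0,T^*;\boldsymbol{H}^2(\Omega))$ regularity, requires $\partial\Omega$ to be smooth or $\Omega$ to be convex. The paper only assumes an open bounded domain.

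One small correction to your uniqueness step. The integrability of $\Vert\nabla\bu_2\Vert^4$ on $[0,T^*]$ should come from $\bu_2\in\mathscr{C}([0,T^*];\bV)$, not from \eqref{sup:ut1}. The bound \eqref{sup:ut1} is asserted only in the global-existence case.
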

	We will demonstrate that the P-DRLM1 scheme \eqref{discretization:1or} is unconditionally energy stable.
	\begin{lem}\label{energy:stab}
		In the absense of the external force $\bff$, the first-order DRLM scheme \eqref{discretization:1or} is unconditionally stable in the sense that
		\begin{equation*}
			\textstyle \mathcal{K}(\bu^{n+1},p^{n+1}) + \theta [(\mathcal{Q}^{n+1})^2-1] \leq  \mathcal{K}(\bm u^n,p^n) + \theta[(\mathcal{Q}^{n})^2-1],n=0,1,\cdots,N.
		\end{equation*}
	\end{lem}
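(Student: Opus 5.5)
The plan is to take the momentum prediction step \eqref{bdf1:mom:grad} with $\bff=0$, pair it in $L^2$ with $\hat{\bu}^{n+1}$, and use the resulting identity to rewrite the right-hand side of the energy equation \eqref{bdf1:sav:grad}. Pairing \eqref{bdf1:mom:grad} with $\hat{\bu}^{n+1}$ gives
\begin{equation*}
\textstyle \left(\frac{\hat{\bu}^{n+1}-\bu^n}{\tau},\hat{\bu}^{n+1}\right)+\mathcal{Q}^{n+1}(\mathcal{N}(\bu^n)\bu^n,\hat{\bu}^{n+1})+(\nabla p^n,\hat{\bu}^{n+1}) = \nu(\Delta\hat{\bu}^{n+1},\hat{\bu}^{n+1}).
\end{equation*}
Since $\hat{\bu}^{n+1}$ satisfies the homogeneous Dirichlet condition, integration by parts turns the right-hand side into $-\nu\Vert\nabla\hat{\bu}^{n+1}\Vert^2$. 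The left-hand side is exactly the right-hand side of \eqref{bdf1:sav:grad}, so substituting gives
\begin{equation*}
\textstyle \mathcal{K}(\bu^{n+1},p^{n+1})-\mathcal{K}(\bu^n,p^n)+\theta\big[(\mathcal{Q}^{n+1})^2-(\mathcal{Q}^n)^2\big] = -\nu\tau\Vert\nabla\hat{\bu}^{n+1}\Vert^2\le 0 .
\end{equation*}

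Writing $(\mathcal{Q}^{n+1})^2-(\mathcal{Q}^n)^2=[(\mathcal{Q}^{n+1})^2-1]-[(\mathcal{Q}^n)^2-1]$ and rearranging yields the claimed inequality for each $n$.

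The main point to watch is the order of the argument. We never need the projection identities \eqref{bdf1:proj:grad}--\eqref{bdf1:incom:grad} or the energy law \eqref{prj:energy}, because the scheme imposes the energy relation \eqref{bdf1:sav:grad} directly. In particular, the nonlinear term needs no skew-symmetry argument: it cancels exactly, since it appears with the same coefficient $\mathcal{Q}^{n+1}$ in both equations. The only assumption used is that a solution $(\hat{\bu}^{n+1},\bu^{n+1},p^{n+1},\mathcal{Q}^{n+1})$ of \eqref{discretization:1or} exists; solvability is treated separately.
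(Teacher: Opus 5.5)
Your proposal is correct and is the paper's proof: test \eqref{bdf1:mom:grad} with $\hat{\bu}^{n+1}$, substitute the result into the right-hand side of \eqref{bdf1:sav:grad} to get the discrete energy law with dissipation $-\nu\tau\Vert\nabla\hat{\bu}^{n+1}\Vert^2$, then rearrange. As in the paper, the inequality only makes sense for $n\le N-1$, since $\mathcal{Q}^{N+1}$ is not defined.
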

	\begin{proof}
		Taking the $L^2$ inner product on both sides of \eqref{bdf1:mom:grad} with $\hat{\bm u}^{n+1}$ yields
		\begin{equation}\label{bdf1:mom:grad:L2}
			\textstyle(\frac{\hat{\bm u}^{n+1}-\bm u^n}{\tau},\hat{\bm u}^{n+1})+\mathcal{Q}^{n+1}(\mathcal{N}(\bm u^n)\bm u^n,\hat{\bm u}^{n+1})+(\nabla p^n,\hat{\bm u}^{n+1})+\nu\Vert\nabla\hat{\bm u}^{n+1}\Vert^2=0.
		\end{equation}
		Combining Eq. \eqref{bdf1:mom:grad:L2} and \eqref{bdf1:sav:grad} yields
		\begin{equation}\label{energy:law}
			\textstyle\frac{\textcolor{blue}{\mathcal{K}(\bu^{n+1},p^{n+1})-\mathcal{K}(\bm u^n,p^n)}}{\tau}+\theta\frac{(\mathcal{Q}^{n+1})^2-(\mathcal{Q}^n)^2}{\tau}=-\nu\Vert\nabla\hat{\bm u}^{n+1}\Vert^2\leq 0.
		\end{equation}
		This implies that
		\begin{equation*}
			\textstyle \mathcal{K}(\bu^{n+1},p^{n+1}) + \theta (\mathcal{Q}^{n+1})^2 \leq  \mathcal{K}(\bm u^n,p^n) + \theta(\mathcal{Q}^{n})^2 ,
		\end{equation*}
		which completes the proof of Lemma \ref{energy:stab}.
	\end{proof}
	\begin{remark}\label{energy:bound}
		{\color{magenta}
			First, the obtained energy law Eq. \eqref{energy:law} is slightly different from  Eq. \eqref{prj:energy}. This is due to the term $ (\frac{\hat{\bm u}^{n+1}-\bm u^n}{\tau},\hat{\bm u}^{n+1})$  we choose in Eq. \eqref{bdf1:sav:grad} instead of $\frac{\Vert\hat{\bm u}^{n+1} \Vert^2 - \Vert \bm u^n\Vert^2}{2\tau}$.
			This modification ensures P-DRLM1 retains structural similarities with existing DRLM schemes, meanwhile its energy dissipation rate aligns more closely with that of the coupled NS equations.
		}Next, we have from Lemma \ref{energy:stab} that
		\begin{equation*}
			\textstyle0 < \mathcal{K}(\bu^{n+1},p^{n+1}) + \theta (\mathcal{Q}^{n+1})^2 < \mathcal{K}(\bu^n,p^n) + \theta (\mathcal{Q}^n)^2 < \dots < \mathcal{K}(\bu^0,p^0) + \theta (\mathcal{Q}^0)^2 = \mathcal{K}(\bu^0,p^0) + \theta .
		\end{equation*}
		Thus, the discrete energy $\{\mathcal{K}(\bu^n)\}_{n=0}^N$ is uniformly bounded for the first-order scheme \eqref{discretization:1or}.
	\end{remark}
	\begin{remark}\label{2nd:stab}
		
		The second-order pressure-correction DRLM scheme (P-DRLM2) can be designed via the second-order backward differentiation formula, that is: for $n\geq 1$, given $(\bu^n, p^n, \mathcal{Q}^{n})$ and $(\bu^{n-1}, p^{n-1}, \mathcal{Q}^{n-1})$, find $(\bu^{n+1}, p^{n+1}, \mathcal{Q}^{n+1})$, such that
		%, based on the projection method, can be designed as follow
		\begin{subequations}\label{discretization:2or}
			\begin{empheq}[left=\empheqlbrace]{align}
				\textstyle&\frac{3\hat{\bm u}^{n+1}-4\bm u^n+\bu^{n-1}}{2\tau}+\mathcal{Q}^{n+1}\mathcal{N}(\tilde{\bm u}^{n+1})\tilde{\bm u}^{n+1}+\nabla p^n-\nu\Delta\hat{\bm u}^{n+1}=0, \label{bdf2:mom:grad}\\ 
				\textstyle&\frac{3\bm u^{n+1}-3\hat{\bm u}^{n+1}}{2\tau}+\nabla(p^{n+1}-p^n+\nu\nabla\cdot\hat{\bu}^{n+1})=0,\label{bdf2:proj:grad}\\
				\textstyle&\nabla \cdot \bm u^{n+1}=0,\label{bdf2:incom:grad}\\
				\textstyle&\frac{3\mathcal{K}(\bm u^{n+1},p^{n+1})-4\mathcal{K}(\bm u^n,p^n)+\mathcal{K}(\bm u^{n-1},p^{n-1})}{2\tau}+\theta\frac{3(\mathcal{Q}^{n+1})^2-4(\mathcal{Q}^n)^2+(\mathcal{Q}^{n-1})^2}{2\tau}\nonumber \\
				\textstyle&\qquad\qquad\qquad\quad=(\frac{3\hat{\bm u}^{n+1}-4\bm u^n+\bu^{n-1}}{2\tau},\hat{\bm u}^{n+1})+(\mathcal{Q}^{n+1}\mathcal{N}(\tilde{\bm u}^{n+1})\tilde{\bm u}^{n+1}+\nabla p^n,\hat{\bm u}^{n+1}).&\label{bdf2:sav:grad}
			\end{empheq}
		\end{subequations}
		where $\tilde{\bu}^{n+1}=2\bu^n-\bu^{n-1}$. At the first step, we can use P-DRLM1 to compute $(\bu^{1}, p^{1})$ and set $\mathcal{Q}^{1}=1$.
		%In this paper, we will focus on the numerical analysis on the first-order scheme.
		
		%\begin{lem}\label{energy:stab:2or}
		Similarly, when $\bff=0$, P-DRLM2 scheme \eqref{discretization:2or} is unconditionally stable in the sense that
		\begin{equation*}
			\begin{aligned}
				\textstyle &\frac32E(\bu^{n+1},p^{n+1})-\frac12E(\bu^n,p^n) + \theta [\frac32(\mathcal{Q}^{n+1})^2-\frac12(\mathcal{Q}^n)^2-1]\\
				\textstyle\leq&  \frac32E(\bu^n,p^n)-\frac12E(\bm u^{n-1},,p^{n-1}) + \theta[\frac32(\mathcal{Q}^n)^2-\frac12(\mathcal{Q}^{n-1})^2-1],\qquad \forall n=2,\cdots,N.
			\end{aligned}
		\end{equation*}
		%\end{lem}
		The above energy law can be shown by multiplying both sides of equation \eqref{bdf2:mom:grad} with $\hat{\bm u}^{n+1}$, which turns to be
		%\begin{proof}
		%	Multiplying both sides of equation \eqref{bdf2:mom:grad} with $\hat{\bm u}^{n+1}$ , we find that
		\begin{equation*}
			\textstyle(\frac{3\hat{\bm u}^{n+1}-4\bm u^n+\bu^{n-1}}{2\tau},\hat{\bu}^{n+1})+\mathcal{Q}^{n+1}(\mathcal{N}(\tilde{\bm u}^{n+1})\tilde{\bm u}^{n+1},\hat{\bu}^{n+1})+(\nabla p^n,\hat{\bu}^{n+1})+\nu\Vert\nabla\hat{\bm u}^{n+1}\Vert^2=0.
		\end{equation*}
		This, together with \eqref{bdf2:sav:grad}, gives us 
		\begin{equation*}
			\textstyle\frac{3E(\bm u^{n+1},p^{n+1})-4E(\bm u^n,p^n)+E(\bm u^{n-1},p^{n-1})}{2\tau}+\theta\frac{3(\mathcal{Q}^{n+1})^2-4(\mathcal{Q}^n)^2+(\mathcal{Q}^{n-1})^2}{2\tau}\leq 0.
		\end{equation*}
		or equivalently,
		\begin{equation*}
			\textstyle 3E(\bu^{n+1},p^{n+1})-E(\bu^n,p^n) + \theta [3(\mathcal{Q}^{n+1})^2-(\mathcal{Q}^n)^2] \leq  3E(\bu^n,p^n)-E(\bm u^{n-1},p^{n-1}) + \theta[3(\mathcal{Q}^n)^2-(\mathcal{Q}^{n-1})^2] .
		\end{equation*}
		Thus, we obtain the desired estimate.
		In this paper, we will focus on the numerical analysis on P-DRLM1 scheme \eqref{discretization:1or}.
		%	\end{proof}
\end{remark}

\subsection{\textcolor{blue}{Decoupling Strategy and well-posedness of solutions}}
\label{sec:decouple}
We begin by establishing the unique solvability of the Lagrange multiplier $\mathcal{Q}$ and the corresponding regularity properties of the DRLM scheme \eqref{discretization:1or}.
\begin{lem}\label{unique solution}
	The DRLM scheme \eqref{discretization:1or} admits a unique positive solution $\mathcal{Q}^{n+1}>0$ with
	\begin{equation*}
		\bu^n\in\boldsymbol{V}\cap\boldsymbol{H}^2(\Omega),\quad p^n\in H^1(\Omega)/\mathbb{R},\quad \mathcal{Q}^n>0,
	\end{equation*}
	for every $n\in\{0, 1, \cdots, N\}$.
\end{lem}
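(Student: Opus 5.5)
The proof goes by induction on $n$, and the decoupling of \eqref{discretization:1or} is made explicit along the way. The base case takes $\bu^0=\bu_0\in\bV\cap\boldsymbol{H}^2(\Omega)$, $p^0\in H^1(\Omega)/\mathbb{R}$ (e.g.\ the exact initial pressure) and $\mathcal{Q}^0=1>0$. Assume now $\bu^n\in\bV\cap\boldsymbol{H}^2$, $p^n\in H^1/\mathbb{R}$, $\mathcal{Q}^n>0$. Since \eqref{bdf1:mom:grad} is linear in $(\hat\bu^{n+1},\mathcal{Q}^{n+1})$, I split
\begin{equation*}
\hat\bu^{n+1}=\hat\bu_1^{n+1}+\mathcal{Q}^{n+1}\hat\bu_2^{n+1},
\end{equation*}
where $\hat\bu_1^{n+1},\hat\bu_2^{n+1}\in\boldsymbol{H}_0^1$ solve the Helmholtz-type problems
\begin{equation*}
(I-\tau\nu\Delta)\hat\bu_1^{n+1}=\bu^n+\tau(\bff^{n+1}-\nabla p^n),\qquad (I-\tau\nu\Delta)\hat\bu_2^{n+1}=-\tau\mathcal{N}(\bu^n)\bu^n .
\end{equation*}
By Lax--Milgram both are uniquely solvable. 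Their right-hand sides lie in $\boldsymbol{L}^2$, because $\bu^n\in\boldsymbol{H}^2\subset\boldsymbol{L}^\infty$ for $d\le3$ gives $\mathcal{N}(\bu^n)\bu^n\in\boldsymbol{L}^2$. Elliptic regularity (assuming $\Omega$ is smooth or convex) then gives $\hat\bu_i^{n+1}\in\boldsymbol{H}^2$.

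The projection step \eqref{bdf1:proj:grad}--\eqref{bdf1:incom:grad} is linear as well. Taking the divergence gives a Neumann Poisson problem $\Delta\phi_i=\tau^{-1}\nabla\cdot\hat\bu_i^{n+1}$ with $\partial_n\phi_i=0$. Its compatibility condition holds because $\hat\bu_i^{n+1}$ vanishes on $\partial\Omega$, and the solution satisfies $\phi_i\in H^2/\mathbb{R}$. I then set $\bu_i^{n+1}=\hat\bu_i^{n+1}-\tau\nabla\phi_i$, so that
\begin{equation*}
\bu^{n+1}=\bu_1^{n+1}+\mathcal{Q}^{n+1}\bu_2^{n+1},\qquad p^{n+1}=p^n+\phi_1+\mathcal{Q}^{n+1}\phi_2 .
\end{equation*}
This yields $p^{n+1}\in H^1/\mathbb{R}$ and $\nabla\cdot\bu^{n+1}=0$. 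Here $\bu^{n+1}\in\boldsymbol{H}^1$ with only $\bu^{n+1}\cdot\bm n=0$ on the boundary; the standard pressure-correction convention that $\bV\cap\boldsymbol{H}^2$ is read through $\hat\bu^{n+1}$, or the $\boldsymbol{H}$-valued end-of-step velocity, will need to be stated carefully.

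Substituting these affine-in-$\mathcal{Q}^{n+1}$ expressions into \eqref{bdf1:sav:grad} gives a scalar quadratic equation
\begin{equation*}
A(\mathcal{Q}^{n+1})^2+B\mathcal{Q}^{n+1}+C=0 .
\end{equation*}
The key observation is that, by the computation behind \eqref{energy:law}, the scheme is equivalent to this quadratic. Collecting the $\mathcal{Q}^2$ terms gives
\begin{equation*}
A=\theta+\tfrac12\|\bu_2^{n+1}\|^2+\tfrac{\tau^2}{2}\|\nabla\phi_2\|^2+\tau\nu\|\nabla\hat\bu_2^{n+1}\|^2\ge\theta>0 .
\end{equation*}
To obtain this form I rewrite the right-hand side of \eqref{bdf1:sav:grad} using \eqref{bdf1:mom:grad} as $-\nu\|\nabla\hat\bu^{n+1}\|^2+(\bff^{n+1},\hat\bu^{n+1})$, which is quadratic in $\mathcal{Q}^{n+1}$ with a nonpositive leading part that moves to the left. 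The constant term is
\begin{equation*}
C=\tfrac12\|\bu_1^{n+1}\|^2+\tfrac{\tau^2}{2}\|\nabla(p^n+\phi_1)\|^2+\tau\nu\|\nabla\hat\bu_1^{n+1}\|^2-\tau(\bff^{n+1},\hat\bu_1^{n+1})-\mathcal{K}(\bu^n,p^n)-\theta(\mathcal{Q}^n)^2 .
\end{equation*}

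The main obstacle is showing that exactly one root is positive. The plan is to prove $C<0$, since then $AC<0$ forces two real roots of opposite sign and a unique positive $\mathcal{Q}^{n+1}$. The negative part of $C$ contains $-\theta(\mathcal{Q}^n)^2$, but the $\hat\bu_1^{n+1}$ terms are not obviously dominated by $\mathcal{K}(\bu^n,p^n)$. If a direct sign argument for $C$ fails, I would try to avoid it, for example by exploiting that the $\hat\bu_1$ energy identity (test its equation with $\hat\bu_1^{n+1}$ and use the projection identities as in \eqref{prj:energy}) gives $\tfrac12\|\bu_1^{n+1}\|^2+\tfrac{\tau^2}{2}\|\nabla(p^n+\phi_1)\|^2+\tau\nu\|\nabla\hat\bu_1^{n+1}\|^2\le\mathcal{K}(\bu^n,p^n)+\tau(\bff^{n+1},\hat\bu_1^{n+1})$. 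This is the point to verify, and the identity may need care with the cross term $(\nabla p^n,\hat\bu_1^{n+1})$. If it holds, $C\le-\theta(\mathcal{Q}^n)^2<0$ follows by induction from $\mathcal{Q}^n>0$. The positive root is
\begin{equation*}
\mathcal{Q}^{n+1}=\frac{-B+\sqrt{B^2-4AC}}{2A}>0 ,
\end{equation*}
and it determines $\hat\bu^{n+1}$, $\bu^{n+1}$ and $p^{n+1}$ with the stated regularity, completing the induction.
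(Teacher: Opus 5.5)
Your proposal is essentially the paper's proof. It uses the same splitting of $(\hat{\bu}^{n+1},\bu^{n+1},p^{n+1})$ into parts independent of $\mathcal{Q}^{n+1}$, the same quadratic with leading coefficient at least $\theta$, and the same route to $C<0$ through the energy identity of the $\hat{\bu}_1^{n+1}$ subproblem plus the projection identities (the paper does not establish the regularity part either, so your caveat that $\bu^{n+1}$ only satisfies $\bu^{n+1}\cdot\boldsymbol{n}=0$ is fair). The step you flagged holds, even as an identity: since $(\nabla p^n,\bu_1^{n+1})=0$ and $\hat{\bu}_1^{n+1}-\bu_1^{n+1}=\tau\nabla\phi_1$, polarizing the cross term gives $C=-\tfrac12\Vert\hat{\bu}_1^{n+1}-\bu^n\Vert^2-\theta(\mathcal{Q}^n)^2<0$, with the $\bff^{n+1}$ contributions cancelling exactly; this is the paper's $\mathbf{C}^{n+1}$ up to a factor $2$ (the paper simply drops $\bff$).
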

\begin{proof}
	Follwing \cite{Doan2025JCP,LiShen2020SIAM,Li2022MOC,LIN2019JCP}, the first-order DRLM scheme \eqref{discretization:1or} admits an efficient numerical implementation via a decomposition approach applied to the unknown fields. In particular, by introducing a Lagrange multiplier $\mathcal{Q}^{n+1}$, the velocity field $\bu^{n+1}$, the projected velocity field $\hat{\bu}^{n+1}$,  and the pressure field $p^{n+1}$ are decomposed into two independent components respectively:
	\begin{equation}\label{impl}
		\left\{
		\begin{aligned}
			\textstyle&\hat{\bm u}^{n+1}=\hat{\bm u}_1^{n+1}+\mathcal{Q}^{n+1}\hat{\bu}_2^{n+1}:=\hat{\bm u}_{1}+\mathcal{Q}^{n+1}\hat{\bu}_{2}, \\ 
			\textstyle&\bm u^{n+1}=\bm u_1+\mathcal{Q}^{n+1}\bm u_2,\\
			\textstyle&p^{n+1}=p_1+\mathcal{Q}^{n+1}p_2,
		\end{aligned}
		\right.
	\end{equation}
	This decomposition transforms the original coupled system into a set of decoupled sub-problems, significantly improving computational efficiency while preserving the structure of the numerical scheme. The resulting system is formulated as follows:
	\begin{subequations}\label{decomposition:1or}
		\begin{empheq}[left=\empheqlbrace]{align}
			\textstyle&\frac{\hat{\bu}_1+\mathcal{Q}^{n+1}\hat{\bu}_2-\bu^n}{\tau}+\mathcal{Q}^{n+1}\mathcal{N}(\bm u^n)\bm u^n+\nabla p^n-\nu\Delta(\hat{\bm u}_{1}+\mathcal{Q}^{n+1}\hat{\bu}_{2})=0,\label{decom:mom:grad}\\ 
			\textstyle&\frac{\bu_1+\mathcal{Q}^{n+1}\bu_2-\hat{\bu}_1-\mathcal{Q}^{n+1}\hat{\bu}_2}{\tau}+\nabla(p_1+\mathcal{Q}^{n+1}p_2-p^n)=0,\label{decom:proj:grad}\\
			\textstyle&\nabla\cdot(\bu_1+\mathcal{Q}^{n+1}\bu_2)=0, \label{decom:incom:grad}\\
			\textstyle&\frac{\mathcal{K}(\bu_1+\mathcal{Q}^{n+1}\bu_2,p_1+\mathcal{Q}^{n+1}p_2)-\mathcal{K}(\bm u^n,p^n)}{\tau}+\theta\frac{(\mathcal{Q}^{n+1})^2-(\mathcal{Q}^n)^2}{\tau}\nonumber \\
			\textstyle&\qquad\qquad\qquad\qquad\qquad\qquad\qquad\qquad \qquad\qquad\qquad\qquad\qquad\qquad=-\nu\Vert\nabla(\hat{\bu}_1+\mathcal{Q}^{n+1}\hat{\bu}_2)\Vert^2.&\label{decom:sav:grad}
		\end{empheq}
	\end{subequations}
	The coupled system \eqref{decom:mom:grad} can be decoupled into the following two independent subproblems:
	\begin{subequations}\label{decl}
		\begin{empheq}[left=\empheqlbrace]{align}
			\textstyle &\frac{\hat{\bu}_1-\bu^n}{\tau}+\nabla p^n-\nu\Delta\hat{\bu}_1=0,\label{decl:nonlag}\\
			\textstyle&\frac{\hat{\bu}_2}{\tau}+\mathcal{N}(\bu^n)\bu^n-\nu\Delta\hat{\bu}_2=0,\label{decl:lag}
		\end{empheq}
	\end{subequations}
	We then decouple equations \eqref{decom:proj:grad} and \eqref{decom:incom:grad} into independent generalized Stokes-type systems:
	\begin{subequations}\label{decl:proj:nonlag}
		\begin{empheq}[left=\empheqlbrace]{align}
			\textstyle&\frac{\bu_1-\hat{\bu}_1}{\tau}+\nabla p_1-\nabla p^n=0,\label{proj:nonlag:sys} \\
			\textstyle&\nabla\cdot\bu_1=0,\label{proj:nonlag:div}
		\end{empheq}
	\end{subequations}
	and
	\begin{subequations}\label{decl:proj:lag}
		\begin{empheq}[left=\empheqlbrace]{align}
			\textstyle&\frac{\bu_2-\hat{\bu}_2}{\tau}+\nabla p_2=0,\label{proj:lag:sys} \\
			\textstyle&\nabla\cdot\bu_2=0,\label{proj:lag:div}
		\end{empheq}
	\end{subequations}
	These three systems \eqref{decl}, \eqref{decl:proj:nonlag}, and \eqref{decl:proj:lag} are linear and can be solved independently of $\mathcal{Q}^{n+1}$. Once the variables $\hat{\bu}_1$, $\hat{\bu}_2$, $\bu_1$, $\bu_2$, $p_1$, and $p_2$ are computed, the value of $\mathcal{Q}^{n+1}$ is determined by solving the linear equation derived from \eqref{decom:sav:grad}:
	\begin{equation}\label{impl:bdf1:sav}
		\begin{aligned}
			&\textstyle\frac{\Vert\bu_1\Vert^2+2\mathcal{Q}^{n+1}(\bu_1,\bu_2)+(\mathcal{Q}^{n+1})^2\Vert\bu_2\Vert^2-\Vert\bu^n\Vert^2+\tau^2\Vert\nabla p_1\Vert^2+2\tau^2\mathcal{Q}^{n+1}(\nabla p_1,\nabla p_2)+\tau^2(\mathcal{Q}^{n+1})^2\Vert\nabla p_2\Vert^2-\tau^2\Vert\nabla p^n\Vert^2}{2\tau}\\
			\textstyle&\qquad\qquad\qquad\qquad +\theta\frac{(\mathcal{Q}^{n+1})^2-(\mathcal{Q}^n)^2}{\tau}=-\nu\Vert\nabla\hat{\bu}_1\Vert^2-2\nu\mathcal{Q}^{n+1}(\nabla\hat{\bu}_1,\nabla\hat{\bu}_2)-(\mathcal{Q}^{n+1})^2\nu\Vert\nabla\hat{\bu}_2\Vert^2.
		\end{aligned}
	\end{equation}
	Rearrange and multiplying $2\tau$ on both sides, it holds
	\begin{equation*}
		\begin{aligned}
			\textstyle&(\mathcal{Q}^{n+1})^2\Vert\bu_2\Vert^2+2\mathcal{Q}^{n+1}(\bu_1,\bu_2)+\Vert\bu_1\Vert^2-\Vert\bu^n\Vert^2+2\theta(\mathcal{Q}^{n+1})^2-2\theta(\mathcal{Q}^n)^2+\tau^2\Vert\nabla p_1\Vert^2-\tau^2\Vert\nabla p^n\Vert^2\\
			\textstyle&\qquad\qquad+2\tau^2\mathcal{Q}^{n+1}(\nabla p_1,\nabla p_2)+\tau^2(\mathcal{Q}^{n+1})^2\Vert\nabla p_2\Vert^2+2\nu\tau\Vert\nabla\hat{\bu}_1\Vert^2+4\nu\tau\mathcal{Q}^{n+1}(\nabla\hat{\bu}_1,\nabla\hat{\bu}_2)\\
			\textstyle&\qquad\qquad+2(\mathcal{Q}^{n+1})^2\nu\tau\Vert\nabla\hat{\bu}_2\Vert^2=0,
		\end{aligned}
	\end{equation*}
	It can be further simplified as 
	\vspace{-2ex}
	\begin{equation*}
		\textstyle\mathbf{A}^{n+1}(\mathcal{Q}^{n+1})^2+\mathbf{B}^{n+1}\mathcal{Q}^{n+1}+\mathbf{C}^{n+1}=0,
	\end{equation*}
	where 
	\begin{equation*}
		\left\{
		\begin{aligned}
			\textstyle&\mathbf{A}^{n+1}=\Vert\bu_2\Vert^2+2\theta+\tau^2\Vert\nabla p_2\Vert^2+2\tau\nu \Vert\nabla \hat{\bu}_2\Vert^2,\\ \textstyle&\mathbf{B}^{n+1}=2(\bu_1,\bu_2)+2\tau^2(\nabla p_1,\nabla p_2)+4\nu\tau(\nabla\hat{\bu}_1,\nabla\hat{\bu}_2),\\
			\textstyle&\mathbf{C}^{n+1}=\Vert\bu_1\Vert^2-\Vert\bu^n\Vert^2+\tau^2\Vert\nabla p_1\Vert^2-\tau^2\Vert\nabla p^n\Vert^2-2\theta(\mathcal{Q}^{n})^2+2\tau\nu\Vert\nabla\hat{\bu}_1\Vert^2.
		\end{aligned}
		\right.
	\end{equation*}
	{\color{magenta}
		Taking the inner product of \eqref{decl:nonlag} with $2\tau\hat{\bu}_{1}$, we derive
		\begin{equation*}
			\textstyle	\Vert\hat{\bu}_1\Vert^2-\Vert\bu^n\Vert^2+\Vert\hat{\bu}_1-\bu^n\Vert^2+2\tau(\nabla p^n,\hat{\bu}_1)+2\nu\tau\Vert\nabla\hat{\bu}_1\Vert^2=0.
		\end{equation*}
	By substituding the equation into $\mathbf{C}^{n+1}$, it yields
	\begin{equation*}
		\textstyle\mathbf{C}^{n+1}=\textstyle\Vert\bu_1\Vert^2-\Vert\hat{\bu}_1\Vert^2-\Vert\hat{\bu}_1-\bu^n\Vert^2-2\tau(\nabla p^n,\hat{\bu}_1)+\tau^2\Vert\nabla p_1\Vert^2-\tau^2\Vert\nabla p^n\Vert^2-2\theta(\mathcal{Q}^{n})^2.
	\end{equation*}
		Next, taking the inner product of the first equation in \eqref{decl:proj:nonlag} with $\hat{\bu}_1$ and $\nabla p_1$, respectively, we obtain
		\begin{equation*}
			\left\{
			\begin{aligned}
				\Vert\bu_1\Vert^2-\Vert\hat{\bu}_1\Vert^2-2\tau(\nabla p^n,\hat{\bu}_1)&=\Vert\bu_1-\hat{\bu}_1\Vert^2-2\tau(\nabla p_1,\hat{\bu}_1),\\
				-2\tau(\hat{\bu}_1,\nabla p_1)+\tau^2\Vert\nabla p_1\Vert^2-\tau^2\Vert\nabla p^n\Vert^2&=-\tau^2\Vert\nabla(p_1-p^n)\Vert^2.
			\end{aligned}
			\right.
		\end{equation*}
		Then, $ \mathbf{C}^{n+1} = \textstyle
		\Vert\bu_1-\hat{\bu}_1\Vert^2-\Vert\hat{\bu}_1-\bu^n\Vert^2-\tau^2\Vert\nabla(p_1-p^n)\Vert^2-2\theta(\mathcal{Q}^{n})^2$.
		Moving the second and third terms of \eqref{proj:nonlag:sys} to the right-hand side and squaring both sides yields
		\begin{equation*}
			\Vert\bu_1-\hat{\bu}_1\Vert^2=\tau^2\Vert\nabla(p_1-p^n)\Vert^2.
		\end{equation*}
		Eventually, $\textstyle\mathbf{C}^{n+1}= \textstyle -\Vert\hat{\bu}_1-\bu^n\Vert^2 -2\theta(\mathcal{Q}^{n})^2<0.$
	}
	We remark that $\mathbf{A}^{n+1}>0$ and $\mathbf{C}^{n+1}<0$ for any $ \theta > 0 $. Consequently, the quadratic equation in $\mathcal{Q}$ admits a unique positive solution for any positive regularization parameter $\theta$. Once $\mathcal{Q}^{n+1}$ is determined, the updated values $\bu^{n+1}$ and $p^{n+1}$ can be recovered explicitly through the relations given in \eqref{impl}.
\end{proof}
{\color{magenta}
	\vspace{-2ex}
	\begin{remark}
		If we choose $\frac{\Vert\hat{\bm u}^{n+1} \Vert^2 - \Vert \bm u^n\Vert^2}{2\tau}$ instead of $ (\frac{\hat{\bm u}^{n+1}-\bm u^n}{\tau},\hat{\bm u}^{n+1})$in Eq. \eqref{bdf1:sav:grad}, the coefficients $A$ and $C$ are
		\begin{equation*}
			\left\{
			\begin{aligned}
				\textstyle&\mathbf{A}^{n+1}=\Vert\bu_2\Vert^2+\Vert \hat{\bu}_2 \Vert^2+2\theta+\tau^2\Vert\nabla p_2\Vert^2+2\tau\nu \Vert\nabla \hat{\bu}_2\Vert^2>0,\\ 
				\textstyle&\mathbf{C}^{n+1}=~-2\theta(\mathcal{Q}^{n})^2<0.
			\end{aligned}
			\right.
		\end{equation*}
		Hence, $ \mathcal{Q}^{n+1}$ is still unconditionally solvable.
	\end{remark}
}
Next, we show that the Lagrange multiplier in the DRLM scheme \eqref{discretization:1or} is uniformly bounded for any time step size $\tau > 0$.
\begin{lem}\label{Q:bound}
	Let $\left\{\mathcal{Q}^n\right\}_{n=0}^N$ be the positive sequence generated by the DRLM scheme \eqref{discretization:1or}. There exists a constant $C_0\geq1$ and $\tilde{M}$ , depending only on $T,~ \Vert\bu^0\Vert$ , such that $\mathcal{Q}^n\leq C_0,~ \Vert\bu^n\Vert\leq\tilde{M},~ \forall n=0,1,\ldots,N$.
\end{lem}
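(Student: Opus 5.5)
The bound comes from the discrete energy law (the identity in the proof of Lemma \ref{energy:stab}), telescoped over time steps. With a nonzero forcing $\bff$, testing \eqref{bdf1:mom:grad} with $\hat{\bu}^{n+1}$ and combining with \eqref{bdf1:sav:grad} gives
\begin{equation*}
\textstyle \mathcal{K}(\bu^{n+1},p^{n+1})-\mathcal{K}(\bu^n,p^n)+\theta\big[(\mathcal{Q}^{n+1})^2-(\mathcal{Q}^n)^2\big]=-\nu\tau\Vert\nabla\hat{\bu}^{n+1}\Vert^2+\tau(\bff^{n+1},\hat{\bu}^{n+1}).
\end{equation*}
The convection term drops out exactly, because the multiplier equation was built to cancel it. This is the key structural point: no bound on $\mathcal{N}(\bu^n)\bu^n$ is ever needed. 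When $\bff=0$, Remark \ref{energy:bound} already gives
\begin{equation*}
\theta(\mathcal{Q}^n)^2+\tfrac12\Vert\bu^n\Vert^2\le \mathcal{K}(\bu^0,p^0)+\theta,
\end{equation*}
so one can take
\begin{equation*}
\textstyle C_0=\max\{1,\sqrt{1+\mathcal{K}(\bu^0,p^0)/\theta}\},\qquad \tilde M=\sqrt{2(\mathcal{K}(\bu^0,p^0)+\theta)}.
\end{equation*}
I will present the forced case, since it is the one used later in the error analysis.

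In the forced case I handle the forcing term with Poincar\'e and Young:
\begin{equation*}
\tau(\bff^{n+1},\hat{\bu}^{n+1})\le \nu\tau\Vert\nabla\hat{\bu}^{n+1}\Vert^2+\frac{C_P^2\tau}{4\nu}\Vert\bff^{n+1}\Vert^2 .
\end{equation*}
This is legitimate because $\hat{\bu}^{n+1}\in\boldsymbol{H}^1_0$ carries the Dirichlet condition. Summing from $0$ to $n-1$ then yields
\begin{equation*}
\textstyle \mathcal{K}(\bu^n,p^n)+\theta(\mathcal{Q}^n)^2\le \mathcal{K}(\bu^0,p^0)+\theta+\frac{C_P^2T}{4\nu}\Vert\bff\Vert^2_{L^\infty(0,T;L^2)}=:M_0 .
\end{equation*}
No discrete Gronwall argument is needed, since the forcing is absorbed entirely by the dissipation. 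Because $\mathcal{Q}^n>0$ by Lemma \ref{unique solution}, we get $\mathcal{Q}^n\le\sqrt{M_0/\theta}$ and $\Vert\bu^n\Vert\le\sqrt{2M_0}$. I then set $C_0=\max\{1,\sqrt{M_0/\theta}\}$ and $\tilde M=\sqrt{2M_0}$.

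The only delicate point is bookkeeping of what the constants depend on. They depend on $T$ and on $\mathcal{K}(\bu^0,p^0)$, which involves $\Vert\bu^0\Vert$ and $\tau\Vert\nabla p^0\Vert$. The latter is controlled by choosing $p^0$ from the exact initial pressure, giving a $\tau$-independent bound. They also depend on the fixed data $\theta$, $\nu$ and $\bff$, which the statement treats as given parameters. I expect this dependence-tracking, rather than any estimate, to be the main (mild) obstacle. The essential requirement is only that all constants are independent of $\tau$ and $n$, which the telescoped energy identity guarantees.
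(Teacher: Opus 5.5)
Your proposal is correct and takes the same route as the paper. You test \eqref{bdf1:mom:grad} with $\hat{\bu}^{n+1}$, combine it with \eqref{bdf1:sav:grad} so the convection term cancels, and telescope, which in the unforced case reproduces exactly the paper's constants $C_0=\sqrt{1+(\Vert\bu^0\Vert^2+\tau^2\Vert\nabla p^0\Vert^2)/(2\theta)}$ and $\tilde M=\sqrt{\Vert\bu^0\Vert^2+\tau^2\Vert\nabla p^0\Vert^2+2\theta}$. Your Poincar\'e--Young absorption of $\tau(\bff^{n+1},\hat{\bu}^{n+1})$ into the viscous dissipation is a valid extension that the paper's proof, which silently drops $\bff$, does not carry out, and your remark that the constants also depend on $\theta$ and $\tau\Vert\nabla p^0\Vert$ applies equally to the paper's own constants.
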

\begin{proof}
	Taking the $L^2$ inner product of \eqref{bdf1:mom:grad} with $\hat{\bu}^{n+1}$ yields
	\begin{equation}\label{inner:bdf1:mom}
		\begin{aligned}
			&\textstyle(\frac{\hat{\bm u}^{n+1}-\bm u^n}{\tau},\hat{\bu}^{n+1})+\mathcal{Q}^{n+1}(\mathcal{N}(\bm u^n)\bm u^n,\hat{\bu}^{n+1})+\mathcal{Q}^{n+1}(\nabla p^n,\hat{\bu}^{n+1})+\nu\Vert\hat{\bu}^{n+1}\Vert^2=0,
		\end{aligned}
	\end{equation}
	From \eqref{bdf1:sav:grad} and \eqref{inner:bdf1:mom}, we obtain
	\begin{equation}\label{ene:sup:bound}
		\textstyle\frac{\Vert\bu^{n+1}\Vert^2-\Vert\bu^n\Vert^2+\tau^2\Vert\nabla p^{n+1}\Vert^2-\tau^2\Vert\nabla p\Vert^2}{2\tau}+\theta\frac{(\mathcal{Q}^{n+1})^2-(\mathcal{Q}^n)^2}{\tau}=-\nu\Vert\hat{\bu}^{n+1}\Vert^2\leq0,
	\end{equation}
	Summing \eqref{ene:sup:bound} over $n=0, 1, \cdots, N$, noting that $\mathcal{Q}^0=1$, giving us
	\begin{equation*}
		\begin{aligned}
			\textstyle&\mathcal{Q}^{n+1}\leq\sqrt{1+\frac{\Vert\bu^0\Vert^2+\tau^2\Vert\nabla p^0\Vert^2}{2\theta}}= C_0,\\
			\textstyle&\Vert\bu^{n+1}\Vert\leq\sqrt{\Vert\bu^0\Vert^2+\tau^2\Vert\nabla p^0\Vert^2+2\theta}=\tilde{M}.
		\end{aligned}
	\end{equation*}
	The proof of the lemma is thus complete.
\end{proof}
%%%%%%%%%%%%%%%%%%%%%%%%%%%%%%%%%%%%%%%%%%
\section{Temporal error estimate}\label{errest}
%%%%%%%%%%%%%%%%%%%%%%%%%%%%%%%%%%%%%%%%%%
Let $ {\bu^n, p^n, \mathcal{Q}^n}$ be the numerical solution at time $t_n$ obtained by  P-DRLM1 scheme \eqref{discretization:1or}, we define the errors of the velocity, pressure, and Lagrange multiplier as follows:
\begin{equation*}
	e_{\bu}^n=\bu(t_n)-\bu^n,\quad e_p^n=p(t_n)-p^n,\quad e_Q^n=1-\mathcal{Q}^n.
\end{equation*}
In particular, we have $e_{\bu}^0=0$ and $e_Q^0=0$. Before stating the main results of this work, let us first recall some useful inequalities.
\subsection{Preliminaries}
Due to the imposed homogeneous Dirichlet boundary conditions, the velocity $\boldsymbol{u}\in \boldsymbol{H}_0^1(\Omega)$. For the trace-free $\boldsymbol{H}^1$ functions, we have the 
$\text{Poincar\'{e}}$ inequality \cite{evans2022partial}, that is, for some positive constant $C_1$ depending only on the domain $\Omega$, there exists
\begin{equation}
	\Vert\boldsymbol{v}\Vert_1\leq(1+C_1)\Vert\nabla\boldsymbol{v}\Vert,\quad\forall\boldsymbol{v}\in\boldsymbol{H}_0^1(\Omega).
\end{equation}

For $\bu,\bv,\bw\in H_0^1(\Omega)$, we define the trilinear form $b(\cdot,\cdot,\cdot)$ by
\begin{equation*}
	b(\bu,\bv,\bw)=\int_\Omega\left((\bu\cdot\nabla)\bv\right)\cdot \bw dx.
\end{equation*}
It can be shown that $b(\cdot,\cdot,\cdot)$ is skew-symmetric with respect to its last two arguments, i.e.,
\begin{equation}\label{skew-symmetric}
	b(\boldsymbol{u},\boldsymbol{v},\boldsymbol{w})=-b(\boldsymbol{u},\boldsymbol{w},\boldsymbol{v}),\quad\forall\boldsymbol{u}\in\boldsymbol{V},\forall\boldsymbol{v},\boldsymbol{w}\in\boldsymbol{H}_0^1(\Omega).
\end{equation}
Hence, by setting $\bw = \bv$ in \eqref{skew-symmetric}, we have $b(\bu,\bv,\bv)=0,\forall\bv\in\boldsymbol{V},\forall\bv\in\boldsymbol{H}_0^1(\Omega)$. Furthermore, for $d \leq 4$, there exists a constant $C_2$ depending on $\Omega$ such that
\begin{equation}\label{skew-C2}
	|b(\boldsymbol{u},\boldsymbol{v},\boldsymbol{w})|\leq C_{2}
	\begin{cases}
		\|\boldsymbol{u}\|_{1}\|\boldsymbol{v}\|_{1}\|\boldsymbol{w}\|_{1}, \\
		\|\boldsymbol{u}\|_{0}\|\boldsymbol{v}\|_{2}\|\boldsymbol{w}\|_{1}, \\
		\|\boldsymbol{u}\|_{1}\|\boldsymbol{v}\|_{2}\|\boldsymbol{w}\|_{0}, \\
		\|\boldsymbol{u}\|_{0}\|\boldsymbol{v}\|_{1}\|\boldsymbol{w}\|_{2}, \\
		\|\boldsymbol{u}\|_{2}\|\boldsymbol{v}\|_{1}\|\boldsymbol{w}\|_{0}. &  
	\end{cases}
\end{equation}
The above estimates are widely utilized and can be obtained by the H\"older's inequality and Sobolev embedding theorems \cite{evans2022partial}. The detailed proof can be found in \cite{He2003,HeLiu2005,HeywoodRannacher1982}.

Lastly, we need the following discrete version of the Gr\"onwall inequality (see, for example, \cite{Diegel2017,HeSun2007,QuarteroniValli2008}) to deal with the temporal discretization. Its content is stated below.
\begin{lem}\label{discrete Gronwall}
	Let $\{a_n\}_{n=0}^K, \{b_n\}_{n=0}^K, \{c_n\}_{n=0}^K$ be nonnegative sequences with $\omega_0a_0+c_1\leq c_1e^{\omega_0}$. If ${c_n}$ is non-decreasing and
	\begin{equation*}
		a_n+b_n\leq c_n+\sum_{i=0}^{n-1}\omega_ia_i,\quad\forall\mathrm{~}1\leq n\leq K,
	\end{equation*}
	then we have
	\begin{equation}\label{gronwall:exp}
		a_{n}+b_{n}\leq c_{n}\exp\left(\sum_{i=0}^{n-1}\omega_{i}\right),\quad\forall\mathrm{~}1\leq n\leq K.
	\end{equation}
\end{lem}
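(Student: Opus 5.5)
The plan is induction on $n$, i.e. the standard proof of a discrete Gr\"onwall lemma. Write $S_n=\sum_{i=0}^{n-1}\omega_ia_i$, so the hypothesis reads $a_n+b_n\le c_n+S_n$. The claim to propagate is $a_m+b_m\le c_m\exp\big(\sum_{i=0}^{m-1}\omega_i\big)$ for all $1\le m\le n$. We then show it holds at $n+1$.

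For the base case $n=1$ the hypothesis gives $a_1+b_1\le c_1+\omega_0a_0$. The assumption $\omega_0a_0+c_1\le c_1e^{\omega_0}$ is exactly the required bound $c_1\exp(\omega_0)$. (Nonnegativity of $\omega_i$ is implicitly needed throughout; I would make it explicit, as in the cited references.)

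For the inductive step, set $\Lambda_m=\exp\big(\sum_{i=0}^{m-1}\omega_i\big)$ with $\Lambda_0=1$. By the induction hypothesis $a_i\le c_i\Lambda_i\le c_n\Lambda_i$ for $1\le i\le n$, using that $b_i\ge0$ and that $c$ is non-decreasing. Separate the $i=0$ term and use the base assumption $\omega_0a_0\le c_1(e^{\omega_0}-1)\le c_{n+1}(\Lambda_1-1)$. This gives
\[
a_{n+1}+b_{n+1}\le c_{n+1}\Big(\Lambda_1+\sum_{i=1}^{n}\omega_i\Lambda_i\Big).
\]
The key elementary inequality is $\omega_i\Lambda_i\le(e^{\omega_i}-1)\Lambda_i=\Lambda_{i+1}-\Lambda_i$, which follows from $1+x\le e^x$. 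The sum therefore telescopes to at most $\Lambda_{n+1}-\Lambda_1$, and the bracket is bounded by $\Lambda_{n+1}$. This closes the induction.

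The only delicate point is handling the $i=0$ term, since $a_0$ is not covered by the inductive bound (the hypothesis applies only for $n\ge1$). This is precisely why the extra hypothesis $\omega_0a_0+c_1\le c_1e^{\omega_0}$ is imposed, and the estimate must be arranged so that this term contributes $\Lambda_1-1$ to the telescoping sum. Monotonicity of $c_n$ is what allows every $c_i$, including $c_1$ in that term, to be replaced by $c_{n+1}$.
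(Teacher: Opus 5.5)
Your induction is correct and complete, and the paper gives no proof of this lemma to compare with: it only cites the literature. The base case is exactly the hypothesis $\omega_0a_0+c_1\le c_1e^{\omega_0}$. The bound $a_i\le c_i\Lambda_i\le c_{n+1}\Lambda_i$ correctly uses $b_i\ge0$ and the monotonicity of $c$. The inequality $\omega_i\Lambda_i\le\Lambda_{i+1}-\Lambda_i$ makes the sum telescope to $\Lambda_{n+1}-\Lambda_1$. Adding the leading $c_{n+1}$ and the $i=0$ contribution $c_{n+1}(\Lambda_1-1)$ then gives exactly $c_{n+1}\Lambda_{n+1}$.

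The usual textbook argument is a slight variant of yours:
\begin{itemize}
\item Fix the target index $n$ and set $y_m=c_n+\sum_{i=0}^{m-1}\omega_ia_i$ for $1\le m\le n$.
\item Since $c_m\le c_n$, you have $a_m\le a_m+b_m\le y_m$, so $y_{m+1}=y_m+\omega_ma_m\le(1+\omega_m)y_m\le e^{\omega_m}y_m$.
\item Iterate down to $y_1=c_n+\omega_0a_0\le c_n+c_1(e^{\omega_0}-1)\le c_ne^{\omega_0}$, and conclude from $a_n+b_n\le y_n$.
\end{itemize}
That route replaces your strong induction and telescoping by a one-step recursion. It rests on the same facts, $1+x\le e^x$ and the monotonicity of $c$, so neither argument is more general than the other.

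Your remark that $\omega_i\ge0$ is needed is not a formality: as printed in the paper, the lemma is false without it. Take $K=2$, $\omega_0=0$, $\omega_1=-1$, $a_0=a_1=b_0=b_1=b_2=0$, $a_2=1$, and $c_0=c_1=c_2=1$. Then the following all hold:
\begin{itemize}
\item $\omega_0a_0+c_1\le c_1e^{\omega_0}$;
\item $a_1+b_1=0\le c_1$;
\item $a_2+b_2=1\le c_2+\omega_0a_0+\omega_1a_1=1$.
\end{itemize}
Yet $a_2+b_2=1>e^{-1}=c_2\exp(\omega_0+\omega_1)$.

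In your proof the sign condition enters twice: in $c_1(e^{\omega_0}-1)\le c_{n+1}(e^{\omega_0}-1)$, and in $\omega_ia_i\le\omega_ic_{n+1}\Lambda_i$. It should therefore be stated as a hypothesis. In the paper's applications the $\omega_i$ are positive multiples of $\tau$, so nothing downstream is affected.
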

%The condition $\omega_0a_0 +c_1 \leq c_1e^{\omega_0}$ in Lemma \ref{discrete Gronwall} is needed to ensure \eqref{gronwall:exp} holds for $n = 1$. This condition is satisfied if $a_0 \leq c_1$.
\subsection{Optimal error estimates for the velocity and Lagrange multiplier}
For the subsequent analysis, we require the solution $(\bu, p)$ of the NS equations \eqref{NS:orig} to satisfy the following regularity conditions, which are also required in \cite{Li2022MOC}:
\begin{equation}\label{regu}
	\begin{aligned}
		\textstyle& \bu\in H^3(0,T;\boldsymbol{L}^2(\Omega))\cap H^1(0,T;\boldsymbol{H}^2(\Omega))\cap W^{1,\infty}(0,T;\boldsymbol{W}^{1,\infty}(\Omega)), \\
		\textstyle& p\in H^2(0,T;\boldsymbol{H}^1(\Omega)/\mathbb{R}).
	\end{aligned}
\end{equation}
Let $M$ be a constant such that
\begin{equation}\label{maxM}
	\max\left\{\sup_{t\in[0,T]}\|\boldsymbol{u}\|_2,\mathrm{~}\int_0^T\|\boldsymbol{u}_t\|_1^2dt,\mathrm{~}\int_0^T\|\boldsymbol{u}_{tt}\|_{-1}^2dt,\tilde{M}\right\}\leq M.
\end{equation}
\begin{lem}\label{velocity estimate1}
	Let $\{\bu^1\}$, $\{p^1\}$, and $\{Q^1\}$ be generated by the first-order DRLM scheme \eqref{discretization:1or}. There exists $\tau_0>0$. Under the regularity assumptions \eqref{regu}, there exist positive constants $\tau$, and $C^\star$ depending on $\Omega$, $T$, $\theta$, $\nu$, $M$ and $\bu^0$ but independent of $\tau$ and $n$ such that the following error estimates hold for all $\tau < \tau_0$:
	\begin{equation}\label{u:Q:err:1}
		\textstyle\Vert e_{\bu}^{1}\Vert^2+|e_Q^{1}|^2+\tau^2\Vert\nabla e_p^{1}\Vert^2+\nu\tau\Vert\nabla\hat{e}_{\bu}^{1}\Vert^2\leq C^\star\tau^2.
	\end{equation}
\end{lem}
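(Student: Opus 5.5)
The estimate \eqref{u:Q:err:1} concerns a single step from the exact data. We have $e_{\bu}^0=0$ and $e_Q^0=0$, and we take $p^0=p(t_0)$, so $e_p^0=0$ as well. The plan is a one-step energy argument: write the error equations for $n=0$, test them appropriately, and absorb the single nonlinear term involving $e_Q^1$ with the help of the regularization $\theta$.

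\textbf{Error equations.} Set $\hat e_{\bu}^1=\bu(t_1)-\hat\bu^1$. Evaluating \eqref{NS:orig:mom} at $t_1$ and subtracting \eqref{bdf1:mom:grad} with $n=0$ gives
\begin{equation*}
\textstyle\frac{\hat e_{\bu}^1-e_{\bu}^0}{\tau}-\nu\Delta\hat e_{\bu}^1+\nabla(p(t_1)-p^0)=\bR^1+\mathcal{Q}^1\mathcal{N}(\bu^0)\bu^0-\mathcal{N}(\bu(t_1))\bu(t_1).
\end{equation*}
Here $\bR^1=\frac{\bu(t_1)-\bu(t_0)}{\tau}-\bu_t(t_1)$ is the truncation error. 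Since $e_p^0=0$, we have $\nabla(p(t_1)-p^0)=\nabla(p(t_1)-p(t_0))=O(\tau)$ by \eqref{regu}. Because $\bu^0=\bu(t_0)$, the nonlinear difference splits as
\begin{equation*}
-e_Q^1\,\mathcal{N}(\bu(t_0))\bu(t_0)+\big[\mathcal{N}(\bu(t_0))\bu(t_0)-\mathcal{N}(\bu(t_1))\bu(t_1)\big].
\end{equation*}
The bracket is $O(\tau)$ in $L^2$ by \eqref{skew-C2} and $\bu\in W^{1,\infty}(0,T;\bm W^{1,\infty})$. The projection error equation reads
\begin{equation*}
\textstyle\frac{e_{\bu}^1-\hat e_{\bu}^1}{\tau}+\nabla(e_p^1-e_p^0)=\nabla(p(t_1)-p(t_0)),
\end{equation*}
together with $\nabla\cdot e_{\bu}^1=0$.

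\textbf{Velocity and pressure energy.} Test the momentum error equation with $2\tau\hat e_{\bu}^1$. Mirroring the derivation of \eqref{prj:energy}, test the projection error equation with $2\tau e_{\bu}^1$ and with $2\tau^2\nabla e_p^1$. Adding, the $\hat e_{\bu}$–pressure cross terms telescope, and we obtain
\begin{equation*}
\|e_{\bu}^1\|^2+\tau^2\|\nabla e_p^1\|^2+2\nu\tau\|\nabla\hat e_{\bu}^1\|^2\le C\tau^3+C\tau|e_Q^1|\,\|\hat e_{\bu}^1\|+\tfrac{\nu\tau}{2}\|\nabla \hat e_{\bu}^1\|^2 .
\end{equation*}
The $O(\tau)$ forcing terms contribute $C\tau^3$ after Young's inequality and Poincar\'e. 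Note that $b(\bu_0,\bu_0,\hat e_{\bu}^1)$ does not vanish, so the term $\tau|e_Q^1|\,\|\hat e_{\bu}^1\|$ must be controlled separately.

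\textbf{Multiplier error.} Subtract from \eqref{bdf1:sav:grad} the exact identity obtained by testing the continuous equations against $\bu(t_1)$, which holds with $Q\equiv1$ and consistency error $O(\tau)$. Use $\theta\big[(Q^1)^2-1\big]=-\theta e_Q^1(2-e_Q^1)$. Since $Q^1\le C_0$ by Lemma \ref{Q:bound} and $Q^1>0$ by Lemma \ref{unique solution}, the factor $2-e_Q^1=1+Q^1\ge1$. This yields
\begin{equation*}
\theta|e_Q^1|\le C\tau+C\big(\|e_{\bu}^1\|+\tau\|\nabla e_p^1\|+\|\hat e_{\bu}^1\|_1\big).
\end{equation*}
The right-hand side uses the boundedness of $\bu^1$ and $\tau\nabla p^1$ from Lemma \ref{Q:bound}, and the bilinear structure of $\mathcal K$: for example, $\|\bu(t_1)\|^2-\|\bu^1\|^2=(e_{\bu}^1,\bu(t_1)+\bu^1)$.

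\textbf{Main obstacle.} The multiplier bound only controls $e_Q^1$ by first powers of the error, not by squares. Feeding it back into the energy inequality gives terms such as $\tau\|\hat e_{\bu}^1\|_1^2$ and $\tau\|e_{\bu}^1\|^2$. The first must be absorbed into $\nu\tau\|\nabla\hat e_{\bu}^1\|^2$, which is the delicate point; the second is absorbed on the left for $\tau<\tau_0$. To close the argument I would first obtain the crude bound $\|\hat e_{\bu}^1\|_1\le C$, which follows from the $H^1$ stability of the single elliptic step \eqref{decl}. Then I would rewrite the $e_Q^1$ contribution as $\tau|e_Q^1|\,\|\hat e_{\bu}^1\|\le \tau\theta^{-1}\big(C\tau+\cdots\big)\|\hat e_{\bu}^1\|$. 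After Young's inequality every remaining piece is either absorbable for $\tau<\tau_0$ or bounded by $C\tau^2$, with $C$ depending on $\theta,\nu,M$. Squaring the multiplier bound then gives $|e_Q^1|^2\le C^\star\tau^2$, which completes \eqref{u:Q:err:1}.
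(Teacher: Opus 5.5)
Your route for the multiplier (subtract the exact energy identity from \eqref{bdf1:sav:grad}, then use $2-e_Q^1=1+\mathcal{Q}^1\ge 1$ to get a \emph{linear} bound on $|e_Q^1|$) is legitimate. However, the final step, ``squaring the multiplier bound gives $|e_Q^1|^2\le C^\star\tau^2$,'' does not follow from what you have. Your multiplier bound contains $\Vert\hat e_{\bu}^1\Vert_1$ with no factor $\tau$. Squaring it therefore requires $\Vert\nabla\hat e_{\bu}^1\Vert^2\le C\tau^2$.

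The closure you describe cannot supply this. With the crude bound $\Vert\hat e_{\bu}^1\Vert_1\le C$, the piece of $C\tau|e_Q^1|\,\Vert\hat e_{\bu}^1\Vert$ coming from $\Vert\hat e_{\bu}^1\Vert_1$ becomes $C\tau\Vert\hat e_{\bu}^1\Vert\le\frac12\Vert\hat e_{\bu}^1\Vert^2+C\tau^2$. So the energy inequality closes only at order $\tau^2$, i.e.\ $\nu\tau\Vert\nabla\hat e_{\bu}^1\Vert^2\le C\tau^2$. Hence $\Vert\hat e_{\bu}^1\Vert_1^2\le C\tau/\nu$, and squaring your multiplier bound gives only $|e_Q^1|^2\le C\tau$. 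That is a full power of $\tau$ short. The problem is structural: \eqref{u:Q:err:1} controls $\nabla\hat e_{\bu}^1$ only with weight $\tau$, so any multiplier bound containing $\Vert\hat e_{\bu}^1\Vert_1$ without a factor $\tau$ is too weak.

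The repair is bookkeeping. Multiply the subtracted identity by $\tau$ and look at each term:
the $\mathcal{K}$-difference is $O(\Vert e_{\bu}^1\Vert+\tau\Vert\nabla e_p^1\Vert)$;
the time-difference terms combine to $-(\hat e_{\bu}^1,2\bu(t_1)-\bu(t_0))+\Vert\hat e_{\bu}^1\Vert^2$;
the pressure terms give $-\tau(\nabla p(t_0),\hat e_{\bu}^1)$;
$\hat e_{\bu}^1$ enters a trilinear form only through $\tau\, b(\bu(t_0),\bu(t_0),\hat e_{\bu}^1)\le C_2M^2\tau\Vert\hat e_{\bu}^1\Vert$, by the third bound in \eqref{skew-C2}.
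Up to an absorbable $C\tau|e_Q^1|$, this gives $\theta|e_Q^1|\le C\tau^2+C(\Vert e_{\bu}^1\Vert+\tau\Vert\nabla e_p^1\Vert+\Vert\hat e_{\bu}^1\Vert)$. This squares to $C\tau^2$ once the $L^2$ energy estimate is closed. Keep $\Vert\hat e_{\bu}^1\Vert^2$ on the left for this; the momentum test supplies $2\Vert\hat e_{\bu}^1\Vert^2$ since $e_{\bu}^0=0$.

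Alternatively, keep your bound but drop the crude estimate. Absorb $\tau\Vert\hat e_{\bu}^1\Vert_1\Vert\hat e_{\bu}^1\Vert\le\frac{\nu\tau}{4}\Vert\nabla\hat e_{\bu}^1\Vert^2+C\tau\Vert\hat e_{\bu}^1\Vert^2$. Everything then closes at order $\tau^3$, which does give $\Vert\nabla\hat e_{\bu}^1\Vert^2\le C\tau^2$.

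A smaller point: testing the projection error equation with $2\tau^2\nabla e_p^1$ is vacuous here, and nothing telescopes. The control of $\tau^2\Vert\nabla e_p^1\Vert^2$ comes from $\Vert e_{\bu}^1-\hat e_{\bu}^1\Vert^2=\tau^2\Vert\nabla(e_p^1-p(t_1)+p(t_0))\Vert^2$ in the $2\tau e_{\bu}^1$ test.

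For comparison, the paper never subtracts the exact energy law. Combining \eqref{bdf1:sav:grad} with the projection step gives the exact identity \eqref{Q:err:orig:1}, in which all $\mathcal{K}$- and pressure terms cancel. Testing it with $e_Q^1$ yields a quadratic bound whose only $H^1$ term is $\frac{\nu}{8}\tau\Vert\nabla\hat e_{\bu}^1\Vert^2$, absorbed by the velocity estimate. The price is the restriction $\theta\ge 2(1+C_0)$. Your repaired route needs no restriction on $\theta$, but it relies on the bounds on $\Vert\bu^1\Vert$ and $\tau\Vert\nabla p^1\Vert$ behind Lemma \ref{Q:bound}.
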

\begin{proof}
	To derive optimal error estimates for both the velocity and the Lagrange multiplier at $t=t_1$, we proceed through the following steps.
	
	\textbf{Step 1: Estimates of velocity. } Subtracting the momentum equation \eqref{NS:orig:mom} at $t_1$ from \eqref{bdf1:mom:grad}, we derive the error equations for velocity:
	\begin{subequations}\label{error:proj:1}
		\begin{empheq}[left=\empheqlbrace]{align}
			&\textstyle\frac{\hat{e}_{\bu}^{1}-e_{\bu}^0}{\tau}-\nu\Delta\hat{e}_{\bu}^{1}=-\mathcal{Q}(t_{1})\mathcal{N}(\bu(t_{1}))\bu(t_{1})+\mathcal{Q}^{1}\mathcal{N}(\bm u^0)\bm u^0\nonumber\\
			&\textstyle\qquad\qquad\qquad\qquad\qquad-\nabla(p(t_{1})-p^0)-\boldsymbol{R}_{\bu}^{1}, \label{err:mom:grad:1}\\ 
			&\textstyle\frac{e_{\bu}^{1}-\hat{e}_{\bu}^{1}}{\tau}-\nabla(p^{1}-p^0)=0,\label{err:proj:grad:1}
		\end{empheq} 
	\end{subequations}
	where the truncation error $\bR_{u}^{1}$ is given by
	\begin{equation}\label{trunc:velo:1}
		\bR_{u}^{1} = \bu_{t}(t_{1})-\frac{\bu(t_{1})-\bu(t_0)}{\tau}=\frac{1}{\tau}\int_{t_{0}}^{t_{1}} (t-t_0)u_{tt}(t) \, dt.
	\end{equation}
	Taking the inner product of \eqref{err:mom:grad:1} with $\hat{e}_{\bu}^{1}$, we obtain
	\begin{equation}\label{err:mom:L2:1}
		\begin{aligned}
			&\frac{\Vert\hat{e}_{\bu}^{1}\Vert^2-\Vert e_{\bu}^0\Vert^2}{2\tau}+\frac{\Vert\hat{e}_{\bu}^{1}-e_{\bu}^0\Vert^2}{2\tau}+\nu\Vert\nabla\hat{e}_{\bu}^{1}\Vert^2\\
			\textstyle=&-\mathcal{Q}(t_{1})b(\bu(t_{1}),\bu(t_{1}),\hat{e}_{\bu}^{1})+\mathcal{Q}^{1}b(\bm u^0,\bm u^0,\hat{e}_{\bu}^{1})-(\nabla(p(t_{1})-p^0),\hat{e}_{\bu}^{1})-(\boldsymbol{R}_{\bu}^{1},\hat{e}_{\bu}^{1}),
		\end{aligned}
	\end{equation}
	Taking the inner product of \eqref{err:proj:grad:1} with $\frac{e_{\bu}^{1}+\hat{e}_{\bu}^{1}}{2}$, we derive
	\begin{equation}\label{err:proj:L2:1}
		\textstyle\frac{\Vert e_{\bu}^{1}\Vert^2-\Vert\hat{e}_{\bu}^{1}\Vert^2}{2\tau}-\frac{1}{2}(\nabla(p^{1}-p^0),\hat{e}_{\bu}^{1})=0
	\end{equation}
	Adding \eqref{err:mom:L2:1} and \eqref{err:proj:L2:1} with $e_{\bu}^0=0$, we have
	\begin{equation}\label{err:velo:add:1}
		\begin{aligned}
			&\frac{\Vert e_{\bu}^{1}\Vert^2+\Vert\hat{e}_{\bu}^{1}\Vert^2}{2\tau}+\nu\Vert\nabla\hat{e}_{\bu}^{1}\Vert^2\\
			\textstyle=&-(\mathcal{Q}(t_{1})(\bu(t_{1})\cdot\nabla)\bu(t_{1})-\mathcal{Q}^{1}(\bm u^0\cdot\nabla)\bm u^0,\hat{e}_{\bu}^{1})-\frac{1}{2}(\nabla(2p(t_{1})-p^{1}-p^0),\hat{e}_{\bu}^{1})\\
			&\textstyle-(\boldsymbol{R}_{\bu}^{1},\hat{e}_{\bu}^{1}).
		\end{aligned}
	\end{equation}
	For the first term on the right hand side of \eqref{err:velo:add:1}, it follows from properties of $b(\cdot,\cdot,\cdot)$ in \eqref{skew-C2} and the regularity assumption, so that
	\begin{equation}\label{nonlinear:1}
		\begin{aligned}
			\textstyle&-(\mathcal{Q}(t_{1})(\bu(t_{1})\cdot\nabla)\bu(t_{1})-\mathcal{Q}^{1}(\bm u^0\cdot\nabla)\bm u^0,\hat{e}_{\bu}^{1})\\
			\textstyle=& ~-b(\bu(t_{1})-\bu(t_0),\bu(t_{1}),\hat{e}_{\bu}^{1})-b(\bu(t_0),\bu(t_{1})-\bu(t_0),\hat{e}_{\bu}^{1})-e_{Q}^{1}b(\bu(t_0),\bu(t_0),\hat{e}_{\bu}^{1})\\
			\textstyle\leq& ~ 2(1+C_1)C_2\Vert\bu(t_{1})-\bu(t_0)\Vert\Vert\nabla\hat{e}_{\bu}^{1}\Vert+C_2(1+C_1)M^2|e_Q^{1}|\Vert\nabla\hat{e}_{\bu}^{1}\Vert\\
			\textstyle\leq& \textstyle~\frac{64(1+C_1)^2C_2^2}{\nu}\tau\int_{t_0}^{t_{1}}\Vert\bu_t\Vert^2dt+\frac{\nu}{32}\Vert\nabla\hat{e}_{\bu}^{1}\Vert^2+\frac{16}{\nu}C_2^2(1+C_1)^2M^4|e_Q^{1}|^2.
		\end{aligned}
	\end{equation}
	With \eqref{err:proj:grad:1} and $e_p^0=0$, the third and forth terms on the right-hand side of \eqref{err:velo:add:1} can be bounded by
	\begin{equation}\label{err:p:1}
		\begin{aligned}
			\textstyle-\frac{1}{2}(\nabla(2p(t_{1})-p^{1}-p^0),\hat{e}_{\bu}^{1})=&~\frac{\tau}{2}(\nabla(e_p^{1}+e_p^0+p(t_{1})-p(t_0)),\nabla(-e_p^{1}+e_p^0+p(t_{1})-p(t_0)))\\
			\textstyle=&-\frac{\tau}{2}\Vert\nabla e_p^{1}\Vert^2+\frac{\tau}{2}\Vert\nabla(p(t_{1})-p(t_0))\Vert^2\\
			\textstyle\leq&-\frac{\tau}{2}\Vert\nabla e_p^{1}\Vert^2+\frac{\tau^2}{2}\int_{t_0}^{t_{1}}\Vert\nabla p_t\Vert^2dt,
		\end{aligned}
	\end{equation}
	and
	\begin{equation}\label{err:trunc:1}
		\begin{aligned}
			\textstyle -(\boldsymbol{R}_{\bu}^{1},\hat{e}_{\bu}^{1})\leq& \textstyle~
			~\left\Vert\frac{1}{\tau}\int_{t_{0}}^{t_{1}}(t-t_{0})\bu_{tt}dt\right\Vert_{-1}\Vert e_{\bu}^{1}\Vert_{1}\\
			\leq&\textstyle~(1+C_1)\left(\int_{t_{0}}^{t_{1}}\Vert\bu_{tt}\Vert_{-1}dt\right)\Vert\nabla \hat{e}_{\bu}^{1}\Vert\\
			\textstyle\leq&\textstyle
			~\frac{8(1+C_1)^2}{\nu}\tau\int_{t_{0}}^{t_{1}}\Vert\bu_{tt}\Vert_{-1}^2dt+\frac{\nu}{32}\Vert\nabla\hat{e}_{\bu}^{1}\Vert^2.
		\end{aligned}
	\end{equation}
	Combine \eqref{err:velo:add:1}-\eqref{err:trunc:1} and multiply the resulting estimate by $2\tau$, it yields
	\begin{equation}\label{err:velo:sum:1}
		\begin{aligned}
			\textstyle&\Vert e_{\bu}^{1}\Vert^2+\Vert\hat{e}_{\bu}^{1}\Vert^2+\tau^2\Vert\nabla e_p^{1}\Vert^2+\frac{15\nu}{8}\Vert\nabla\hat{e}_{\bu}^{1}\Vert^2\\
			\textstyle\leq&~\frac{128(1+C_1)^2C_2^2}{\nu}\tau^2\int_{t_0}^{t_{1}}\Vert\bu_t\Vert^2dt+\frac{32}{\nu}C_2^2(1+C_1)^2M^4\tau|e_Q^{1}|^2+\frac{16(1+C_1)^2}{\nu}\tau^2\int_{t_{n}}^{t_{n+1}}\Vert\bu_{tt}\Vert_{-1}^2dt\\
			\textstyle&+\tau^3\int_{t_n}^{t_{n+1}}\Vert\nabla p_t\Vert^2dt.
		\end{aligned}
	\end{equation}
	\textbf{Step 2: Estimates of the Lagrange multiplier.}  The presence of the Lagrange multiplier error term $|e_Q^{1}|^2$ on the right-hand side of \eqref{err:velo:sum:1} requires an estimate. To do so, we first utilize the quantity  $|\mathcal{Q}^{1}+1|\geq 1$, recalling that the exact value of $\mathcal{Q}(t)$ is 1 and $\mathcal{Q}^0$= 1. From equation \eqref{bdf1:proj:grad}, we obtain the relation $\Vert\bu^1-\hat{\bu}^1\Vert^2=\tau^2\Vert\nabla(p^1-p^0)\Vert^2$. Taking $L^2$ inner product of \eqref{bdf1:proj:grad} with $\nabla p^0$ and adding \eqref{bdf1:sav:grad}, it holds:
	\begin{equation}\label{Q:err:orig:1}
		\begin{aligned}
			\textstyle&\theta\frac{(\mathcal{Q}^{1})^2-(\mathcal{Q}^0)^2}{\tau}\\
			\textstyle=&~\frac{\Vert\bu^{1}-\hat{\bu}^{1}\Vert^2+\Vert\hat{\bu}^{1}-\bu^0\Vert^2}{2\tau}+\mathcal{Q}^{1}b(\bu^0,\bu^0,\hat{\bu}^{1})+(\nabla p^0,\hat{\bu}^{1})-\frac{\tau}{2}(\Vert\nabla p^{1}\Vert^2-\Vert\nabla p^0\Vert^2)\\
			\textstyle=&~\frac{\tau^2\Vert\nabla(p^{1}-p^0)\Vert^2+\Vert\hat{\bu}^1-\bu^0\Vert^2}{2\tau}+\mathcal{Q}^{1}b(\bu^0,\bu^0,\hat{\bu}^{1})+(\nabla p^0,\hat{\bu}^{1})-\frac{\tau}{2}(\Vert\nabla p^{1}\Vert^2-\Vert\nabla p^0\Vert^2)\\
			\textstyle=&~\frac{\Vert\hat{\bu}^{1}-\bu^0\Vert^2}{2\tau}+\mathcal{Q}^{1}b(\bu^0,\bu^0,\hat{\bu}^{1}).
		\end{aligned}
	\end{equation}
	Noting that $\mathcal{Q}^0=1$ and $\mathcal{Q}^1>0$, then
	\begin{equation}\label{Q:err:mid:1}
		\begin{aligned}
			\textstyle\theta\frac{e_Q^{1}-e_Q^0}{\tau}=&~\frac{1}{\mathcal{Q}^{1}+1}\left(\frac{\Vert\hat{\bu}^{1}-\bu^0\Vert^2}{2\tau}+\mathcal{Q}^{1}b(\bu^0,\bu^0,\hat{\bu}^{1})\right)
			%\textstyle\leq&~\frac{\Vert\hat{e}_{\bu}^{1}\Vert^2+\Vert\bu(t_{1})-\bu(t_0)\Vert^2}{\tau}+|\mathcal{Q}^{1}b(\bu^0,\bu^0,\hat{\bu}^{1})|.
		\end{aligned}
	\end{equation}
	Taking $L^2$ inner product of \eqref{Q:err:mid:1} with $e_Q^{1}$, we obtain
	\begin{equation}\label{Q:err:1}
		\begin{aligned}
			\textstyle\theta\frac{|e_Q^{1}|^2}{\tau}
			\leq&~ \frac{|e_Q^{1}|}{\mathcal{Q}^{1}+1 }\left( \frac{\Vert\hat{\bu}^{1}-\bu^0\Vert^2}{2\tau}+|\mathcal{Q}^{1}b(\bu^0,\bu^0,\hat{\bu}^{1})|\right)
			\\
			\leq&~|e_Q^{1}|\left(\frac{\Vert\hat{e}_{\bu}^{1}\Vert^2+\Vert\bu(t_{1})-\bu(t_0)\Vert^2}{\tau}+|\mathcal{Q}^{1}b(\bu^0,\bu^0,\hat{\bu}^{1}) |\right)\\
			\textstyle\leq&~|e_Q^{1}|\frac{\Vert\hat{e}_{\bu}^{1}\Vert^2+\Vert\bu(t_{1})-\bu(t_0)\Vert^2}{\tau}+C_0|e_Q^{1}b(\bu^0,\bu^0,\hat{\bu}^{1})|\\
			\textstyle\leq&~|e_Q^{1}|\frac{\Vert\hat{e}_{\bu}^{1}\Vert^2}{\tau}+|e_Q^{1}|^2+\frac14\tau\int_{t_0}^{t_{1}}\Vert\bu_t\Vert^4dt+C_0|e_Q^{1}b(\bu^0,\bu^0,\hat{\bu}^{1})|.
		\end{aligned}
	\end{equation}
	Using the properties of $b(\cdot,\cdot,\cdot)$ (cf.\eqref{skew-C2}), we find
	\begin{equation}\label{Q:b:1}
		\begin{aligned}
			\textstyle C_0|e_Q^{1}b(\bu^0,\bu^0,\hat{\bu}^{1})|=&~C_0|e_Q^{1}b(\bu(t_0),\bu(t_0),\bu(t_{1})-\hat{e}_{\bu}^{1})|\\
			\textstyle\leq&~C_0|e_Q^{1}\left(b(\bu(t_0),\bu(t_0),\bu(t_{1})-\bu(t_0))-b(\bu(t_0),\bu(t_0),\hat{e}_{\bu}^{1})\right)|\\
			\leq&\textstyle
			~ C_0^2C_2^2M^4\left(1+\frac{2(1+C_1)^2}{\theta\nu}\right)|e_Q^{1}|^2+\frac14\tau\int_{t_0}^{t_{1}}\Vert\bu_t\Vert^2dt +\frac{\theta\nu}{8}\Vert\nabla\hat{e}_{\bu}^{1}\Vert^2.
		\end{aligned}
	\end{equation} 
	Observing that $\mathcal{Q}^1\leq C_0$ implies $|e_Q^1|\leq 1+C_0$, so we choose $\theta\geq 2(1+C_0)$. Combining \eqref{Q:b:1}  with \eqref{Q:err:1}, we derive
	\begin{equation*}\label{Q:error:1}
		\textstyle\frac{|e_Q^{1}|^2}{\tau}\leq C_3^\star|e_Q^{1}|^2+\frac{\Vert\hat{e}_{\bu}^{1}\Vert^2}{2\tau}+\frac{\tau}{4\theta}\int_{t_0}^{t_{1}}\Vert\bu_t\Vert^4dt+\frac{\tau}{4\theta}\int_{t_0}^{t_{1}}\Vert\bu_t\Vert^2dt +\frac{\nu}{8}\Vert\nabla\hat{e}_{\bu}^{1}\Vert^2,
	\end{equation*}
	where 
	\begin{equation*}
		\textstyle C_3^\star=\frac{C_0^2C_2^2M^4}{\theta}\left(1+\frac{2(1+C_1)^2}{\theta\nu}\right)+\frac{1}{\theta}.
	\end{equation*}
	Multiplying $\tau$ on the both sides, we obtain
	\begin{equation}\label{Q:error:sum:1}
		\textstyle|e_Q^{1}|^2\leq C_3^\star\tau|e_Q^{1}|^2+\frac{\Vert\hat{e}_{\bu}^{1}\Vert^2}{2}+\frac{\tau^2}{4\theta}\int_{t_0}^{t_{1}}\Vert\bu_t\Vert^4dt+\frac{\tau^2}{4\theta}\int_{t_0}^{t_{1}}\Vert\bu_t\Vert^2dt +\frac{\nu}{8}\tau\Vert\nabla\hat{e}_{\bu}^{1}\Vert^2.
	\end{equation}
	\textbf{Step 3: Simultaneous estimates for the velocity and Lagrange multiplier.} In light of Step 1 and Step 2,  we find
	\begin{equation}\label{velo:lagrange:err:1}
		\begin{aligned}
			\textstyle&\Vert e_{\bu}^{1}\Vert^2+\frac12\Vert\hat{e}_{\bu}^{1}\Vert^2+\tau^2\Vert\nabla e_p^{1}\Vert^2+\frac{7}{4}\nu\tau\Vert\nabla\hat{e}_{\bu}^{1}\Vert^2+|e_Q^{1}|^2\\
			\leq&\textstyle
			~\frac{128(1+C_1)^2C_2^2}{\nu}\tau^2\int_{t_0}^{t_{1}}\Vert\bu_t\Vert^2dt+(\frac{32}{\nu}C_2^2(1+C_1)^2M^4+C_3^\star)\tau|e_Q^{1}|^2+\frac{16(1+C_1)^2}{\nu}\tau^2\int_{t_{0}}^{t_{1}}\Vert\bu_{tt}\Vert_{-1}^2dt\\
			\textstyle&\textstyle
			+\tau^3\int_{t_0}^{t_{1}}\Vert\nabla p_t\Vert^2dt+\frac{\tau^2}{2\theta}\int_{t_0}^{t_{1}}\Vert\bu_t\Vert^4dt+\frac{\tau^2}{4\theta}\int_{t_0}^{t_{1}}\Vert\bu_t\Vert^2dt .
		\end{aligned}
	\end{equation}
	Let $\tau\leq(\frac{64}{\nu}C_2^2(1+C_1)^2M^4+2C_3^\star)^{-1}$, we derive
	\begin{equation}\label{err:1}
		\textstyle\Vert e_{\bu}^{1}\Vert^2+\Vert\hat{e}_{\bu}^{1}\Vert^2+\tau^2\Vert\nabla e_p^{1}\Vert^2+\frac{7}{4}\nu\tau\Vert\nabla\hat{e}_{\bu}^{1}\Vert^2+\frac12|e_Q^{1}|^2\leq C^\star\tau^2.
	\end{equation}
	where
	\begin{equation*}
		\begin{aligned}
			\textstyle C^\star=&
			\textstyle
			\frac{128(1+C_1)^2C_2^2}{\nu}\int_{t_0}^{t_{1}}\Vert\bu_t\Vert^2dt+\frac{16(1+C_1)^2}{\nu}\int_{t_{0}}^{t_{1}}\Vert\bu_{tt}\Vert_{-1}^2dt\\
			\textstyle&
			\textstyle
			+\tau\int_{t_0}^{t_{1}}\Vert\nabla p_t\Vert^2dt+\frac{1}{2\theta}\int_{t_0}^{t_{1}}\Vert\bu_t\Vert^4dt+\frac{1}{4\theta}\int_{t_0}^{t_{1}}\Vert\bu_t\Vert^2dt.
		\end{aligned}
	\end{equation*}
\end{proof}
Thanks to Lemma \ref{velocity estimate1} and mathematical induction, we now can prove the following theorem.
\begin{thm}\label{velocity estimate}
	Let $\{\bu^i\}_{i=1}^n$, $\{p^1i\}_{i=1}^n$, and $\{Q^i\}_{i=1}^n$ be generated by the P-DRLM1 scheme \eqref{discretization:1or}. Under the regularity assumptions \eqref{regu}, there exist positive constants $\tau$, $C_1$, and $C_2$ depending on $\Omega$, $T$, $\nu$, $M$ and $\bu^0$ but independent of $\tau$ and $n$ such that the following error estimates hold for all $\tau < \tau_0:=\min\{\frac{1}{(\frac{C_6C_8}{\nu}+ C_9)^2}, \frac{C_6C_8+C_9}{2}\}, \theta\geq 8(1+C_0)$:
	\begin{equation}\label{u:Q:err}
		\textstyle\Vert e_{\bu}^{n+1}\Vert^2+\sum\limits_{i=0}^n\Vert\hat{e}_{\bu}^{i+1}-e_{\bu}^i\Vert^2+|e_Q^{n+1}|^2+\tau^2\Vert\nabla e_p^{n+1}\Vert^2+\nu\tau\sum\limits_{i=0}^n\Vert\nabla\hat{e}_{\bu}^{i+1}\Vert^2\leq (C_6C_8+C_9)\tau^2,\quad0\leq n\leq N-1.
	\end{equation}
\end{thm}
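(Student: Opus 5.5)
The plan is an induction on $n$ with Lemma \ref{velocity estimate1} as the base case. The inductive hypothesis is that \eqref{u:Q:err} holds up to step $n$. Since $\tau^2\le\tau$ for small $\tau$, this gives $\Vert e_{\bu}^{i}\Vert\le C\tau$ and $|e_Q^i|\le C\tau$. It also gives $\tau\Vert\nabla e_p^i\Vert\le C\tau$ and $\nu\tau\sum\Vert\nabla\hat e_{\bu}^{i}\Vert^2\le C\tau^2$. The key consequence is an $H^1$-type control of the numerical velocity: combining $\bu^i=\hat\bu^i-\tau\nabla(p^i-p^{i-1})$ with $\Vert\nabla\hat e_{\bu}^i\Vert^2\le C\tau/\nu$, we get $\Vert\nabla \hat\bu^i\Vert\le M+C\tau^{1/2}\le 2M$ once $\tau$ is small. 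This is where the condition $\tau<\tau_0\sim(\frac{C_6C_8}{\nu}+C_9)^{-2}$ enters. Lemma \ref{Q:bound} gives $\mathcal{Q}^i\le C_0$ and $\Vert\bu^i\Vert\le\tilde M\le M$ for free.

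\textbf{Velocity step.} Subtract \eqref{NS:orig:mom} at $t_{i+1}$ from \eqref{bdf1:mom:grad} to obtain the error system. Its first equation is the analogue of \eqref{err:mom:grad:1} with $e_{\bu}^i$ in place of $e_{\bu}^0$. Its second equation is $\frac{e_{\bu}^{i+1}-\hat e_{\bu}^{i+1}}{\tau}+\nabla(e_p^{i+1}-e_p^i)=-\nabla(p(t_{i+1})-p(t_i))$; I write it in this form because $p$ is the exact pressure. Test the momentum error with $\hat e_{\bu}^{i+1}$ and the projection error with $e_{\bu}^{i+1}$ and with $\tau\nabla e_p^{i}$, exactly as in the energy identity \eqref{prj:energy}. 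This produces
\[
\tfrac12\big(\Vert e_{\bu}^{i+1}\Vert^2-\Vert e_{\bu}^i\Vert^2+\Vert\hat e_{\bu}^{i+1}-e_{\bu}^i\Vert^2\big)+\tfrac{\tau^2}{2}\big(\Vert\nabla e_p^{i+1}\Vert^2-\Vert\nabla e_p^i\Vert^2\big)+\nu\tau\Vert\nabla\hat e_{\bu}^{i+1}\Vert^2
\]
on the left, up to the term $\tau\Vert\nabla(p(t_{i+1})-p(t_i))\Vert^2\le\tau^2\int\Vert\nabla p_t\Vert^2$. The truncation term is handled as in \eqref{err:trunc:1}. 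The nonlinear difference $\mathcal{Q}(t_{i+1})\mathcal N(\bu(t_{i+1}))\bu(t_{i+1})-\mathcal{Q}^{i+1}\mathcal N(\bu^i)\bu^i$ is split into four pieces: a time-difference part $\bu(t_{i+1})-\bu(t_i)$, the terms $b(e_{\bu}^i,\bu(t_i),\hat e_{\bu}^{i+1})$ and $b(\bu^i,e_{\bu}^i,\hat e_{\bu}^{i+1})$, and $e_Q^{i+1}b(\bu^i,\bu^i,\hat e^{i+1}_{\bu})$. Each piece is bounded with \eqref{skew-C2} and $\mathcal{Q}^{i+1}\le C_0$. The dangerous term is $b(\bu^i,e_{\bu}^i,\hat e_{\bu}^{i+1})$. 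For it I would use skew-symmetry to move the derivative onto $\hat e_{\bu}^{i+1}$, and then use the inductive $H^1$ bound on $\bu^i$, which puts it in the form $C\Vert e^i_{\bu}\Vert\Vert\nabla \hat e_{\bu}^{i+1}\Vert$ up to constants. This absorbs into $\frac{\nu}{8}\Vert\nabla\hat e_{\bu}^{i+1}\Vert^2+\frac{C}{\nu}\Vert e_{\bu}^i\Vert^2$, and the constant $C_6C_8/\nu$ arises here.

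\textbf{Multiplier step.} As in Step 2 of Lemma \ref{velocity estimate1}, combine \eqref{bdf1:sav:grad} with \eqref{bdf1:proj:grad} tested by $\nabla p^i$ to obtain the analogue of \eqref{Q:err:orig:1}. Dividing by $\mathcal{Q}^{i+1}+\mathcal{Q}^i>0$ and subtracting $\theta(1^2-1^2)=0$ gives an equation for $e_Q^{i+1}-e_Q^i$. Testing it with $e_Q^{i+1}$ yields the bound
\[
\theta\big(|e_Q^{i+1}|^2-|e_Q^i|^2\big)\le C\tau|e_Q^{i+1}|\Big(\tfrac{\Vert\hat e^{i+1}_{\bu}-e^i_{\bu}\Vert^2+\Vert\bu(t_{i+1})-\bu(t_i)\Vert^2}{\tau}+|b(\bu^i,\bu^i,\hat\bu^{i+1})|\Big).
\]
The key point is that $b(\bu^i,\bu^i,\hat\bu^{i+1})$ is $O(\tau)$-small. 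Write $b(\bu^i,\bu^i,\hat\bu^{i+1})=b(\bu^i,\bu^i,\hat\bu^{i+1}-\bu^i)$, which holds because $b(\bu^i,\bu^i,\bu^i)=0$ for $\bu^i\in\boldsymbol V$. Then bound it by $\Vert\hat e^{i+1}_{\bu}-e^i_{\bu}\Vert$, $\Vert\nabla\hat e^{i+1}_{\bu}\Vert$, and $\Vert\bu(t_{i+1})-\bu(t_i)\Vert_1$. The quadratic term $|e_Q^{i+1}|\,\Vert\hat e^{i+1}_{\bu}-e^i_{\bu}\Vert^2/\tau$ is controlled using $|e_Q^{i+1}|\le 1+C_0$ together with $\theta\ge 8(1+C_0)$. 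This lets it be absorbed by the dissipation $\frac12\Vert\hat e_{\bu}^{i+1}-e_{\bu}^i\Vert^2$ from the velocity step, which is precisely why that sum appears on the left of \eqref{u:Q:err}.

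\textbf{Closing and main obstacle.} Add the two estimates and sum from $i=0$ to $n$. The right-hand side then consists of $\tau^2$ times the data integrals, bounded via \eqref{maxM} and \eqref{regu} (this gives $C_9$), plus $C\tau\sum_{i\le n}(\Vert e_{\bu}^i\Vert^2+|e_Q^i|^2)$ and $C\tau|e_Q^{n+1}|^2$. The last term is absorbed for $\tau<\tau_0$. Lemma \ref{discrete Gronwall} then yields \eqref{u:Q:err} with constant $C_6C_8+C_9$, where $C_8=e^{C_6T}$-type comes from the Gronwall exponential, which closes the induction. The main obstacle is the nonlinear coupling in both steps: errors in the numerical velocity $\bu^i$ must be controlled in a norm strong enough for \eqref{skew-C2}, yet only $L^2$ and discrete $L^2(H^1)$ information on the error is available. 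I would first try to get by with the $\tau^{1/2}$ smallness of $\nabla\hat e_{\bu}^i$ from the inductive hypothesis, the skew-symmetry trick, and the smallness restriction on $\tau$. If that fails in $d=3$, the fallback is an extra $H^2$-type estimate for $\hat e_{\bu}$.
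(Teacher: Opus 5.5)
Your overall architecture matches the paper's. That includes the induction from Lemma \ref{velocity estimate1}, the energy-type identity for $(e_{\bu},\tau\nabla e_p)$, and the multiplier identity divided by $\mathcal{Q}^{i+1}+\mathcal{Q}^i$. It also includes absorbing $|e_Q^{i+1}|\,\Vert\hat e_{\bu}^{i+1}-e_{\bu}^i\Vert^2/\tau$ via $\theta\ge 8(1+C_0)$, and Gr\"onwall.

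The genuine gap is the step you flag as the main obstacle: the term $b(\bu^i,e_{\bu}^i,\hat e_{\bu}^{i+1})$. After skew-symmetry it becomes $-b(\bu^i,\hat e_{\bu}^{i+1},e_{\bu}^i)$. A bound $C\Vert e_{\bu}^i\Vert\,\Vert\nabla\hat e_{\bu}^{i+1}\Vert$ requires $\bu^i\in L^\infty$, i.e.\ the fifth case of \eqref{skew-C2} with $\Vert\bu^i\Vert_2$. An $H^1$ bound on $\bu^i$ does not give this, in $d=2$ as much as in $d=3$.

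With only $\Vert\bu^i\Vert_1\le 2M$, the natural bound is $CM\Vert\nabla\hat e_{\bu}^{i+1}\Vert\,\Vert\nabla e_{\bu}^i\Vert$. The resulting term $\frac{CM^2}{\nu}\Vert\nabla e_{\bu}^i\Vert^2$ cannot be absorbed by the dissipation $\nu\Vert\nabla\hat e_{\bu}^i\Vert^2$ unless $M/\nu$ is small. It is also not a Gr\"onwall quantity. If you bound it with the inductive hypothesis instead, you get roughly $(M/\nu)^2K$ times the inductive constant $K$, so induction with a fixed constant does not close.

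Separately, the identity $\bu^i=\hat\bu^i-\tau\nabla(p^i-p^{i-1})$ alone does not transfer $\Vert\nabla\hat\bu^i\Vert\le 2M$ to $\bu^i$. For that you need the $H^1$-stability of the Leray projection, i.e.\ elliptic regularity for the Neumann problem satisfied by $\tau(p^i-p^{i-1})$.

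The missing idea is to put the exact solution, not $\bu^i$, in the first slot:
\[
b(\bu^i,e_{\bu}^i,\hat e_{\bu}^{i+1})=b(\bu(t_i),e_{\bu}^i,\hat e_{\bu}^{i+1})-b(e_{\bu}^i,e_{\bu}^i,\hat e_{\bu}^{i+1}).
\]
The first part is at most $C_2M\Vert e_{\bu}^i\Vert\,\Vert\hat e_{\bu}^{i+1}\Vert_1$, by skew-symmetry and $\Vert\bu(t_i)\Vert_2\le M$; this is Gr\"onwall-friendly. The second part is genuinely quadratic, at most $C\Vert e_{\bu}^i\Vert_1^2\Vert\nabla\hat e_{\bu}^{i+1}\Vert$. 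This is exactly where your inductive smallness $\Vert\nabla\hat e_{\bu}^i\Vert^2\le K\tau/\nu$ belongs. The quartic term satisfies $\frac{C}{\nu}\Vert\nabla e_{\bu}^i\Vert^4\le \frac{CK\tau}{\nu^2}\Vert\nabla e_{\bu}^i\Vert^2$, which is absorbed by the previous step's dissipation once $\tau$ is small. The paper does the equivalent: it keeps $\tau\sum_i\Vert\nabla\hat e_{\bu}^i\Vert^4$ on the right and bootstraps $\sum_i\Vert\nabla\hat e_{\bu}^i\Vert^2\le\alpha\tau$ in its Step 4.

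Alternatively, you can keep your $H^1$ bound on $\bu^i$ and use interpolation: $\Vert e_{\bu}^i\Vert_{L^3}\le C\Vert e_{\bu}^i\Vert^{1/2}\Vert e_{\bu}^i\Vert_1^{1/2}$ for $d\le 3$, followed by Young's inequality with an $\epsilon\nu\Vert e_{\bu}^i\Vert_1^2$ piece. Either way, you must pass from $\Vert\nabla e_{\bu}^i\Vert$ to $\Vert\nabla\hat e_{\bu}^i\Vert$ through the same Leray stability, since $e_{\bu}^i$ is the projection of $\hat e_{\bu}^i$. No extra $H^2$ estimate is needed.

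The same caveat applies to your bound of $b(\bu^i,\bu^i,\hat\bu^{i+1}-\bu^i)$ in terms of $\Vert\hat e_{\bu}^{i+1}-e_{\bu}^i\Vert$, which also needs $\Vert\bu^i\Vert_2$. There it is harmless: use $\Vert\nabla(\hat e_{\bu}^{i+1}-e_{\bu}^i)\Vert$ instead, and the prefactor $\tau|e_Q^{i+1}|$ lets Young's inequality absorb it.
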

\begin{proof}
	This proof proceeds by mathematical induction. According to Lemma \ref{velocity estimate1}, the estimate \eqref{u:Q:err} holds for $t = t_1$. Now assume that \eqref{u:Q:err} is valid for all $t = t_1, \dots, t_n$. We aim to show that it also holds for $t = t_{n+1}$. To obtain optimal error estimates for both the velocity and the Lagrange multiplier, we carry out the following steps.
	
	\textbf{Step 1: Estimates of velocity. } By subtracting the continuous momentum equation \eqref{NS:orig:mom} evaluated at $t_{n+1}$ from the semi-discrete scheme \eqref{bdf1:mom:grad}, we obtain the following error equation for the velocity and pressure:
	\begin{subequations}\label{error:proj}
		\begin{empheq}[left=\empheqlbrace]{align}
			&\textstyle\frac{\hat{e}_{\bu}^{n+1}-e_{\bu}^n}{\tau}-\nu\Delta\hat{e}_{\bu}^{n+1}=-\mathcal{Q}(t_{n+1})\mathcal{N}(\bu(t_{n+1}))\bu(t_{n+1})+\mathcal{Q}^{n+1}\mathcal{N}(\bm u^n)\bm u^n\nonumber\\
			&\textstyle\qquad\qquad\qquad\qquad\qquad-\nabla(p(t_{n+1})-p^n)-\boldsymbol{R}_{\bu}^{n+1}, \label{err:mom:grad}\\ 
			&\textstyle\frac{e_{\bu}^{n+1}-\hat{e}_{\bu}^{n+1}}{\tau}-\nabla(p^{n+1}-p^n)=0,\label{err:proj:grad}
		\end{empheq} 
	\end{subequations}
	where the truncation error $\bR_{u}^{n+1}$ is given by
	\begin{equation}\label{trunc:velo}
		\bR_u^{n+1} = \bu_{t}(t_{n+1})-\frac{\bu(t_{n+1})-\bu(t_n)}{\tau}=\frac{1}{\tau}\int_{t_{n}}^{t_{n+1}} (t-t_n)u_{tt}(t) \, dt.
	\end{equation}
	Taking the inner product of \eqref{err:mom:grad} with the projected velocity error $\hat{e}_{\bu}^{n+1}$, we obtain
	\begin{equation}\label{err:mom:L2}
		\begin{aligned}
			&\textstyle\frac{\Vert\hat{e}_{\bu}^{n+1}\Vert^2-\Vert e_{\bu}^n\Vert^2}{2\tau}+\frac{\Vert\hat{e}_{\bu}^{n+1}-e_{\bu}^n\Vert^2}{2\tau}+\nu\Vert\nabla\hat{e}_{\bu}^{n+1}\Vert^2\\
			\textstyle=&\textstyle-\mathcal{Q}(t_{n+1})b(\bu(t_{n+1}),\bu(t_{n+1}),\hat{e}_{\bu}^{n+1})+\mathcal{Q}^{n+1}b(\bm u^n,\bm u^n,\hat{e}_{\bu}^{n+1})-(\nabla(p(t_{n+1})-p^n),\hat{e}_{\bu}^{n+1})\\
			&\textstyle-(\boldsymbol{R}_{\bu}^{n+1},\hat{e}_{\bu}^{n+1}),
		\end{aligned}
	\end{equation}
	Taking the inner product of \eqref{err:proj:grad} with $\frac{e_{\bu}^{n+1}-\hat{e}_{\bu}^{n+1}}{2}$, we derive
	\begin{equation}\label{err:proj:L2}
		\textstyle\frac{\Vert e_{\bu}^{n+1}\Vert^2-\Vert\hat{e}_{\bu}^{n+1}\Vert^2}{2\tau}-\frac{1}{2}(\nabla(p^{n+1}-p^n),\hat{e}_{\bu}^{n+1})=0
	\end{equation}
	Adding \eqref{err:mom:L2} and \eqref{err:proj:L2}, we have
	\begin{equation}\label{err:velo:add}
		\begin{aligned}
			&\textstyle\frac{\Vert e_{\bu}^{n+1}\Vert^2-\Vert e_{\bu}^n\Vert^2}{2\tau}+\frac{\Vert\hat{e}_{\bu}^{n+1}-e_{\bu}^n\Vert^2}{2\tau}+\nu\Vert\nabla\hat{e}_{\bu}^{n+1}\Vert^2\\
			\textstyle=&\textstyle-(\mathcal{Q}(t_{n+1})(\bu(t_{n+1})\cdot\nabla)\bu(t_{n+1})-\mathcal{Q}^{n+1}(\bm u^n\cdot\nabla)\bm u^n,\hat{e}_{\bu}^{n+1})-\frac{1}{2}(\nabla(2p(t_{n+1})-p^{n+1}-p^n),\hat{e}_{\bu}^{n+1})\\
			&\textstyle-(\boldsymbol{R}_{\bu}^{n+1},\hat{e}_{\bu}^{n+1}).
		\end{aligned}
	\end{equation}
	For the first term on the right-hand side of \eqref{err:velo:add}, we have
	\begin{equation*}
		\textstyle-(\mathcal{Q}(t_{n+1})(\bu(t_{n+1})\cdot\nabla)\bu(t_{n+1})-\mathcal{Q}^{n+1}(\bm u^n\cdot\nabla)\bm u^n,\hat{e}_{\bu}^{n+1})=\sum\limits_{i=1}^4B_i,
	\end{equation*}
	where 
	\begin{equation*}
		\begin{aligned}
			&\textstyle B_1=-b(\bu(t_{n+1})-\bu(t_n),\bu(t_{n+1}),\hat{e}_{\bu}^{n+1})-b(\bu(t_{n}),\bu(t_{n+1})-\bu(t_n),\hat{e}_{\bu}^{n+1}),\\
			&\textstyle B_2=-\mathcal{Q}^{n+1}b(e_{\bu}^n,\bu(t_n),\hat{e}_{\bu}^{n+1})-\mathcal{Q}^{n+1}b(\bu(t_n),e_{\bu}^n,\hat{e}_{\bu}^{n+1}),\\
			&\textstyle B_3=\mathcal{Q}^{n+1}b(e_{\bu}^n,e_{\bu}^n,\hat{e}_{\bu}^{n+1}),\\
			&\textstyle B_4=-e_{Q}^{n+1}b(\bu(t_n),\bu(t_n),\hat{e}_{\bu}^{n+1}),
		\end{aligned}
	\end{equation*}
	It follows from the properties of the trilinear form $b(\cdot,\cdot,\cdot)$ in \eqref{skew-C2} and the regularity assumption that
	\begin{equation}\label{err:B}
		\begin{aligned}
			\textstyle B_1\leq
			&\textstyle~2(1+C_1)C_2\Vert\bu(t_{n+1})-\bu(t_n)\Vert\Vert\nabla\hat{e}_{\bu}^{n+1}\Vert
			\leq~\frac{64(1+C_1)^2C_2^2}{\nu}\tau\int_{t_n}^{t_{n+1}}\Vert\bu_t\Vert^2dt+\frac{\nu}{64}\Vert\nabla\hat{e}_{\bu}^{n+1}\Vert^2,\\
			\textstyle B_2\leq
			&\textstyle~2(1+C_1) C_0 C_2M\Vert e_{\bu}^n\Vert\Vert\nabla\hat{e}_{\bu}^{n+1}\Vert
			\leq~\frac{64(1+C_1)^2 C_0^2C_2^2M^2}{\nu}\Vert e_{\bu}^n\Vert^2+\frac{\nu}{64}\Vert\nabla\hat{e}_{\bu}^{n+1}\Vert^2,\\
			\textstyle B_3\leq
			&\textstyle ~(1+C_1)^3 C_0 C_2\Vert\nabla e_{\bu}^n\Vert^2\Vert\nabla\hat{e}_{\bu}^{n+1}\Vert
			\leq~\frac{16(1+C_1)^6 C_0^2C_2^2}{\nu}\Vert\nabla e_{\bu}^n\Vert^4+\frac{\nu}{64}\Vert\nabla\hat{e}_{\bu}^{n+1}\Vert^2,\\
			\textstyle B_4\leq
			&\textstyle~C_2(1+C_1)M^2|e_Q^{n+1}|\Vert\nabla\hat{e}_{\bu}^{n+1}\Vert
			\leq~\frac{16}{\nu}C_2^2(1+C_1)^2M^4|e_Q^{n+1}|^2+\frac{\nu}{64}\Vert\nabla\hat{e}_{\bu}^{n+1}\Vert^2.
		\end{aligned}
	\end{equation}
	The third term on the right-hand side of \eqref{err:velo:add} can be bounded by
	\begin{equation}\label{err:p}
		\begin{aligned}
			&\textstyle-\frac{1}{2}(\nabla(2p(t_{n+1})-p^{n+1}-p^n),\hat{e}_{\bu}^{n+1})\\
			\textstyle=&\textstyle~\frac{\tau}{2}(\nabla(e_p^{n+1}+e_p^n+p(t_{n+1})-p(t_n)),\nabla(-e_p^{n+1}+e_p^n+p(t_{n+1})-p(t_n)))\\
			\textstyle=&\textstyle-\frac{\tau}{2}(\Vert\nabla e_p^{n+1}\Vert^2-\Vert\nabla e_p^n\Vert^2)+\tau(\nabla e_p^n,\nabla(p(t_{n+1})-p(t_n)))+\frac{\tau}{2}\Vert\nabla(p(t_{n+1})-p(t_n))\Vert^2\\
			\textstyle\leq&\textstyle-\frac{\tau}{2}(\Vert\nabla e_p^{n+1}\Vert^2-\Vert\nabla e_p^n\Vert^2)+\tau^2\Vert\nabla e_p^n\Vert^2+(1+\frac{\tau}{2})\tau\int_{t_n}^{t_{n+1}}\Vert\nabla p_t\Vert^2dt.
		\end{aligned}
	\end{equation}
	The inner product of the truncation error $R_{\bu}^{n+1}$ and $\hat{e}_{\bu}^{n+1}$ is estimated as follows:
	\begin{equation}\label{err:trunc}
		\begin{aligned}
			\textstyle -(\boldsymbol{R}_{\bu}^{n+1},\hat{e}_{\bu}^{n+1})\leq
			&\textstyle
			~\left\Vert\frac{1}{\tau}\int_{t_{n}}^{t_{n+1}}(t-t_{n})\bu_{tt}dt \right\Vert_{-1}\Vert e_{\bu}^{n+1}\Vert_{1}\leq(1+C_1)\left(\int_{t_{n}}^{t_{n+1}}\Vert\bu_{tt}\Vert_{-1}dt\right)\Vert\nabla \hat{e}_{\bu}^{n+1}\Vert\\
			\textstyle\leq
			&\textstyle~\frac{8(1+C_1)^2}{\nu}\tau\int_{t_{n}}^{t_{n+1}}\Vert\bu_{tt}\Vert_{-1}^2dt+\frac{\nu}{32}\Vert\nabla\hat{e}_{\bu}^{n+1}\Vert^2.
		\end{aligned}
	\end{equation}
	The combination of \eqref{err:velo:add}-\eqref{err:trunc} results in
	\begin{equation}\label{err:velo:sum}
		\begin{aligned}
			&\textstyle\frac{\Vert e_{\bu}^{n+1}\Vert^2-\Vert e_{\bu}^n\Vert^2}{2\tau}+\frac{\Vert\hat{e}_{\bu}^{n+1}-e_{\bu}^n\Vert^2}{2\tau}+\frac{\tau}{2}(\Vert\nabla e_p^{n+1}\Vert^2-\Vert\nabla e_p^n\Vert^2)+\frac{29\nu}{32}\Vert\nabla\hat{e}_{\bu}^{n+1}\Vert^2\\
			\textstyle\leq
			&\textstyle~\frac{64(1+C_1)^2C_2^2}{\nu}\tau\int_{t_n}^{t_{n+1}}\Vert\bu_t\Vert^2dt+\frac{64(1+C_1)^2 C_0^2C_2^2M^2}{\nu}\Vert e_{\bu}^n\Vert^2+\frac{16(1+C_1)^6 C_0^2C_2^2}{\nu}\Vert\nabla e_{\bu}^n\Vert^4\\
			&\textstyle+\frac{16}{\nu}C_2^2(1+C_1)^2M^4|e_Q^{n+1}|^2+\frac{8(1+C_1)^2}{\nu}\tau\int_{t_{n}}^{t_{n+1}}\Vert\bu_{tt}\Vert_{-1}^2dt\\
			&\textstyle+\tau^2\Vert\nabla e_p^n\Vert^2+(1+\frac{\tau}{2})\tau\int_{t_n}^{t_{n+1}}\Vert\nabla p_t\Vert^2dt.
		\end{aligned}
	\end{equation}
	Summing \eqref{err:velo:end} over $i=0,\cdots,n$ and multiplying the resulting estimate by $2\tau$, we deduce that
	\begin{equation}\label{err:velo:end}
		\begin{aligned}
			&\textstyle\Vert e_{\bu}^{n+1}\Vert^2+\sum\limits_{i=0}^n\Vert e_{\bu}^{i+1}-\hat{e}_{\bu}^{i+1}\Vert^2+\sum\limits_{i=0}^n\Vert\hat{e}_{\bu}^{i+1}-e_{\bu}^i\Vert^2+\tau^2\Vert\nabla e_p^{n+1}\Vert^2+\frac{29}{16}\nu\tau\sum\limits_{i=0}^n\Vert\nabla\hat{e}_{\bu}^{i+1}\Vert^2\\
			\textstyle\leq
			&\textstyle ~\frac{128(1+C_1)^2C_2^2}{\nu}\tau^2\int_{0}^{t_{n+1}}\Vert\bu_t\Vert^2dt+\frac{128(1+C_1)^2 C_0^2C_2^2M^2}{\nu}\tau\sum\limits_{i=0}^n\Vert e_{\bu}^i\Vert^2\\
			\textstyle
			&\textstyle~
			+\frac{32(1+C_1)^6 C_0^2C_2^2}{\nu}\tau\sum\limits_{i=0}^n\Vert\nabla e_{\bu}^i\Vert^4 +\frac{32}{\nu}C_2^2(1+C_1)^2M^4\tau\sum\limits_{i=0}^n|e_Q^{i+1}|^2\\
			\textstyle
			&\textstyle~
			+\frac{16(1+C_1)^2}{\nu}\tau^2\int_{0}^{t_{n+1}}\Vert\bu_{tt}\Vert_{-1}^2dt+2\tau^3\sum\limits_{i=0}^n\Vert\nabla e_p^i\Vert^2+(2+\tau)\tau^2\int_{0}^{t_{n+1}}\Vert\nabla p_t\Vert^2dt.
		\end{aligned}
	\end{equation}
	\textbf{Step 2: Estimates of the Lagrange multiplier.}  As the right-hand side of \eqref{err:velo:end} includes the error term $|e_Q^{n+1}|^2$ associated with the Lagrange multiplier, its estimation is necessary. This estimate is derived from the dynamic equation \eqref{bdf1:sav:grad}. Following the same approach as in \eqref{Q:err:orig:1}, an examination of \eqref{bdf1:sav:grad} reveals that it can be expressed in the following form:
	\begin{equation}\label{Q:err:orig}
		\begin{aligned}
			&\textstyle\theta\frac{(\mathcal{Q}^{n+1})^2-(\mathcal{Q})^2}{\tau}\\
			\textstyle=&\textstyle\frac{\Vert\bu^{n+1}-\hat{\bu}^{n+1}\Vert^2+\Vert\hat{\bu}^{n+1}-\bu^n\Vert^2}{2\tau}+\mathcal{Q}^{n+1}b(\bu^n,\bu^n,\hat{\bu}^{n+1})+(\nabla p^n,\hat{\bu}^{n+1})-\frac{\tau}{2}(\Vert\nabla p^{n+1}\Vert^2-\Vert\nabla p^n\Vert^2)\\
			\textstyle=&\textstyle\frac{\tau^2\Vert\nabla(p^{n+1}-p^n)\Vert^2+\Vert\hat{\bu}^{n+1}-\bu^n\Vert^2}{2\tau}+\mathcal{Q}^{n+1}b(\bu^n,\bu^n,\hat{\bu}^{n+1})+(\nabla p^n,\hat{\bu}^{n+1})-\frac{\tau}{2}(\Vert\nabla p^{n+1}\Vert^2-\Vert\nabla p^n\Vert^2)\\
			\textstyle=&\textstyle\frac{\Vert\hat{\bu}^{n+1}-\bu^n\Vert^2}{2\tau}+\mathcal{Q}^{n+1}b(\bu^n,\bu^n,\hat{\bu}^{n+1}).
		\end{aligned}
	\end{equation}
	This is accomplished by estimating $|\mathcal{Q}^{n+1}-1|$, taking into account that the exact value of $\mathcal{Q}$ is 1, $\mathcal{Q}^{n+1}>0$ and $\mathcal{Q}^n>0$. Then 
	\begin{equation}\label{Q:err:mid}
		\begin{aligned}
			\textstyle\theta\frac{e_Q^{n+1}-e_Q^n}{\tau}=&\textstyle\textstyle\frac{1}{\mathcal{Q}^{n+1}+\mathcal{Q}^n}\left(\frac{\Vert\hat{\bu}^{n+1}-\bu^n\Vert^2}{2\tau}+\mathcal{Q}^{n+1}b(\bu^n,\bu^n,\hat{\bu}^{n+1})\right).
			%\\
			%\textstyle\leq&\frac{1}{\mathcal{Q}^n}\frac{\Vert\hat{e}_{\bu}^{n+1}-%e_{\bu}^n\Vert^2+\Vert\bu(t_{n+1})-\bu(t_n)\Vert^2}%{\tau}+\frac{\mathcal{Q}^{n+1}}{\mathcal{Q}^n}b(\bu^n,\bu^n,\hat{\bu}^{n+1}).
		\end{aligned}
	\end{equation}
	Since we assume \eqref{u:Q:err} is valid for all $t=t_1, \cdots, t_n$ and $\tau\leq\frac{1}{(\frac{C_6C_8}{\nu}+C_9)^2}\leq\frac{1}{(C_6C_8+C_9)^2}$, we have $|e_Q^{n}|\leq (C_6C_8+C_9)\tau^2\leq\frac{\tau}{C_6C_8+C_9}$. By assume  $\tau\leq\frac{C_6C_8+C_9}{2}$, we have ${\frac12\leq\mathcal{Q}^n\leq\frac32}$. Taking $L^2$ inner product of \eqref{Q:err:mid} with $e_Q^{n+1}$, we obtain
	\begin{equation}\label{Q:err}
		\begin{aligned}
			\textstyle\theta\frac{|e_Q^{n+1}|^2-|e_Q^n|^2+|e_Q^{n+1}-e_Q^n|^2}{2\tau}\leq
			&\textstyle~2|e_Q^{n+1}|\left(\frac{\Vert\hat{e}_{\bu}^{n+1}-e_{\bu}^n\Vert^2+\Vert\bu(t_{n+1})-\bu(t_n)\Vert^2}{\tau}+\frac{\mathcal{Q}^{n+1}}{\mathcal{Q}^n}b(\bu^n,\bu^n,\hat{\bu}^{n+1})\right)\\
			\textstyle\leq
			&\textstyle~2|e_Q^{n+1}|\frac{\Vert\hat{e}_{\bu}^{n+1}-e_{\bu}^n\Vert^2+\Vert\bu(t_{n+1})-\bu(t_n)\Vert^2}{\tau}+2C_0|e_Q^{n+1}b(\bu^n,\bu^n,\hat{\bu}^{n+1})|\\
			\textstyle\leq
			&\textstyle~|e_Q^{n+1}|^2+2|e_Q^{n+1}|\frac{\Vert\hat{e}_{\bu}^{n+1}-e_{\bu}^n\Vert^2}{\tau}+\tau\int_{t_n}^{t_{n+1}}\Vert\bu_t\Vert^4dt+2C_0|e_Q^{n+1}b(\bu^n,\bu^n,\hat{\bu}^{n+1})|,
		\end{aligned}
	\end{equation}
	Using the properties of $b(\cdot,\cdot,\cdot)$ in \eqref{skew-C2}, we find
	\begin{equation}\label{Q:b}
		\begin{aligned}
			&\textstyle2C_0|e_Q^{n+1}b(\bu^n,\bu^n,\hat{\bu}^{n+1})|\\
			\textstyle=
			&\textstyle~2C_0|e_Q^{n+1}b(\bu(t_n)-e_{\bu}^n,\bu(t_n)-e_{\bu}^n,\bu(t_{n+1})-\hat{e}_{\bu}^{n+1})|\\
			\textstyle\leq
			&\textstyle~2C_0|e_Q^{n+1}\left(b(\bu(t_n),\bu(t_n),\bu(t_{n+1})-\bu(t_n))-b(\bu(t_n),\bu(t_n),\hat{e}_{\bu}^{n+1})-b(e_{\bu}^n,\bu(t_n),\bu(t_{n+1}))\right)|\\
			\textstyle
			&\textstyle~+2C_0|e_Q^{n+1}\left(b(\bu(t_n),e_{\bu}^n,\bu(t_{n+1}))-b(e_{\bu}^n,e_{\bu}^n,\bu(t_{n+1}))-b(e_{\bu}^n,\bu(t_n),\hat{e}_{\bu}^{n+1})\right)|\\
			&\textstyle+2C_0|e_Q^{n+1}\left(b(\bu(t_n),e_{\bu}^n,\hat{e}_{\bu}^{n+1})-b(e_{\bu}^n,e_{\bu}^n,\hat{e}_{\bu}^{n+1})\right)|\\
			\textstyle\leq
			&\textstyle~ C_0^2C_2^2\left(M^4(1+(1+C_1)^2\frac{192}{\theta\nu}+M^2(1+C_1)^4(1+\frac{128}{\theta\nu}\Vert\nabla\hat{e}_{\bu}^n\Vert^2))+\frac{64}{\theta\nu}\Vert\nabla\hat{e}_{\bu}^n\Vert^4\right)|e_Q^{n+1}|^2\\
			&\textstyle~+\tau\int_{t_n}^{t_{n+1}}\Vert\bu_t\Vert^2dt +\frac{\theta\nu}{16}\Vert\nabla\hat{e}_{\bu}^{n+1}\Vert^2+\frac{\theta\nu}{32}\Vert\nabla\hat{e}_{\bu}^n\Vert^2+\Vert\nabla\hat{e}_{\bu}^n\Vert^4.
		\end{aligned}
	\end{equation} 
	Since $\mathcal{Q}^n\leq C_0$ and $\mathcal{Q}(t)=0$, it follows that $|e_Q^n|\leq1+C_0$.  Under this bound, we may assume $\theta\geq 8(1+C_0)$. Based on the previous assumption, we can conclude that $\Vert\nabla\hat{e}_u^n\Vert^2\leq C_n$.
	
	Combining \eqref{Q:b}  with \eqref{Q:err}, we have
	\begin{equation}\label{Q:error}
		\begin{aligned}
			\textstyle\frac{|e_Q^{n+1}|^2-|e_Q^n|^2+|e_Q^{n+1}-e_Q^n|^2}{2\tau}\leq&\textstyle C_3|e_Q^{n+1}|^2+\frac14\frac{\Vert\hat{e}_{\bu}^{n+1}-e_{\bu}^n\Vert^2}{\tau}+\frac{2\tau}{\theta}\int_{t_n}^{t_{n+1}}\Vert\bu_t\Vert^4dt\\
			&\textstyle+\frac{\tau}{\theta}\int_{t_n}^{t_{n+1}}\Vert\bu_t\Vert^2dt +\frac{\nu}{16}\Vert\nabla\hat{e}_{\bu}^{n+1}\Vert^2+\frac{\nu}{32}\Vert\nabla\hat{e}_{\bu}^n\Vert^2+\frac{1}{\theta}\Vert\nabla\hat{e}_{\bu}^n\Vert^4,
		\end{aligned}
	\end{equation}
	where 
	\begin{equation*}
		\textstyle C_3=\frac{C_0^2C_2^2}{\theta}\left(M^4(1+(1+C_1)^2\frac{192}{\theta\nu}+M^2(1+C_1)^4(1+\frac{128}{\theta\nu}C_n))+\frac{64}{\theta\nu}C_n^2\right)+\frac{1}{\theta}.
	\end{equation*}
	Summing \eqref{Q:error} over $n=0,\cdots,k$ and multiplying $2\tau$ on the both sides, we obtain
	\begin{equation}\label{Q:error:sum}
		\begin{aligned}
			\textstyle|e_Q^{k+1}|^2+\sum\limits_{i=0}^k|e_Q^{i+1}-e_Q^i|^2\leq
			&\textstyle2\sum\limits_{i=0}^kC_3\tau|e_Q^{i+1}|^2+\sum\limits_{i=0}^k\frac{\Vert\hat{e}_{\bu}^{i+1}-e_{\bu}^i\Vert^2}{4\tau}+\frac{4\tau^2}{\theta}\int_{0}^{t_{k+1}}\Vert\bu_t\Vert^4dt\\
			&\textstyle+\frac{2\tau^2}{\theta}\int_{0}^{t_{k+1}}\Vert\bu_t\Vert^2dt +\frac{3\nu}{16}\tau\sum\limits_{i=0}^k\Vert\nabla\hat{e}_{\bu}^{i+1}\Vert^2+\frac{2\tau}{\theta}\sum\limits_{i=0}^k\Vert\nabla\hat{e}_{\bu}^i\Vert^4.
		\end{aligned}
	\end{equation}
	\textbf{Step 3: Simultaneous estimates for the velocity and Lagrange multiplier.} Combining Step 1 and Step 2 (let $k=n$ in \eqref{Q:error:sum}),  we obtain
	\begin{equation}\label{velo:lagrange:err}
		\begin{aligned}
			&\textstyle\Vert e_{\bu}^{n+1}\Vert^2+\frac{1}{2}\sum\limits_{i=0}^n\Vert\hat{e}_{\bu}^{i+1}-e_{\bu}^i\Vert^2+|e_Q^{n+1}|^2+\sum\limits_{i=0}^n|e_Q^{i+1}-e_Q^i|^2+\tau^2\Vert\nabla e_p^{n+1}\Vert^2+\frac{13}{8}\nu\tau\sum\limits_{i=0}^n\Vert\nabla\hat{e}_{\bu}^{i+1}\Vert^2\\
			\textstyle\leq
			&\textstyle~\frac{128(1+C_1)^2C_2^2}{\nu}\tau^2\int_{0}^{t_{n+1}}\Vert\bu_t\Vert^2dt+\frac{128(1+C_1)^2 C_0^2C_2^2M^2}{\nu}\tau\sum\limits_{i=0}^n\Vert e_{\bu}^i\Vert^2\\
			\textstyle
			&\textstyle~+\sum\limits_{i=0}^n(\frac{32}{\nu}C_2^2(1+C_1)^2M^4+2C_3)\tau|e_Q^{i+1}|^2+\frac{16(1+C_1)^2}{\nu}\tau^2\int_{0}^{t_{n+1}}\Vert\bu_{tt}\Vert_{-1}^2dt\\
			&\textstyle~+2\tau^3\sum\limits_{i=0}^n\Vert\nabla e_p^i\Vert^2+(2+\tau)\tau^2\int_{0}^{t_{n+1}}\Vert\nabla p_t\Vert^2dt\\
			\textstyle
			&\textstyle~+\frac{2\tau^2}{\theta}\int_{0}^{t_{n+1}}(2\Vert\bu_t\Vert^4+\Vert\bu_t\Vert^2)dt +\left(\frac{32(1+C_1)^6 C_0^2C_2^2}{\nu}+\frac{2}{\theta}\right)\tau\sum\limits_{i=0}^n\Vert\nabla\hat{e}_{\bu}^i\Vert^4.
		\end{aligned}
	\end{equation}
	Applying the discrete Gr\"onwall's inequality, with $a_n=\Vert e_{\bu}^{n+1}\Vert^2+|e_Q^{n+1}|^2+\tau^2\Vert\nabla e_p^{n+1}\Vert^2$ for $0\leq n\leq N$, we derive
	\begin{equation}\label{err:Gronwall}
		\begin{aligned}
			&\textstyle\Vert e_{\bu}^{n+1}\Vert^2+\frac12\sum\limits_{i=0}^n\Vert\hat{e}_{\bu}^{i+1}-e_{\bu}^i\Vert^2+|e_Q^{n+1}|^2+\tau^2\Vert\nabla e_p^{n+1}\Vert^2+\frac{13}{8}\nu\tau\sum\limits_{i=0}^n\Vert\nabla\hat{e}_{\bu}^{i+1}\Vert^2\\
			\textstyle
			\leq&\textstyle
			~C_6 \left(\frac{128(1+C_1)^2C_2^2}{\nu}\tau^2\int_{0}^{T}\Vert\bu_t\Vert^2dt+\frac{16(1+C_1)^2}{\nu}\tau^2\int_{0}^{T}\Vert\bu_{tt}\Vert_{-1}^2dt+ C_7\tau\sum\limits_{i=0}^n\Vert\nabla\hat{e}_{\bu}^i\Vert^4
			\right.\\
			&\textstyle\left.~+(2+\tau)\tau^2\int_{0}^{T}\Vert\nabla p_t\Vert^2dt+\frac{4}{\theta\tau}\sum\limits_{i=0}^n\Vert\hat{e}_{\bu}^{i+1}-e_{\bu}^i\Vert^4+\frac{2\tau^2}{\theta}\int_{0}^{T}(2\Vert\bu_t\Vert^4+\Vert\bu_t\Vert^2)dt\right)\\
			\textstyle
			\textstyle\leq&\textstyle~C_6\left( (\frac{128(1+C_1)^2C_2^2M}{\nu}+\frac{16(1+C_1)^2M}{\nu}+(2+\tau)\int_{0}^{T}\Vert\nabla p_t\Vert^2dt+\frac{6M}{\theta})\tau^2+ C_7\tau\sum\limits_{i=0}^n\Vert\nabla\hat{e}_{\bu}^i\Vert^4\right)\\
			\textstyle\leq&\textstyle~C_6C_8\tau^2+ C_6C_7\tau\sum\limits_{i=0}^n\Vert\nabla\hat{e}_{\bu}^i\Vert^4,
		\end{aligned}
	\end{equation}
	where
	\begin{equation*}
		\begin{aligned}
			&\textstyle C_6=\exp\left(\tau\sum\limits_{i=0}^n(\frac{128(1+C_1)^2 C_0^2C_2^2M^2}{\nu}+\frac{32}{\nu}C_2^2(1+C_1)^2M^4+2C_3+2)\right),\\
			&\textstyle C_7=\frac{32(1+C_1)^6 C_0^2C_2^2}{\nu}+\frac{2}{\theta},\\
			&\textstyle C_8=\frac{128(1+C_1)^2C_2^2M}{\nu} +\frac{16(1+C_1)^2M}{\nu}+(2+\tau)\int_{0}^{T}\Vert\nabla p_t\Vert^2dt+\frac{6M}{\theta}.
		\end{aligned}
	\end{equation*}
	\textbf{Step 4:} Estimates for $\sum\limits_{i=0}^n\Vert\nabla\hat{e}_{\bu}^n\Vert^4$ by induction. Let $C_9=\frac{C_6C_7}{\nu}$ it follows from \eqref{err:Gronwall} and $\text{poincar\'{e}}$ inequality that\\
	\begin{equation*}
		\textstyle\nu\tau\sum\limits_{i=0}^n\Vert\nabla\hat{e}_{\bu}^{i+1}\Vert^2\leq\frac{13}{8}\nu\tau\sum\limits_{i=0}^n\Vert\nabla\hat{e}_{\bu}^{i+1}\Vert^2\leq C_6C_8\tau^2+ C_6C_7\tau(\sum\limits_{i=0}^n\Vert\nabla\hat{e}_{\bu}^i\Vert^2)^2, 
	\end{equation*}
	which leads to
	\begin{equation}\label{nabla:e^4}
		\textstyle\sum\limits_{i=0}^{n+1}\Vert\nabla\hat{e}_{\bu}^{i}\Vert^2\leq [\frac{C_6C_8}{\nu}+ \frac{C_9}{\tau}(\sum\limits_{i=0}^{n}\Vert\nabla\hat{e}_{\bu}^i\Vert^2)^2]\tau,
	\end{equation}
	Using \eqref{nabla:e^4}, we will prove by induction that, for sufficiently small $\tau$, the following holds for $0\leq n\leq N$:
	\begin{equation}\label{nabla:e^4:alpha}
		\textstyle\sum\limits_{i=0}^{n+1}\Vert\nabla\hat{e}_{\bu}^i\Vert^2\leq\left(\frac{C_6C_8}{\nu}+ C_9\right)\tau:=\alpha\tau,
	\end{equation}
	Since $ \hat e_{\bu}^0=e_{\bu}^0$ and $e_{\bu}^0=0$, we can obtain $\textstyle\Vert\nabla\hat{e}_{\bu}^1\Vert^2\leq\frac{C_6C_8}{\nu}\tau\leq\left(\frac{C_6C_8}{\nu}+ C_9\right)\tau$ from \eqref{nabla:e^4}. Clearly, \eqref{nabla:e^4:alpha} is verified for $n=0$. Suppose it is true for some nonnegative integer $n < N$. In light of \eqref{nabla:e^4}, for $\tau$ satisfying
	\begin{equation*}\label{tau:bound1}
		\tau\leq\frac{1}{\alpha^2}=\frac{1}{(\frac{C_6C_8}{\nu}+ C_9)^2},
	\end{equation*}
	we find that
	\begin{equation*}
		\textstyle\sum\limits_{i=0}^{n+1}\Vert\nabla\hat{e}_{\bu}^i\Vert^2\leq(\frac{C_6C_8}{\nu}+ C_9\alpha^2\tau)\tau\leq(\frac{C_6C_8}{\nu}+ C_9)\tau=\alpha\tau,
	\end{equation*}
	which verifies the statement \eqref{nabla:e^4:alpha}, provided that $\tau\leq\frac{1}{\alpha^2}$. Therefore, we have the following estimate
	\begin{equation}\label{e^4:bound}
		\textstyle C_6C_7\tau\sum\limits_{i=0}^n\Vert\nabla\hat{e}_{\bu}^i\Vert^4\leq C_9\tau(\sum\limits_{i=0}^{n}\Vert\nabla\hat{e}_{\bu}^i\Vert^2)^2\leq C_9\alpha^2\tau^3\leq C_9\tau^2.
	\end{equation}
	\textbf{Step 5: Optimal convergence rates for the velocity and Lagrange multiplier.}  Combining \eqref{err:Gronwall} and \eqref{e^4:bound}, we end up with
	\begin{equation*}
		\textstyle\Vert e_{\bu}^{n+1}\Vert^2+\sum\limits_{i=0}^n\Vert\hat{e}_{\bu}^{i+1}-e_{\bu}^i\Vert^2+|e_Q^{n+1}|^2+\tau^2\Vert\nabla e_p^{n+1}\Vert^2+\frac{13}{8}\nu\tau\sum\limits_{i=0}^n\Vert\nabla\hat{e}_{\bu}^{i+1}\Vert^2\leq (C_6C_8+C_9)\tau^2.
	\end{equation*}
	Consequently, we arrive at \eqref{u:Q:err}.
\end{proof}
{\color{blue}
	\begin{remark}
		Since $\hat{\bu}^{n+1}$ is not divergence-free, we have one extra term compared to the analysis in \cite{Doan2025IMA}, that is $ -\frac{1}{2}(\nabla(2p(t_{n+1}) - p^{n+1} - p^n), \hat{e}_{\boldsymbol{u}}^{n+1}) $ in Eq. \eqref{err:velo:add}. It can be further estimated by Eq. \eqref{err:p}.
	\end{remark}
}
\subsection{Optimal error estimates for the pressure}
The main result in this subsection is the following optimal $l^2(0,T;L^2(\Omega))$ error estimate for the pressure.% which requires additional regularities.
\begin{thm}\label{pressure estimate}
	Assuming $\bu\in H^3(0,T;L^2(\Omega))H^3(0,T;L^2(\Omega))\bigcap H^1(0,T;H_0^2(\Omega))\bigcap W^{1, \infty }( 0, T; W^{1, \infty }( \Omega ) )$, $p\in H^2( 0, T; H^1( \Omega ) )$, then the first-order scheme \eqref{discretization:1or} hold for all $\tau\leq \max\{\frac{3\nu^2}{256C_0^2C_2^2(1+C_1)^6},\tau_0\}$ and $\theta\geq\max\{8(1+C_0),4C_2(1+C_1)\sqrt{\frac{C_5}{\nu}(M^4+(1+C_1)^4C)}\}$:
	\begin{equation}
		\textstyle\tau\sum\limits_{i=0}^n\Vert e_p^{i}\Vert_{L^2(\Omega)/R}^2\leq C\tau^2,\quad\forall\:0\leq n\leq N,
	\end{equation}
	where $C$ is a positive constant independent of $\tau$.
\end{thm}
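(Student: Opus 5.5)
The plan is to get the pressure error from the continuous inf-sup (LBB) condition,
\[
\Vert q\Vert_{L^2(\Omega)/\mathbb{R}}\leq \beta^{-1}\sup_{\bv\in \boldsymbol{H}_0^1(\Omega)}\frac{(q,\nabla\cdot\bv)}{\Vert\nabla\bv\Vert},
\]
applied to the velocity error equation rewritten so that $\nabla e_p^{n}$ appears alone. Adding \eqref{err:mom:grad} and \eqref{err:proj:grad} eliminates $\hat e_{\bu}^{n+1}$ from the time difference. In the form \eqref{err:mom:grad} the pressure enters as $\nabla(p(t_{n+1})-p^n)=\nabla e_p^n+\nabla(p(t_{n+1})-p(t_n))$. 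This gives, for every $\bv\in\boldsymbol{H}_0^1(\Omega)$,
\[
(e_p^n,\nabla\cdot\bv)=\Big(\tfrac{e_{\bu}^{n+1}-e_{\bu}^n}{\tau},\bv\Big)+\Big(\tfrac{\hat e_{\bu}^{n+1}-e_{\bu}^{n+1}}{\tau},\bv\Big)+\nu(\nabla\hat e_{\bu}^{n+1},\nabla\bv)+(\text{nonlinear}+\boldsymbol R_{\bu}^{n+1}+\nabla(p(t_{n+1})-p(t_n)),\bv).
\]
The second term is a gradient, $\tau\nabla(p^{n+1}-p^n)$ up to scaling. It is therefore $L^2$-orthogonal to the discretely divergence-free parts, but not to general $\bv$. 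I will instead keep it as $-(e_p^{n+1}-e_p^n-(p(t_{n+1})-p(t_n)),\nabla\cdot\bv)$ and move it to the left-hand side. This produces a shifted combination that I handle by summation, or more simply by bounding $\Vert\nabla(e_p^{n+1}-e_p^n)\Vert$ through $\tau\Vert\nabla(p^{n+1}-p^n)\Vert=\Vert\bu^{n+1}-\hat\bu^{n+1}\Vert$.

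The nonlinear terms $B_1,\dots,B_4$ are bounded exactly as in \eqref{err:B}, now tested against $\bv$. Each is dominated by $\Vert\nabla\bv\Vert$ times
\[
\tau^{1/2}\Big(\int_{t_n}^{t_{n+1}}\Vert\bu_t\Vert_1^2\Big)^{1/2}+\Vert\nabla e_{\bu}^n\Vert+\Vert\nabla e_{\bu}^n\Vert^2+|e_Q^{n+1}|,
\]
using $\mathcal Q^{n+1}\leq C_0$ from Lemma \ref{Q:bound}. The quadratic term is controlled through the smallness condition $\tau\leq 3\nu^2/(256C_0^2C_2^2(1+C_1)^6)$ together with the $\tau$-bound on $\sum\Vert\nabla\hat e_{\bu}^i\Vert^2$ from Step 4 of Theorem \ref{velocity estimate}. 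The truncation term costs $\tau\int\Vert\bu_{tt}\Vert_{-1}^2$, and the pressure increment costs $\tau\int\Vert\nabla p_t\Vert^2$. Squaring, multiplying by $\tau$ and summing, the terms $\tau\sum\Vert\nabla\hat e_{\bu}^{i+1}\Vert^2$, $\tau\sum|e_Q^{i}|^2$ and $\Vert e_{\bu}^i\Vert^2$ are all $O(\tau^2)$ by \eqref{u:Q:err}. The remaining gradient-of-$e_{\bu}^n$ terms need $\Vert\nabla e_{\bu}^n\Vert\leq\Vert\nabla\hat e_{\bu}^n\Vert+\tau\Vert\nabla\nabla(p^n-p^{n-1})\Vert$, which is problematic. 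I would rather use the $\Vert\cdot\Vert_0\Vert\cdot\Vert_2\Vert\cdot\Vert_1$ variants of \eqref{skew-C2}, placing $\Vert e_{\bu}^n\Vert$ in $L^2$ and $\bu(t_n)$ in $H^2$ with $\bv$ in $H^1$, so that only $L^2$ velocity errors appear.

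The main obstacle is the discrete time derivative $\tau^{-1}\Vert e_{\bu}^{n+1}-e_{\bu}^n\Vert_{-1}$. Theorem \ref{velocity estimate} gives only $O(\tau)$ for $\Vert e_{\bu}^n\Vert$, so this term is naively $O(1)$. I would first derive a second energy estimate for the differenced error $\delta e^{n+1}=e_{\bu}^{n+1}-e_{\bu}^n$ by subtracting consecutive error equations and testing with $\delta\hat e_{\bu}^{n+1}$. The goal is
\[
\Vert\delta e^{n+1}\Vert^2+\nu\tau\sum\Vert\nabla\delta\hat e^{i}\Vert^2\leq C\tau^4.
\]
This uses the extra regularity $\bu\in H^3(0,T;L^2)$ and $p\in H^2(0,T;H^1)$, the $W^{1,\infty}$ bound to handle the nonlinear differences, and the analogous differenced $Q$-equation from \eqref{Q:err:mid}. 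The largeness condition on $\theta$ in the statement is what absorbs the coupling terms between $e_Q$ and the velocity. Gr\"onwall (Lemma \ref{discrete Gronwall}) then closes this estimate. Combined with the inf-sup bound, it yields $\tau\sum\Vert e_p^i\Vert^2_{L^2/\mathbb R}\leq C\tau^2$.
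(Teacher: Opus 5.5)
Your plan is essentially the paper's proof. It applies the inf-sup bound to the sum of \eqref{err:mom:grad} and \eqref{err:proj:grad}: your leftover gradient term, moved to the left, cancels exactly against $\nabla(p(t_{n+1})-p(t_n))$ and leaves $\nabla e_p^{n+1}$ alone as in \eqref{p:-1}, so neither summation nor the fallback via $\Vert\bu^{n+1}-\hat{\bu}^{n+1}\Vert$ is needed. It then closes a second energy estimate for the time-differenced error equations by Gr\"onwall, with large $\theta$ absorbing $|d_te_Q^{n+1}|^2$, to get $\Vert e_{\bu}^{n+1}-e_{\bu}^n\Vert=O(\tau^2)$.

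One ingredient you leave implicit is the starting value of that recursion. Lemma \ref{velocity estimate1} only gives $\Vert d_te_{\bu}^1\Vert=O(1)$, so the paper separately proves $\Vert\hat e_{\bu}^1\Vert^2+\nu\tau\Vert\nabla\hat e_{\bu}^1\Vert^2\le C\tau^4$ in \eqref{err:eu1}--\eqref{err:dtep1}. Also, the paper uses the condition $\tau\le 3\nu^2/(256C_0^2C_2^2(1+C_1)^6)$ in the differenced estimate, to absorb the $\tau^2\sum\Vert\nabla d_t\hat e_{\bu}^i\Vert^2$ term, not in the inf-sup step.
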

\begin{proof}
	Our strategy for obtaining optimal pressure error estimates is to first use the inf-sup condition to relate the pressure error to the velocity error and the Lagrange multiplier. The key step is the estimation of the discrete time derivative of the velocity error, which requires suitable regularity assumptions on the exact solution $\bu$. The desired estimate is then concluded by applying the discrete Gr\"onwall inequality (Lemma \ref{discrete Gronwall}).
	
	\textbf{Step 1: Estimates of the pressure.}  Taking notice of the fact that
	\begin{equation}\label{infsup}
		\textstyle\Vert e_p^n\Vert\leq \sup\limits_{ \bv \in H_0^1(\Omega)}\frac{(\nabla e_p^n,\bv)}{\Vert\nabla \bv\Vert}.
	\end{equation}
	Therefore, in order to bound $\Vert e_p^n\Vert $ for $1\leq n\leq N$, it is sufficient to estimate the negative norm $\Vert \nabla e_p^n\Vert _{-1}$. From \eqref{error:proj}, we have
	\begin{equation}\label{p:-1}
		\textstyle\nabla e_{p}^{n+1}=-\frac{e_{\bu}^{n+1}-e_{\bu}^{n}}{\tau}+\nu\Delta \hat{e}_{\bu}^{n+1}-\boldsymbol{R}_{\bu}^{n+1}-(\bu(t_{n+1})\cdot\nabla)\bu(t_{n+1})+\mathcal{Q}^{n+1}(\bu^{n}\cdot\nabla)\bu^{n}.
	\end{equation}
	For any $\bv \in H_0^1(\Omega)$, the $L^2$ inner product of \eqref{p:-1} with $\bv$ leads to
	\begin{equation}\label{p:v}
		\begin{aligned}
			\textstyle(\nabla e_p^{n+1}, \bv) =&\textstyle -\frac{1}{\tau}( e_{\bu}^{n+1}-e_{\bu}^n, \bv)-\nu( \nabla \hat{e}_{\bu}^{n+1}, \nabla \bv)-(\boldsymbol{R}_{\bu}, \bv) \\
			&\textstyle-b( \bu(t_{n+1}), \bu(t_{n+1}), \bv) + \mathcal{Q}^{n+1} b( \bu^n, \bu^n, \bv) \\
			\textstyle
			\leq&~\textstyle \frac{1+C_1}{\tau}\Vert e_{\bu}^{n+1} - e_{\bu}^n\Vert_{-1}\Vert\nabla \bv\Vert + \nu\Vert\nabla \hat{e}_{\bu}^{n+1}\Vert\Vert\nabla \bv\Vert +(1+C_1)\int_{t_n}^{t_{n+1}} \Vert\bu_{tt}\Vert_{-1} dt\Vert\nabla \bv\Vert \\
			&\textstyle + 2C_0C_2(1+C_1) M \int_{t_n}^{t_{n+1}}\Vert\bu_t\Vert dt \Vert\nabla \bv\Vert + 2C_0 C_2(1+C_1) M \Vert e_{\bu}^n\Vert\Vert\nabla \bv\Vert  \\
			&\textstyle+ C_0 C_2(1+C_1)^3\Vert\nabla e_{\bu}^n\Vert^2\Vert\nabla \bv\Vert + C_2 (1+C_1)M^2 |e_Q^{n+1}|\Vert\nabla \bv\Vert.
		\end{aligned}
	\end{equation}
	Combining \eqref{infsup} and \eqref{p:v}, we obtain the following:
	\begin{equation}\label{p:inf-sup}
		\begin{aligned}
			\textstyle\Vert e_p^{n+1}\Vert\leq&~\textstyle\frac{1+C_1}{\tau}\Vert e_{\bu}^{n+1} - e_{\bu}^n\Vert_{-1}+ \nu\Vert\nabla \hat{e}_{\bu}^{n+1}\Vert+ (1+C_1)\int_{t_n}^{t_{n+1}} \Vert\bu_{tt}\Vert_{-1} dt+ 2C_0C_2(1+C_1) M \int_{t_n}^{t_{n+1}}\Vert\bu_t\Vert dt \\
			&\textstyle + 2C_0 C_2(1+C_1) M \Vert e_{\bu}^n\Vert+C_0 C_2(1+C_1)^3\Vert\nabla e_{\bu}^n\Vert^2+ C_2 (1+C_1)M^2 |e_Q^{n+1}|.
		\end{aligned}
	\end{equation}
	Squaring both sides of \eqref{p:inf-sup} and applying the Cauchy-Schwarz inequality, we deduce that
	\begin{equation}\label{p:Cauchy}
		\begin{aligned}
			\textstyle\Vert e_p^{n+1}\Vert^2\leq&~((1+C_1)^2+\nu+(1+C_1)^2+8C_0^2C_2^2(1+C_1)^2+C_0^2C_2^2(1+C_1)^6+ C_2^2(1+C_1)^2M^4)\\
			&\textstyle\times(\frac{1}{\tau^2}\Vert e_{\bu}^{n+1} - e_{\bu}^n\Vert_{-1}^2+ \nu\Vert\nabla \hat{e}_{\bu}^{n+1}\Vert^2+ \tau\int_{t_n}^{t_{n+1}} \Vert\bu_{tt}\Vert_{-1}^2 dt+\tau\int_{t_n}^{t_{n+1}}\Vert\bu_t\Vert^2 dt \\
			&\textstyle\qquad\qquad +\Vert e_{\bu}^n\Vert^2+\Vert\nabla e_{\bu}^n\Vert^4+|e_Q^{n+1}|^2).
		\end{aligned}
	\end{equation}
	Let $\hat{C}=2(1+C_1)^2+\nu+8C_0^2C_2^2(1+C_1)^2+C_0^2C_2^2(1+C_1)^6+ C_2^2(1+C_1)^2M^4$. Changing the index $n$ to $i$ in \eqref{p:Cauchy}, and summing over  from $i = 0$ to $n$  yields
	\begin{equation}\label{p:sum}
		\begin{aligned}
			\textstyle\sum\limits_{i=0}^n\Vert e_p^{i+1}\Vert^2\leq&\textstyle\hat{C}(\frac{1}{\tau^2}\sum\limits_{i=0}^n\Vert e_{\bu}^{i+1} - e_{\bu}^i\Vert_{-1}^2+\nu\sum\limits_{i=0}^n\Vert\nabla \hat{e}_{\bu}^{i+1}\Vert^2+\tau\int_{0}^{t_{n+1}} \Vert\bu_{tt}\Vert_{-1}^2 dt+\tau\int_{0}^{t_{n+1}}\Vert\bu_t\Vert^2 dt\\
			&\textstyle+\sum\limits_{i=0}^n\Vert e_{\bu}^i\Vert^2+\sum\limits_{i=0}^n\Vert\nabla e_{\bu}^i\Vert^4+\sum\limits_{i=0}^n|e_Q^{i+1}|^2).
		\end{aligned}
	\end{equation}
	\textbf{Step 2: Establish an estimate on $\Vert e_{\bu}^{n+1}-e_{\bu}^n\Vert_{-1}$.} Since $\Vert e_{\bu}^{n+1}-e_{\bu}^n\Vert_{-1}\leq\Vert e_{\bu}^{n+1}-e_{\bu}^n\Vert$, we will bound $\Vert e_{\bu}^{n+1}-e_{\bu}^n\Vert$ instead.
	The difference of \eqref{error:proj} between two consecutive time steps is
	\begin{equation}\label{diff:two time:hate}
		\begin{aligned}
			\textstyle\frac{d_{t}\hat{e}_{\bu}^{n+1}-d_{t}e_{\bu}^{n}}{\tau}-\nu\Delta d_{t}\hat{e}_{\bu}^{n+1}+\nabla d_t(p(t_{n+1})-p^n) =&\textstyle-d_{t}\boldsymbol{R}_{\bu}^{n+1}-\frac{(\bu(t_{n+1})\cdot\nabla)\bu(t_{n+1})-(\bu(t_{n})\cdot\nabla)\bu(t_{n})}{\tau} \\
			&\textstyle +\frac{\mathcal{Q}^{n+1}(\bu^{n}\cdot\nabla)\bu^{n}-\mathcal{Q}^{n}(\bu^{n-1}\cdot\nabla)\bu^{n-1}}{\tau}.
		\end{aligned}
	\end{equation}
	where $d_t\phi^n:=\frac{\phi^n-\phi^{n-1}}{\tau}$ is the backward time difference, and
	\begin{equation}\label{diff:two time:e}
		\begin{aligned}
			\textstyle \frac{d_{t}e_{\bu}^{n+1}-d_{t}\hat{e}_{\bu}^{n+1}}{\tau}-\nabla d_t(p^{n+1}-p^n)=0.
		\end{aligned}
	\end{equation}
	Taking the $L^2$ inner product of \eqref{diff:two time:hate} with $d_t\hat{e}_{\bu}^{n+1}$ and \eqref{diff:two time:e} with $\frac{d_t e_{\bu}^{n+1}+d_t\hat{e}_{\bu}^{n+1}}{2}$ gives us
	\begin{equation}\label{diff:hate:L2}
		\begin{aligned}
			&\textstyle\frac{\Vert d_{t}\hat{e}_{\bu}^{n+1}\Vert^2-\Vert d_{t}e_{\bu}^{n}\Vert^2+\Vert d_{t}\hat{e}_{\bu}^{n+1}-d_{t}e_{\bu}^{n}\Vert^2}{2\tau}+\nu\Vert\nabla d_{t}\hat{e}_{\bu}^{n+1}\Vert^2+(\nabla d_t(p(t_{n+1})-p^n),d_{t}\hat{e}_{\bu}^{n+1})\\
			\textstyle=&\textstyle-(d_{t}\boldsymbol{R}_{\bu}^{n+1},d_{t}\hat{e}_{\bu}^{n+1})-\frac{b(\bu(t_{n+1}),\bu(t_{n+1}),d_{t}\hat{e}_{\bu}^{n+1})-b(\bu(t_{n}),\bu(t_{n}),d_{t}\hat{e}_{\bu}^{n+1})}{\tau} \\
			&\textstyle +\frac{\mathcal{Q}^{n+1}b(\bu^{n},\bu^{n},d_{t}\hat{e}_{\bu}^{n+1})-\mathcal{Q}^{n}b(\bu^{n-1},\bu^{n-1},d_{t}\hat{e}_{\bu}^{n+1})}{\tau},
		\end{aligned}
	\end{equation}
	and
	\begin{equation}\label{diff:e:L2}
		\textstyle\frac{\Vert d_{t}e_{\bu}^{n+1}\Vert^2-\Vert d_{t}\hat{e}_{\bu}^{n+1}\Vert^2}{2\tau}-\frac{1}{2}(\nabla d_t(p^{n+1}-p^n), d_{t}\hat{e}_{\bu}^{n+1})=0.
	\end{equation}
	Summing \eqref{diff:hate:L2} and \eqref{diff:e:L2}, we derive
	\begin{equation}\label{dteu}
		\begin{aligned}
			&\textstyle\frac{\Vert d_{t}e_{\bu}^{n+1}\Vert^2-\Vert d_{t}e_{\bu}^n\Vert^2}{2\tau}+\frac{\Vert d_{t}\hat{e}_{\bu}^{n+1}-d_{t}e_{\bu}^{n}\Vert^2}{2\tau}+\nu\Vert\nabla d_{t}\hat{e}_{\bu}^{n+1}\Vert^2\\
			\textstyle=&\textstyle-(d_{t}\boldsymbol{R}_{\bu}^{n+1},d_{t}\hat{e}_{\bu}^{n+1})-\frac12(\nabla d_t(2p(t_{n+1})-p^n-p^{n+1}),d_t\hat{e}_{\bu}^{n+1})\\
			&\textstyle-\frac{b(\bu(t_{n+1}),\bu(t_{n+1}),d_{t}\hat{e}_{\bu}^{n+1})-b(\bu(t_{n}),\bu(t_{n}),d_{t}\hat{e}_{\bu}^{n+1})}{\tau} \\
			&\textstyle+\frac{\mathcal{Q}^{n+1}b(\bu^{n},\bu^{n},d_{t}\hat{e}_{\bu}^{n+1})-\mathcal{Q}^{n}b(\bu^{n-1},\bu^{n-1},d_{t}\hat{e}_{\bu}^{n+1})}{\tau}\\
			\textstyle=&\textstyle-(d_{t}\boldsymbol{R}_{\bu}^{n+1},d_{t}\hat{e}_{\bu}^{n+1})
			-\frac12(\nabla d_t(2p(t_{n+1})-p^n-p^{n+1}),d_t\hat{e}_{\bu}^{n+1})+S_1+S_2,
		\end{aligned}
	\end{equation}
	where $S_1$ and $S_2$ are defined as
	\begin{equation*}
		\begin{aligned}
			\textstyle S_{1}  =&\textstyle-\frac{b(\bu(t_{n+1}),\bu(t_{n+1}),d_t\hat{e}_{\bu}^{n+1})-2b(\bu(t_n),\bu(t_n),d_t\hat{e}_{\bu}^{n+1})+b(\bu(t_{n-1}),\bu(t_{n-1}),d_t\hat{e}_{\bu}^{n+1})}{\tau}, \\
			\textstyle S_{2}  =&\textstyle\frac{\mathcal{Q}^{n+1}b(\bu^n,\bu^n,d_t\hat{e}_{\bu}^{n+1})-b(\bu(t_n),\bu(t_n),d_t\hat{e}_{\bu}^{n+1})}{\tau} \\
			&\qquad\qquad\textstyle -\frac{\mathcal{Q}^nb(\bu^{n-1},\bu^{n-1},d_t\hat{e}_{\bu}^{n+1})-b(\bu(t_{n-1}),\bu(t_{n-1}),d_t\hat{e}_{\bu}^{n+1})}{\tau}.
		\end{aligned}
	\end{equation*}
	First of all, the term  $S_1$ can be reformulated as follows:
	\begin{equation*}
		\begin{aligned}
			\textstyle S_{1}= & \textstyle-\frac{b(\bu(t_{n+1})-2\bu(t_n)+\bu(t_{n-1}),\bu(t_{n-1}),d_t\hat{e}_{\bu}^{n+1})}{\tau}-\frac{b(\bu(t_n),\bu(t_{n+1})-2\bu(t_n)+\bu(t_{n-1}),d_t\hat{e}_{\bu}^{n+1})}{\tau} \\
			&\textstyle -\frac{b(\bu(t_{n+1})-\bu(t_n),\bu(t_{n+1})-\bu(t_{n-1}),d_t\hat{e}_{\bu}^{n+1})}{\tau}.
		\end{aligned}
	\end{equation*}
	Using the properties of $b(\cdot,\cdot,\cdot)$ in \eqref{skew-C2} and the regularity assumption, we find
	\begin{equation}\label{S_1}
		\begin{aligned}
			\textstyle S_{1}
			\leq
			&~\textstyle\frac{C_2}{\tau}\Vert \bu(t_{n+1})-2\bu(t_n)+\bu(t_{n-1})\Vert _0\Vert \bu(t_{n-1})\Vert _2\Vert d_t\hat{e}_{\bu}^{n+1}\Vert _1 \\
			&\textstyle+\frac{C_2}{\tau}\Vert \bu(t_n)\Vert _2\Vert \bu(t_{n+1})-2\bu(t_n)+\bu(t_{n-1})\Vert _0\Vert d_t\hat{e}_{\bu}^{n+1}\Vert _1 \\
			&\textstyle+\frac{C_2}{\tau}\Vert \bu(t_{n+1})-\bu(t_n)\Vert _1\Vert \bu(t_{n+1})-\bu(t_{n-1})\Vert _1\Vert d_t\hat{e}_{\bu}^{n+1}\Vert _1 \\
			\textstyle\leq&~\textstyle(1+C_1)\left(2C_2M\int_{t_{n-1}}^{t_{n+1}}\Vert \bu_{tt}\Vert _0dt+2C_2\int_{t_{n-1}}^{t_{n+1}}\Vert \bu_t\Vert _1^2dt\right)\Vert \nabla d_t\hat{e}_{\bu}^{n+1}\Vert .
		\end{aligned}
	\end{equation}
	Secondly, $S_2$ can be expressed as
	\begin{equation*}
		\begin{aligned}
			\textstyle S_2 =&~\textstyle d_t e_Q^{n+1} b(\bu^n, \bu^n, d_t\hat{e}_{\bu}^{n+1})+e_Q^{n}\frac{b(\bu(t_n), \bu(t_n) - \bu(t_{n-1}), d_t \hat{e}_{\bu}^{n+1}) + b(\bu(t_n) - \bu(t_{n-1}), \bu(t_{n-1}), d_t\hat{e}_{\bu}^{n+1})}{\tau}\\
			&\textstyle+\mathcal{Q}^n \frac{b(e^{n-1}_{\bu}, \bu(t_n) - \bu(t_{n-1}), d_t\hat{e}^{n+1}_{\bu}) + b(\bu(t_n) - \bu(t_{n-1}), e^{n}_{\bu}, d_t\hat{e}^{n+1}_{\bu})}{\tau}\\
			&\textstyle+\mathcal{Q}^n \left[ b(d_t e^{n}_{\bu}, \bu(t_n), d_t\hat{e}^{n+1}_{\bu}) + b(\bu(t_{n-1}), d_t e^{n}_{\bu}, d_t\hat{e}^{n+1}_{\bu}) \right]\\
			&\textstyle+\mathcal{Q}^n \left[ b(d_t e^{n}_{\bu}, e^{n-1}_{\bu}, d_t\hat{e}^{n+1}_{\bu}) + b(e^{n}_{\bu}, d_t e^{n}_{\bu}, d_t\hat{e}^{n+1}_{\bu}) \right].
		\end{aligned}
	\end{equation*}
	Similarly, applying the properties of the trilinear form $b(\cdot,\cdot,\cdot)$ specified in \eqref{skew-C2}, we obtain:
	\begin{equation}\label{S_2}
		\begin{aligned}
			\textstyle S_2  \leq&~\textstyle C_2|d_te_Q^{n+1}|\Vert \bu^n\Vert _1^2\Vert d_t\hat{e}_{\bu}^{n+1}\Vert _1+\frac{2C_2M}\tau|e_Q^n|\left(\int_{t_{n-1}}^{t_n}\Vert \bu_t\Vert _0dt\right)\Vert d_t\hat{e}_{\bu}^{n+1}\Vert _1 \\
			&\textstyle+\frac{C_0C_2}\tau\left(\Vert e_{\bu}^{n-1}\Vert _0+\Vert e_{\bu}^n\Vert _0\right)\left(\int_{t_{n-1}}^{t_n}\Vert \bu_t\Vert _2dt\right)\Vert d_t\hat{e}_{\bu}^{n+1}\Vert _1 \\
			&\textstyle +2C_0C_2M\Vert d_te_{\bu}^n\Vert _0\Vert d_t\hat{e}_{\bu}^{n+1}\Vert _1+C_0C_2\left(\Vert e_{\bu}^{n-1}\Vert _1+\Vert e_{\bu}^n\Vert _1\right)\Vert d_te_{\bu}^n\Vert _1\Vert d_te_{\bu}^{n+1}\Vert _1\\
			\textstyle\leq&~\textstyle C_2(1+C_1)|d_te_Q^{n+1}|\left(\Vert \bu(t_n)\Vert _1^2+(1+C_1)^2\Vert\nabla\hat{e}_{\bu}^n\Vert^2\right)\Vert\nabla d_t\hat{e}_{\bu}^{n+1}\Vert\\
			&\textstyle+\frac{2C_2(1+C_1)M}\tau|e_Q^n|\left(\int_{t_{n-1}}^{t_n}\Vert \bu_t\Vert _0dt\right)\Vert\nabla d_t\hat{e}_{\bu}^{n+1}\Vert +2C_0C_2(1+C_1)M\Vert d_te_{\bu}^n\Vert _0\Vert\nabla d_t\hat{e}_{\bu}^{n+1}\Vert\\
			&\textstyle +\frac{C_0C_2(1+C_1)}\tau\left(\Vert e_{\bu}^{n-1}\Vert _0+\Vert e_{\bu}^n\Vert _0\right)\left(\int_{t_{n-1}}^{t_n}\Vert \bu_t\Vert _2dt\right)\Vert\nabla d_t\hat{e}_{\bu}^{n+1}\Vert  \\
			&\textstyle +C_0C_2(1+C_1)^3\left(\Vert\nabla e_{\bu}^{n-1}\Vert+\Vert\nabla e_{\bu}^n\Vert\right)\Vert\nabla d_t\hat{e}_{\bu}^n\Vert \Vert\nabla d_t\hat{e}_{\bu}^{n+1}\Vert.
		\end{aligned}  
	\end{equation}
	By noting that
	\begin{equation*}
		\begin{aligned}
			\textstyle d_t\boldsymbol{R}_{\bu}^{n+1} &\textstyle =\frac{1}{\tau^2}\left[\int_{t_n}^{t_{n+1}}(t-t_n)\bu_{tt}(t)dt-\int_{t_{n-1}}^{t_n}(t-t_{n-1})\bu_{tt}(t)dt\right] \\
			&\textstyle =\frac{1}{\tau^2}\int_{t_{n-1}}^{t_n}\int_r^{r+\tau}\int_s^{r+\tau}\bu_{ttt}(\xi)d\xi dsdr,
		\end{aligned}
	\end{equation*}
	we obtain
	\begin{equation}\label{dtRu}
		\textstyle(d_t\boldsymbol{R}_{\bu}^{n+1},d_t\hat{e}_{\bu}^{n+1})\leq\Vert d_t\boldsymbol{R}_{\bu}^{n+1}\Vert _{-1}\Vert d_t\hat{e}_{\bu}^{n+1}\Vert _1\leq(1+C_1)\left(\int_{t_{n-1}}^{t_{n+1}}\Vert \bu_{ttt}\Vert _{-1}dt\right)\Vert \nabla d_t\hat{e}_{\bu}^{n+1}\Vert .
	\end{equation}
	The second term on the right-hand side of \eqref{dteu} can be bounded by 
	\begin{equation}\label{dtep}
		\begin{aligned}
			&\textstyle-\frac12(\nabla d_t(2p(t_{n+1})-p^n-p^{n+1}),d_t\hat{e}_{\bu}^{n+1})\\
			\textstyle=&\textstyle\frac{\tau}{2}(\nabla d_t(e_p^{n+1}+p(t_{n+1})-p(t_n)+e_p^n),\nabla d_t(-e_p^{n+1}+p(t_{n+1})-p(t_n)+e_p^n))\\
			\textstyle=&\textstyle-\frac{\tau}{2}(\Vert\nabla d_te_p^{n+1}\Vert^2-\Vert\nabla d_te_p^n\Vert^2)+\tau(\nabla d_te_p^n,\nabla d_t(p(t_{n+1})-p(t_n)))+\frac{\tau}{2}\Vert\nabla d_t(p(t_{n+1})-p(t_n))\Vert^2\\
			\textstyle\leq&\textstyle-\frac{\tau}{2}(\Vert\nabla d_te_p^{n+1}\Vert^2-\Vert\nabla d_te_p^n\Vert^2)+\tau^2\Vert\nabla d_te_p^n\Vert^2+(\frac{\tau}{4}+\frac{\tau^2}{2})\int_{t_n}^{t_{n+1}}\Vert\nabla p_{tt}\Vert^2dt.
		\end{aligned}
	\end{equation}
	Combining  \eqref{S_1}-\eqref{dtep} with \eqref{dteu}, we obtain
	\begin{equation}\label{dteu:end}
		\begin{aligned}
			&\textstyle\frac{\Vert d_{t}e_{\bu}^{n+1}\Vert^2-\Vert d_{t}e_{\bu}^n\Vert^2}{2\tau}+\frac{\Vert d_{t}\hat{e}_{\bu}^{n+1}-d_{t}e_{\bu}^{n}\Vert^2}{2\tau}+\nu\Vert\nabla d_{t}\hat{e}_{\bu}^{n+1}\Vert^2+\frac{\tau}{2}(\Vert\nabla d_te_p^{n+1}\Vert^2-\Vert\nabla d_te_p^{n}\Vert^2)\\
			\textstyle\leq&\textstyle(1+C_1)\left(\int_{t_{n-1}}^{t_{n+1}}\Vert \bu_{ttt}\Vert _{-1}dt+2C_2M\int_{t_{n-1}}^{t_{n+1}}\Vert \bu_{tt}\Vert _0dt+2C_2\int_{t_{n-1}}^{t_{n+1}}\Vert \bu_t\Vert _1^2dt\right)\Vert \nabla d_t\hat{e}_{\bu}^{n+1}\Vert+\tau^2\Vert\nabla d_te_p^{n}\Vert^2\\
			&\textstyle+C_2(1+C_1)|d_te_Q^{n+1}|(\Vert \bu(t_n)\Vert _1^2+(1+C_1)^2\Vert\nabla\hat{e}_{\bu}^n\Vert^2)\Vert\nabla d_t\hat{e}_{\bu}^{n+1}\Vert+\frac{2C_2(1+C_1)M}\tau|e_Q^n|\left(\int_{t_{n-1}}^{t_n}\Vert \bu_t\Vert _0dt\right)\Vert\nabla d_t\hat{e}_{\bu}^{n+1}\Vert \\
			&\textstyle +\frac{C_0C_2(1+C_1)}\tau(\Vert e_{\bu}^{n-1}\Vert _0+\Vert e_{\bu}^n\Vert _0)\left(\int_{t_{n-1}}^{t_n}\Vert \bu_t\Vert _2dt\right)\Vert\nabla d_t\hat{e}_{\bu}^{n+1}\Vert+\frac{1+2\tau}{4}\tau\int_{t_n}^{t_{n+1}}\Vert\nabla p_{tt}\Vert^2dt  \\
			&\textstyle +2C_0C_2(1+C_1)M\Vert d_te_{\bu}^n\Vert _0\Vert\nabla d_t\hat{e}_{\bu}^{n+1}\Vert+C_0C_2(1+C_1)^3(\Vert\nabla e_{\bu}^{n-1}\Vert+\Vert\nabla e_{\bu}^n\Vert)\Vert\nabla d_t\hat{e}_{\bu}^n\Vert \Vert\nabla d_t\hat{e}_{\bu}^{n+1}\Vert\\
			\textstyle\leq&\textstyle\frac{\nu}{8}\Vert\nabla d_t\hat{e}_{\bu}^{n+1}\Vert^2+\frac{8}{\nu}(1+C_1)^2\left(\tau\int_{t_{n-1}}^{t_{n+1}}\Vert \bu_{ttt}\Vert _{-1}^2dt+4C_2^2M^2\tau\int_{t_{n-1}}^{t_{n+1}}\Vert \bu_{tt}\Vert^2dt+4C_2^2\tau\int_{t_{n-1}}^{t_{n+1}}\Vert \bu_t\Vert_1^4dt\right)+\tau^2\Vert\nabla d_te_p^{n}\Vert^2\\
			&\textstyle+\frac{16}{\nu}C_2^2(1+C_1)^2(M^4+(1+C_1)^4\Vert\nabla\hat{e}_{\bu}^n\Vert^4)|d_te_Q^{n+1}|^2+\frac{32C_2^2(1+C_1)^2M^2}{\nu\tau}\left(\int_{t_{n-1}}^{t_n}\Vert \bu_t\Vert^2dt\right)|e_Q^n|^2\\
			&\textstyle+\frac{16C_0^2C_2^2(1+C_1)^2}{\nu\tau}(\Vert e_{\bu}^{n-1}\Vert^2+\Vert e_{\bu}^n\Vert^2)\left(\int_{t_{n-1}}^{t_n}\Vert \bu_t\Vert _2^2dt\right)+\frac{32C_0^2C_2^2(1+C_1)^2M^2}{\nu}\Vert d_te_{\bu}^n\Vert^2\\
			&\textstyle+\frac{16C_0^2C_2^2(1+C_1)^6}{\nu}(\Vert\nabla e_{\bu}^{n-1}\Vert^2+\Vert\nabla e_{\bu}^n\Vert^2)\Vert\nabla d_t\hat{e}_{\bu}^n\Vert^2+\frac{1+2\tau}{4}\tau\int_{t_n}^{t_{n+1}}\Vert\nabla p_{tt}\Vert^2dt.
		\end{aligned}
	\end{equation}
	Noting that $d_te_Q^{n+1}$ is on the right-hand side of \eqref{dteu:end}, we should estimate it. Observing \eqref{Q:err:orig}, we have
	\begin{equation}\label{dteQ}
		\begin{aligned}
			\textstyle\theta|d_te_Q^{n+1}|\leq&\textstyle\frac{2\Vert\hat{e}_{\bu}^{n+1}-e_{\bu}^n\Vert^2}{\tau}+2\int_{t_n}^{t_{n+1}}\Vert\bu_t\Vert^2dt+2C_0b(\bu^n,\bu^n,\hat{\bu}^{n+1})\\
			\textstyle\leq&\textstyle\frac{2\Vert\hat{e}_{\bu}^{n+1}-e_{\bu}^n\Vert^2}{\tau}+2\int_{t_n}^{t_{n+1}}\Vert\bu_t\Vert^2dt+2C_0C_2M^2\left(\int_{t_n}^{t_{n+1}}\Vert\bu_t\Vert dt\right)\\
			&\textstyle+2C_0C_2M^2(1+C_1)(\Vert\nabla \hat{e}_{\bu}^{n+1}\Vert+2\Vert\nabla \hat{e}_{\bu}^n\Vert)+2C_0C_2(1+C_1)^2M(\Vert\nabla\hat{e}_{\bu}^{n}\Vert+2\Vert\nabla\hat{e}_{\bu}^{n+1}\Vert)\Vert\nabla\hat{e}_{\bu}^n\Vert\\
			&\textstyle+2C_0C_2(1+C_1)^3\Vert\nabla\hat{e}_{\bu}^n\Vert^2\Vert\nabla\hat{e}_{\bu}^{n+1}\Vert.
		\end{aligned}
	\end{equation}
	Similar to \eqref{p:Cauchy}, squaring both sides of \eqref{dteQ} and applying Cauchy-Schwarz inequality, we find
	\begin{equation}
		\begin{aligned}
			\textstyle\theta^2|d_te_Q^{n+1}|^2\leq&\textstyle C_5(\frac{\Vert\hat{e}_{\bu}^{n+1}-e_{\bu}^n\Vert^4}{\tau^2}+\tau\int_{t_n}^{t_{n+1}}\Vert\bu_t\Vert^4dt+\tau\int_{t_n}^{t_{n+1}}\Vert\bu_t\Vert^2 dt+\Vert\nabla \hat{e}_{\bu}^{n+1}\Vert^2\\
			&\textstyle\qquad+\Vert\nabla \hat{e}_{\bu}^n\Vert^2+\Vert\nabla\hat{e}_{\bu}^{n}\Vert^4+\Vert\nabla\hat{e}_{\bu}^{n+1}\Vert^2\Vert\nabla\hat{e}_{\bu}^n\Vert^2+\Vert\nabla\hat{e}_{\bu}^n\Vert^4\Vert\nabla\hat{e}_{\bu}^{n+1}\Vert^2),
		\end{aligned}
	\end{equation}
	where
	\begin{equation*}
		\textstyle C_5=8+4C_0^2C_2^2M^4+20C_0^2C_2^2(1+C_1)^2M^4+20C_0^2C_2^2(1+C_1)^4M^2+4C_0^2C_2^2(1+C_1)^6.
	\end{equation*}
	To estimate the right-hand side of \eqref{dteu:end} and note that $\tau<1$, we observe from Theorem \ref{velocity estimate} that
	\begin{equation}\label{pre:estimate}
		\left\{
		\begin{aligned}
			&\textstyle\tau\sum\limits_{i=0}^n\Vert\hat{e}_{\bu}^{i+1}-e_{\bu}^i\Vert^2\leq C\tau^2,\\
			&\textstyle\tau\sum\limits_{i=0}^n\Vert\nabla \hat{e}_{\bu}^{i+1}\Vert^2\leq C\tau^2,\\
			&\textstyle \max\{\Vert\nabla \hat{e}_{\bu}^n\Vert^2,\Vert\nabla \hat{e}_{\bu}^n\Vert^4\}\leq C,
		\end{aligned}
		\right.
	\end{equation}
	where $\textstyle C=\max\{\frac{C_6C_8}{\nu}+C_9,C_6C_8+C_9,(\frac{C_6C_8}{\nu}+C_9)^2\}$.
	
	Let $\theta^2\geq\frac{16}{\nu}C_2^2(1+C_1)^2C_5(M^4+(1+C_1)^4C)$, then we have 
	\begin{equation}\label{dteQ2}
		\begin{aligned}
			&\textstyle\frac{16}{\nu}C_2^2(1+C_1)^2(M^4+(1+C_1)^4\Vert\nabla\hat{e}_{\bu}^n\Vert^4)|d_te_Q^{n+1}|^2\\
			\textstyle\leq&\textstyle\frac{\Vert\hat{e}_{\bu}^{n+1}-e_{\bu}^n\Vert^4}{\tau^2}+\tau\int_{t_n}^{t_{n+1}}\Vert\bu_t\Vert^4dt+\tau\int_{t_n}^{t_{n+1}}\Vert\bu_t\Vert^2 dt+\Vert\nabla \hat{e}_{\bu}^{n+1}\Vert^2+\Vert\nabla \hat{e}_{\bu}^n\Vert^2+\Vert\nabla\hat{e}_{\bu}^{n}\Vert^4\\
			&\textstyle+\Vert\nabla\hat{e}_{\bu}^{n+1}\Vert^2\Vert\nabla\hat{e}_{\bu}^n\Vert^2+\Vert\nabla\hat{e}_{\bu}^n\Vert^4\Vert\nabla\hat{e}_{\bu}^{n+1}\Vert^2.
		\end{aligned}
	\end{equation}
	Summing \eqref{dteQ2} over $i=0$ to $n$ and collecting \eqref{pre:estimate} yields
	\begin{equation}\label{dteQ:tau^2}
		\textstyle\tau\sum\limits_{i=0}^n\frac{16}{\nu}C_2^2(1+C_1)^2(M^4+(1+C_1)^4\Vert\nabla\hat{e}_{\bu}^i\Vert^4)|d_te_Q^{i+1}|^2\leq 2(C^2\tau+M+2C)\tau^2.
	\end{equation}
	Taking the sum of \eqref{dteu:end} over $i=1,\cdots, n$ and multiplying $2\tau$, then combining \eqref{dteQ:tau^2}, we have
	\begin{equation}\label{dteu:sum}
		\begin{aligned}
			&\textstyle\Vert d_{t}e_{\bu}^{n+1}\Vert^2-\Vert d_{t}e_{\bu}^1\Vert^2+\sum\limits_{i=1}^n\Vert d_{t}\hat{e}_{\bu}^{i+1}-d_{t}e_{\bu}^{i}\Vert^2+\frac{7}{4}\nu\tau\sum\limits_{i=1}^n\Vert\nabla d_{t}\hat{e}_{\bu}^{i+1}\Vert^2+\tau^2(\Vert\nabla d_te_p^{n+1}\Vert^2-\Vert\nabla d_te_p^{1}\Vert^2)\\
			\textstyle\leq&~\textstyle\frac{16}{\nu}(1+C_1)^2\tau^2\left(\int_{0}^{T}\Vert \bu_{ttt}\Vert _{-1}^2dt+4C_2^2M^2\int_{0}^{T}\Vert \bu_{tt}\Vert^2dt+4C_2^2\int_{0}^{T}\Vert \bu_t\Vert_1^4dt\right)+2\tau^3\sum\limits_{i=1}^n\Vert\nabla d_te_p^{i}\Vert^2\\
			&\textstyle+4(C^2\tau+M+2C)\tau^3+\frac{64CC_2^2(1+C_1)^2M^2}{\nu}\tau^2\left(\int_{0}^{T}\Vert \bu_t\Vert^2dt\right)\\
			&\textstyle+\frac{64CC_0^2C_2^2(1+C_1)^2}{\nu}\tau^2\left(\int_{0}^{T}\Vert \bu_t\Vert _2^2dt\right)+\frac{64C_0^2C_2^2(1+C_1)^2M^2}{\nu}\tau\sum\limits_{i=1}^n\Vert d_te_{\bu}^i\Vert^2\\
			&\textstyle+\frac{64CC_0^2C_2^2(1+C_1)^6}{\nu}\tau^2\sum\limits_{i=1}^n\Vert\nabla d_t\hat{e}_{\bu}^i\Vert^2+\frac{1+2\tau}{2}\tau^2\int_{0}^{T}\Vert\nabla p_{tt}\Vert^2dt.
		\end{aligned}
	\end{equation}
	Since $\hat{e}_{\bu}^0=e_{\bu}^0=0$, we have $\tau^2\Vert\nabla d_t\hat{e}_{\bu}^1\Vert^2=\Vert\nabla\hat{e}_{\bu}^1\Vert^2$. Using Gr\"onwall inequality with $a_n=\Vert d_{t}e_{\bu}^{n+1}\Vert^2+\tau^2\Vert\nabla d_te_p^{n+1}\Vert^2$ and let $\tau\leq \frac{3\nu^2}{256C_0^2C_2^2(1+C_1)^6}$, we find
	\begin{equation}\label{dteu:tau:end}
		\begin{aligned}
			&\textstyle\Vert d_{t}e_{\bu}^{n+1}\Vert^2+\sum\limits_{i=1}^n\Vert d_{t}\hat{e}_{\bu}^{i+1}-d_{t}e_{\bu}^{i}\Vert^2+\nu\tau\sum\limits_{i=1}^n\Vert\nabla d_{t}\hat{e}_{\bu}^{i+1}\Vert^2+\tau^2\Vert\nabla d_te_p^{n+1}\Vert^2\\
			\textstyle\leq&\textstyle\Vert d_{t}e_{\bu}^1\Vert^2+\tau^2\Vert\nabla d_te_p^{1}\Vert^2+\frac{3}{4}\nu\tau^2\Vert\nabla d_t\hat{e}_{\bu}^1\Vert^2+C_{10}C_{11}\tau^2\\
			\textstyle\leq&\textstyle\Vert d_{t}e_{\bu}^1\Vert^2+\tau^2\Vert\nabla d_te_p^{1}\Vert^2+\nu\Vert\nabla \hat{e}_{\bu}^1\Vert^2+C_{10}C_{11}\tau^2,
		\end{aligned}
	\end{equation}
	where
	\begin{equation*}
		\begin{aligned}
			\textstyle C_{10}=&\textstyle \exp\left(\frac{64C_0^2C_2^2(1+C_1)^2M^2}{\nu}T+2T\right),\\
			\textstyle C_{11}=&\textstyle\frac{16}{\nu}(1+C_1)^2\left(\int_{0}^{T}\Vert \bu_{ttt}\Vert _{-1}^2dt+4C_2^2M^2\int_{0}^{T}\Vert \bu_{tt}\Vert^2dt+4C_2^2\int_{0}^{T}\Vert \bu_t\Vert_1^4dt\right)+4(C^2\tau+M+2C)\\
			&\textstyle+\frac{64CC_2^2(1+C_1)^2M^2}{\nu}\left(\int_{0}^{T}\Vert \bu_t\Vert^2dt\right)+\frac{64CC_0^2C_2^2(1+C_1)^2}{\nu}\left(\int_{0}^{T}\Vert \bu_t\Vert _2^2dt\right)+\frac{1+2\tau}{2}\int_{0}^{T}\Vert\nabla p_{tt}\Vert^2dt.
		\end{aligned}
	\end{equation*}
	To bound $\Vert d_{t}e_{\bu}^1\Vert^2+\tau^2\Vert\nabla d_te_p^{1}\Vert^2+\frac{3}{4}\nu\tau^2\Vert\nabla d_t\hat{e}_{\bu}^1\Vert^2$ in \eqref{dteu:tau:end}, we let $n = 0$ in \eqref{err:mom:L2} and obtain
	\begin{equation}
		\begin{aligned}
			&\textstyle\Vert\hat{e}_{\bu}^{1}\Vert^2+\nu\tau\Vert\nabla\hat{e}_{\bu}^{1}\Vert^2\\
			\textstyle=&\textstyle-\tau b(\bu(t_{1}),\bu(t_{1}),\hat{e}_{\bu}^{1})+\mathcal{Q}^{1}\tau b(\bm u^0,\bm u^0,\hat{e}_{\bu}^{1})-\tau (\nabla(p(t_{1})-p(t_0)),\hat{e}_{\bu}^{1})-\tau (\boldsymbol{R}_{\bu}^{1},\hat{e}_{\bu}^{1}).
		\end{aligned}
	\end{equation}
	Using similar arguments as in \eqref{err:velo:add:1}-\eqref{err:velo:sum:1} and \eqref{u:Q:err:1}, and combining the regularity of $\bu$ , we deduce
	\begin{equation}\label{err:eu1}
		\begin{aligned}
			&\textstyle\Vert\hat{e}_{\bu}^{1}\Vert^2+\nu\tau\Vert\nabla\hat{e}_{\bu}^{1}\Vert^2\\
			\textstyle\leq&\textstyle\frac{1}{2}\Vert\hat{e}_{\bu}^{1}\Vert^2+2\tau^4\left(\Vert\bu_{tt}\Vert_{L^\infty(0,t_1)}^2+4C_2^2\Vert\nabla\bu_t\Vert_{L^2(0,t_1)}^2+\Vert\nabla p_t\Vert_{L^\infty(0,t_1)}^2\right)+2C_2^2M^4\tau^2|e_Q^1|^2\\
			\textstyle\leq&\textstyle\frac{1}{2}\Vert\hat{e}_{\bu}^{1}\Vert^2+C_{12}\tau^4,
		\end{aligned}
	\end{equation}
	where we define 
	\begin{equation*}
		\textstyle C_{12}=2\left(\Vert\bu_{tt}\Vert_{L^2(0,t_1)}^2+4C_2^2\Vert\nabla\bu_t\Vert_{L^2(0,t_1)}^2+\Vert\nabla p_t\Vert_{L^2(0,t_1)}^2+2C_2^2M^4C^\star\right).
	\end{equation*}
	We can derive from \eqref{err:proj:grad} with $n=1$ that
	\begin{equation}\label{err:dtep1}
		\textstyle \tau^2\Vert\nabla d_te_p^1\Vert^2\leq\tau^{-2}(\Vert e_{\bu}^1\Vert^2+\Vert\hat{e}_{\bu}^1\Vert^2)+\tau^2\Vert\nabla d_tp(t_1)\Vert^2\leq (4C_{12}+\Vert\nabla d_tp(t_1)\Vert^2)\tau^2.
	\end{equation}
	Combining the above estimates with \eqref{dteu:tau:end}, we finally obtain
	\begin{equation}\label{dteu:final}
		\begin{aligned}
			&\textstyle\Vert d_{t}e_{\bu}^{n+1}\Vert^2+\sum\limits_{i=1}^n\Vert d_{t}\hat{e}_{\bu}^{i+1}-d_{t}e_{\bu}^{i}\Vert^2+\nu\tau\sum\limits_{i=1}^n\Vert\nabla d_{t}\hat{e}_{\bu}^{i+1}\Vert^2+\tau^2\Vert\nabla d_te_p^{n+1}\Vert^2\\
			\textstyle\leq&\textstyle\left((6+\tau)C_{12}+\Vert\nabla p_t\Vert_{L^2(0,t_1;L^2{\Omega})}^2+C_{10}C_{11}\right)\tau^2,
		\end{aligned}
	\end{equation}
	which implies in particular
	\begin{equation}\label{eu-eu}
		\textstyle\Vert e_{\bu}^{n+1}-e_{\bu}^n\Vert=\tau\Vert d_te_{\bu}^{n+1}\Vert\leq \sqrt{(6+\tau)C_{12}+\Vert\nabla p_t\Vert_{L^2(0,t_1;L^2{\Omega})}^2+C_{10}C_{11}}\tau^2.
	\end{equation}
	Hence thanks to Theorem \ref{velocity estimate} and \eqref{p:sum}, \eqref{eu-eu}, we can derive from the above that
	\begin{equation*}\label{p:final}
		\begin{aligned}
			&\tau\sum\limits_{i=0}^n\Vert e_p^{i+1}\Vert^2\\
			&\leq\textstyle\hat{C}\tau(\frac{1}{\tau^2} \sum\limits_{i=0}^n\Vert e_{\bu}^{i+1} - e_{\bu}^i\Vert_{-1}^2+\nu\sum\limits_{i=0}^n\Vert\nabla \hat{e}_{\bu}^{i+1}\Vert^2+\tau\int_{0}^{t_{n+1}} \Vert\bu_{tt}\Vert_{-1}^2 dt+\tau\int_{0}^{t_{n+1}}\Vert\bu_t\Vert^2 dt\\
			&\textstyle\qquad\qquad\qquad+\sum\limits_{i=0}^n\Vert e_{\bu}^i\Vert^2+\sum\limits_{i=0}^n\Vert\nabla e_{\bu}^i\Vert^4+\sum\limits_{i=0}^n|e_Q^{i+1}|^2)\\
			\textstyle&\leq\textstyle\hat{C}((6+\tau)C_{12}+\Vert\nabla p_t\Vert_{L^2(0,t_1;L^2(\Omega))}^2+C_{10}C_{11})T+(C_6C_8+C_9)(1+2T)+2M+(C_6C_8+C_9)^2T)\tau^2,
		\end{aligned}
	\end{equation*}
	provided that $\tau,~\theta$ satisfies the following condition $\tau\leq \max\{\frac{3\nu^2}{256C_0^2C_2^2(1+C_1)^6},\tau_0\}$ and $\theta\geq\max\{8(1+C_0),4C_2(1+C_1)\sqrt{\frac{C_5}{\nu}(M^4+(1+C_1)^4C)}\}$.
	
	The proof is complete.
\end{proof}
\section{Numerical experiments}\label{num}
In this section, we will demonstrate the accuracy of the P-DRLM1 scheme with pressure correction for the NS equations through numerical experiments. All the tests are implemented on the  finite element software package FreeFEM++\cite{hecht2012new}.
\subsection{Convergence test}
We first carry out the convergence test to show the first-order temporal accuracy by the lattice-vortex problem \cite{MajdaBertozzi,belding2022efficient,SchroederLube}, whose true solution is given as follow:
\begin{equation*}
	\left \{
	\begin{aligned}
		&\textstyle u(x,y,t)= sin(2 \pi x) sin(2 \pi y) exp(-8 \nu\pi^2 t),\\
		&\textstyle v(x,y,t)=cos(2 \pi x)cos(2\pi y) exp(-8\nu \pi^2 t),\\
		&\textstyle p(x,y,t)=\frac{1}{2}(1- sin^2(2 \pi x)- cos^2(2 \pi  y)) exp(-16\nu\pi^2 t).
	\end{aligned}
	\right.
\end{equation*}
so that the source term $\bff=0$. The spatial domain is set as $\Omega = (0,1)^2$ with the Dirichlet boundary condition and the terminal time $T=1$. We fix the stabilizing parameter of P-DRLM1 as the standard choice $\theta=1$ and the viscosity is chosen as $\nu=10^{-1}$. 
The spatial discretization is based on the finite element method. Due to the inf-sup condition is applied to show the optimal error estimate for the pressure, we use Taylor-Hood element (P2-P1) \cite{BrennerScott} for the velocity and pressure pair and fix the spatial step size as $h=0.01$. The temporal step size is decreased by a factor \textcolor{blue}{ of 2,} and its corresponding results are present in Table \ref{tab-conv:m0}. The first-order convergent rate can be easily observed both for the velocity and pressure, which are consistent with the error estimates in Theorems \ref{velocity estimate} and \ref{pressure estimate}.
\begin{table}[!ht]
	\centering
	\begin{tabular}{|c|cccccc|}
		\hline
		$\tau$ & $\|e_{\bu}\|$ & Rate & $\|e_Q\|$ & Rate & $\Vert e_{p}\Vert$ & Rate \\
		\hline
		$1/32$ & 1.1836e-04 & - & 1.6093e-02 & - & 3.6038e-02 & - \\
		\hline
		$1/64$ & 5.1654e-05 & 1.1962 & 7.9698e-03 &1.0138  & 1.7125e-02 & 1.0735 \\
		\hline
		$1/128$ & 2.3950e-05 & 1.1089 & 3.9278e-03 & 1.0208 & 8.1238e-03 & 1.0759 \\
		\hline
		$1/256$ & 1.1508e-05 & 1.0574 & 1.9459e-03 & 1.0133 & 3.9219e-03 & 1.0506 \\
		\hline
	\end{tabular}
	\caption{Numerical results on the errors of the simulated velocity and pressure at the terminal time $T = 1$ produced by the proposed P-DRLM1 scheme  for the lattice-vortex problem with the viscosity $\nu=10^{-1}$. The stabilizing parameter $\theta=1$ and we also set the spatial step size to be fixed as $h=0.01$.}
	\label{tab-conv:m0}
\end{table}
\subsection{Lid-driven cavity flow}
Next, we evaluate the robustness of the proposed P-DRLM1 scheme via a realistic and challenging physical simulation, the well-known lid-driven cavity flow problem \cite{1982High,JuWang2017,LiRui2018SIAM,Li2022MOC}. For the 2D case, the computational domain is defined as $\Omega=(0,1)^2$, bounded by three stationary walls (at $x = 0$, $x = 1$, and $y = 0$) with no-slip boundary conditions, and a moving lid (at $y = 1$) with a tangential unit velocity. We set $Re=5000$ and $\theta=100$, and employ the P-DRLM1 scheme with a spatial resolution of $h=0.01$ {\color{magenta} and time step $\tau=0.002$}. Contour plots of the velocity magnitude at $t=2, 4, 8, 10, 20$, and $80$ are presented in Fig.\ref{fig-mag-cont}. The method is observed to be stable and robust.
%It can be seen that our method is quite stable and robust. 
Moreover, 
the plots agree well with the results reported in \cite{Doan2025JCP}, which were obtained  by the second-order numerical scheme.
In addition, we plot the velocity on the center line at $t=80$ compared with the benchmark results in Figure \ref{fig-mag-cont:mid}. As we can see, our velocity is very close to the benchmark results \cite{1982High}, especially in the turning points, like the $x-$component around $y=0.1$ and $y=0.97$ and $y-$component around $x=0.1$ and $x=0.96$. Hence, it demonstrates that the proposed scheme P-DRLM1 accurately captures the dynamical evolution of the velocity field,. 
\begin{figure}[htbp]
	\centering
	\begin{minipage}{0.32\textwidth}
		\centering
		\includegraphics[width=\textwidth]{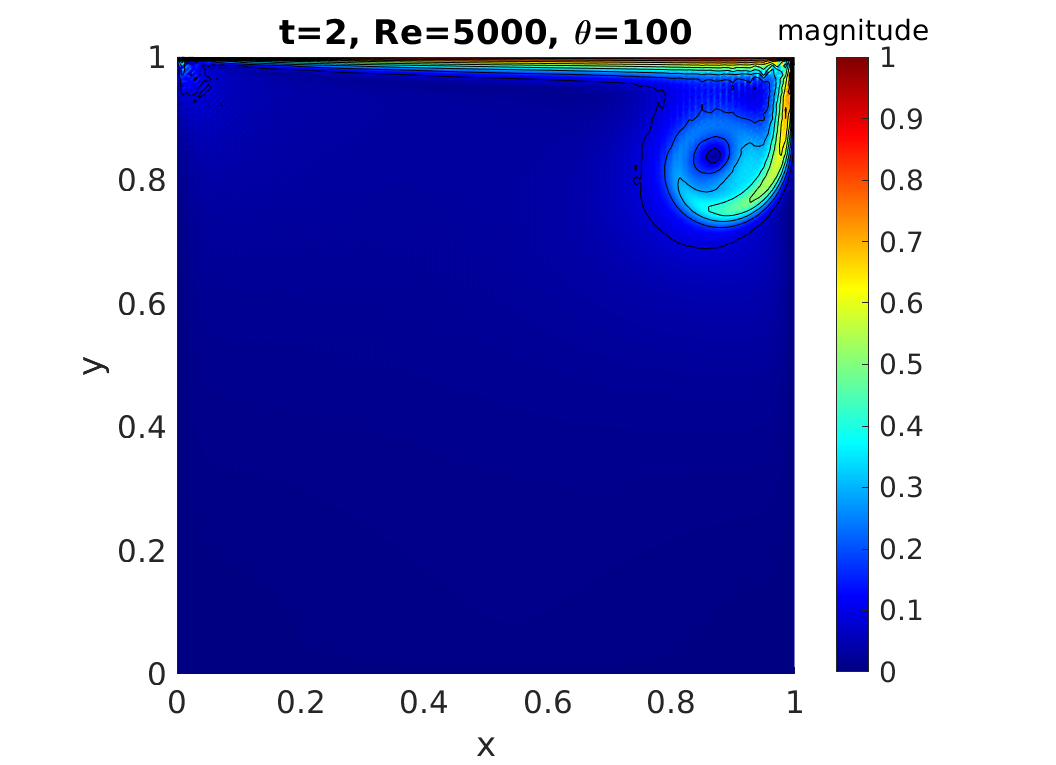}
	\end{minipage}
	\hfill
	\begin{minipage}{0.32\textwidth}
		\centering
		\includegraphics[width=\textwidth]{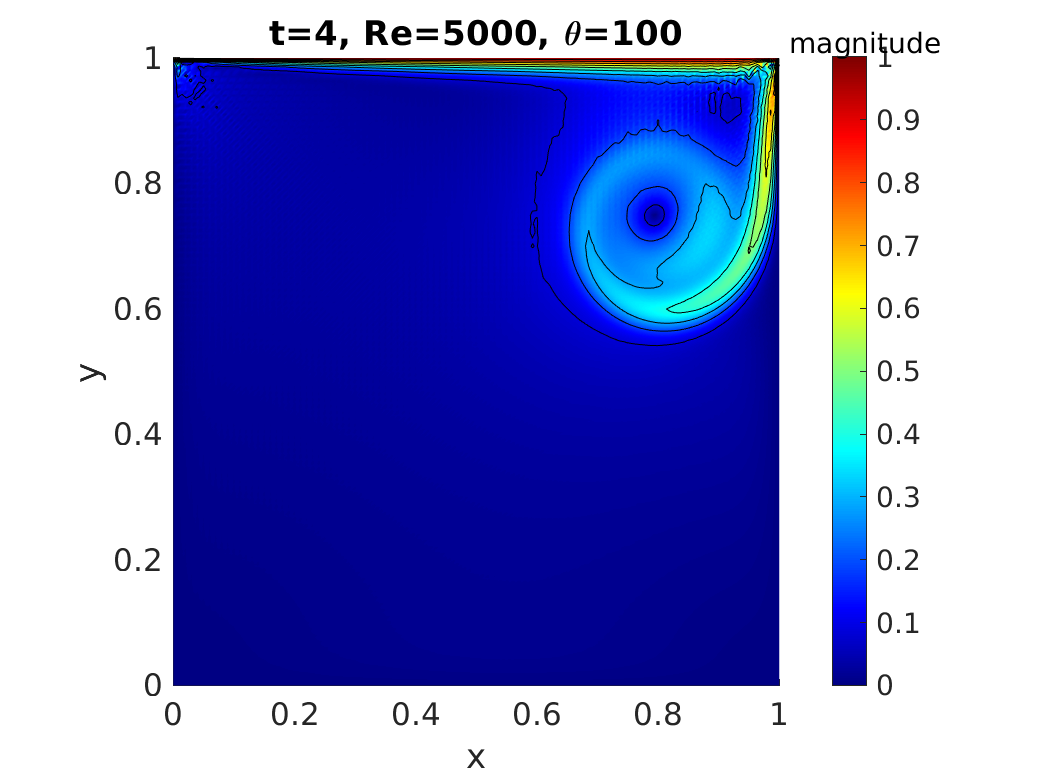}
	\end{minipage}
	\hfill
	\begin{minipage}{0.32\textwidth}
		\centering
		\includegraphics[width=\textwidth]{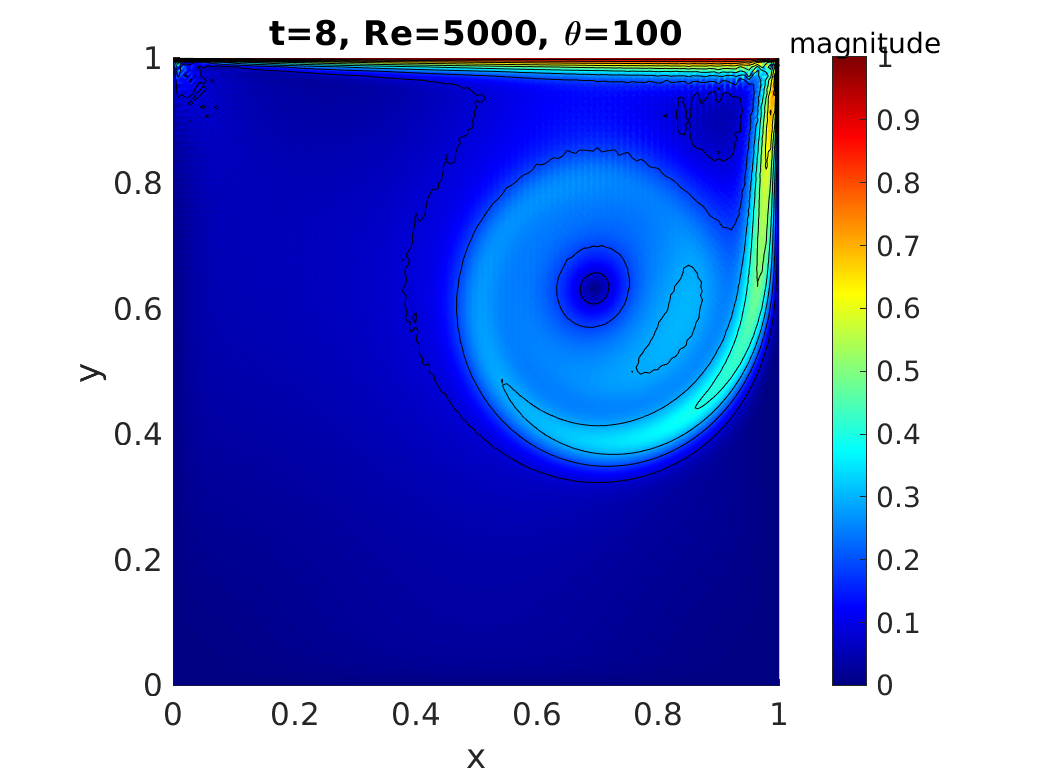}
	\end{minipage}
	
	\vspace{0.3cm}
	
	\begin{minipage}{0.32\textwidth}
		\centering
		\includegraphics[width=\textwidth]{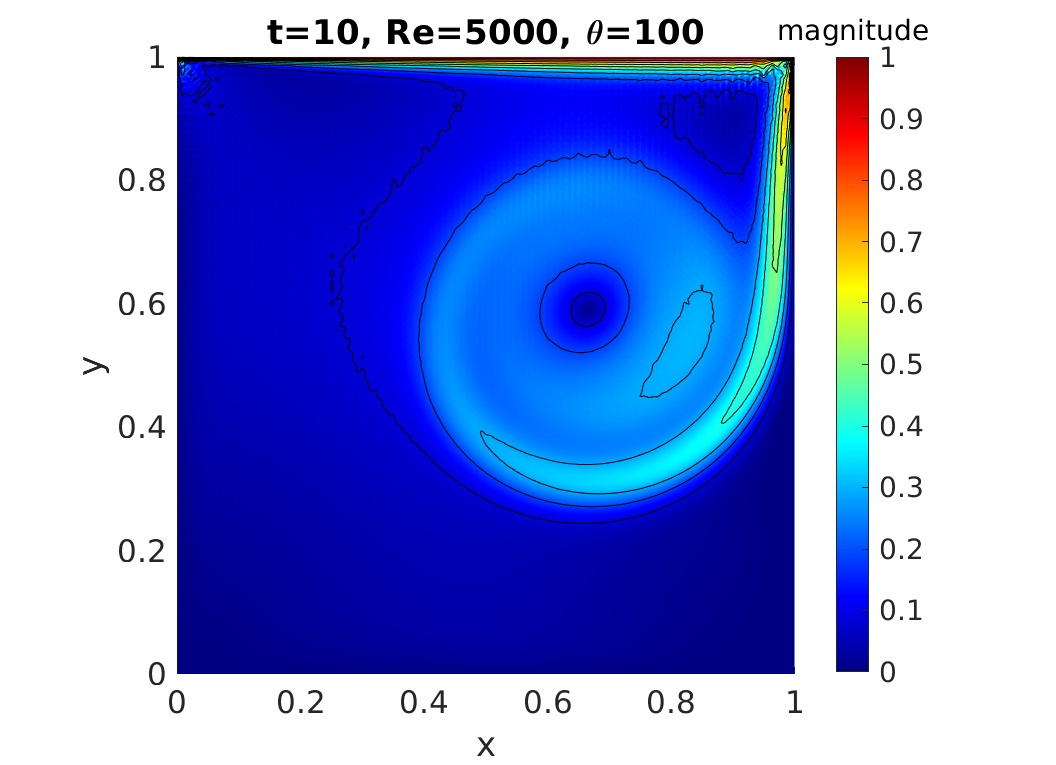}
	\end{minipage}
	\hfill
	\begin{minipage}{0.32\textwidth}
		\centering
		\includegraphics[width=\textwidth]{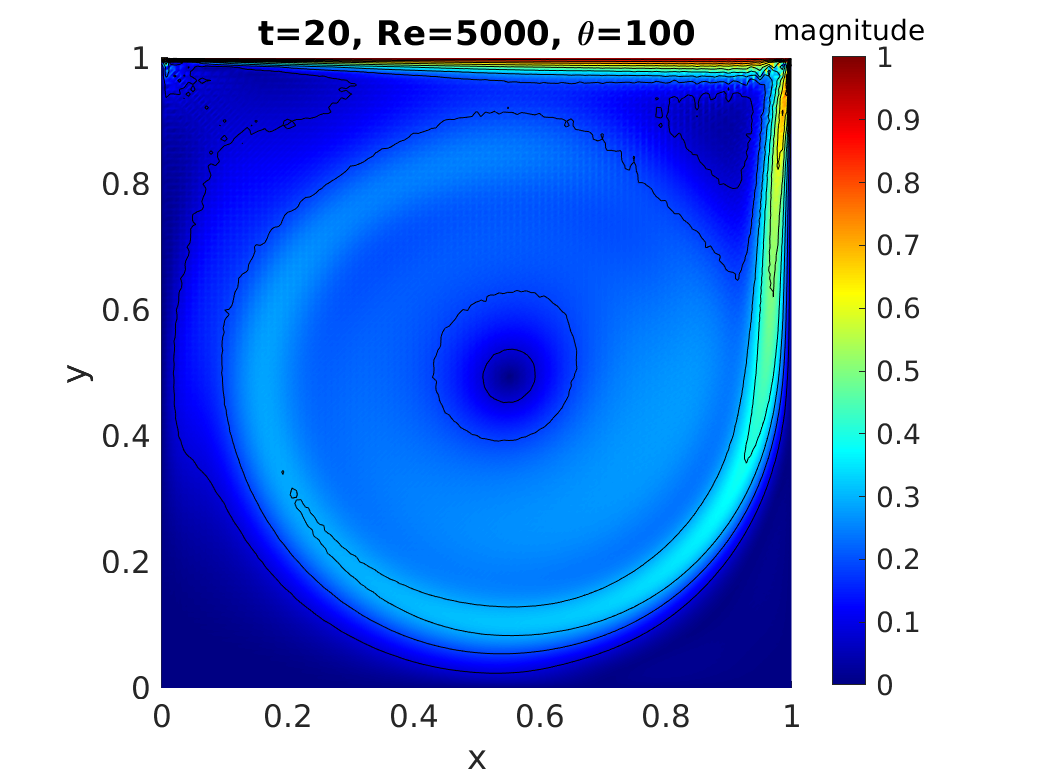}
	\end{minipage}
	\hfill
	\begin{minipage}{0.32\textwidth}
		\centering
		\includegraphics[width=\textwidth]{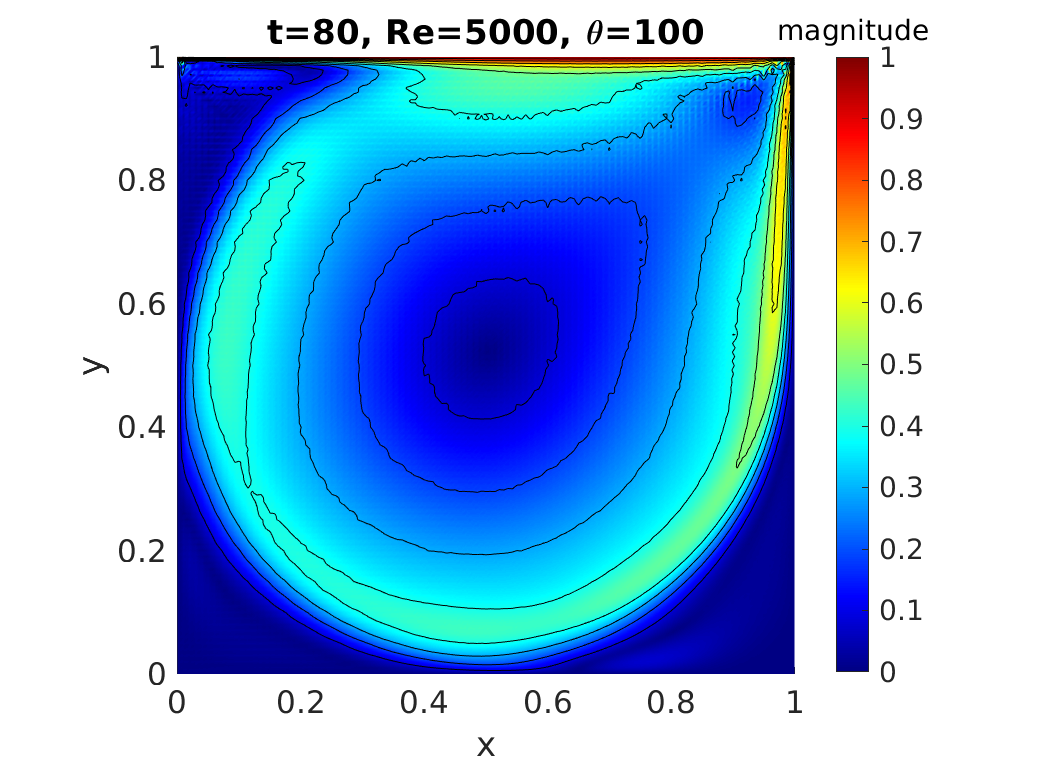}
	\end{minipage}
	
	\caption{Contour plots of the velocity magnitude at times $t=2, 4, 8, 10, 20$, and $80$ by P-DRLM1 scheme.}
	\label{fig-mag-cont}
\end{figure}

\begin{figure}[htbp]
	\centering
	\includegraphics[width=0.45\textwidth]{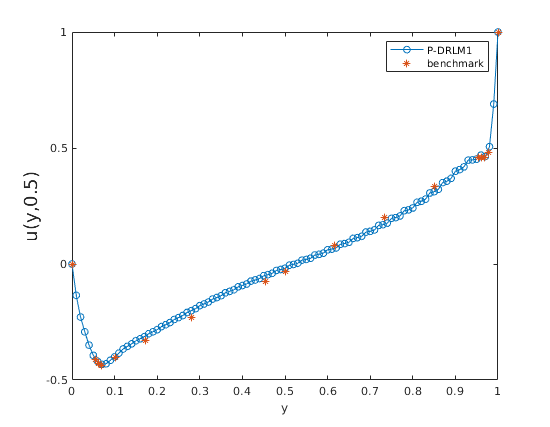}
	\includegraphics[width=0.48\textwidth]{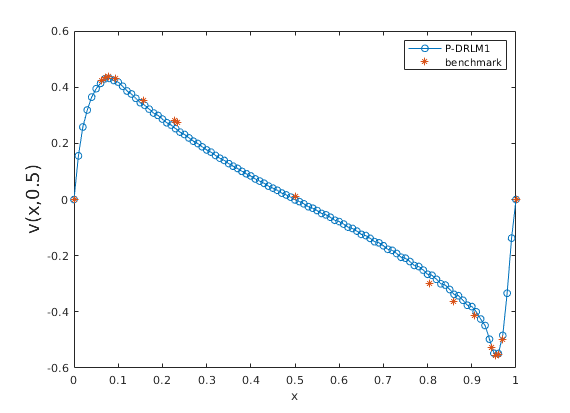} 
	\caption{The velocity at the center line with $x$-component velocity at $x = 0.5$ (left) and $y$-component velocity at $y = 0.5$ (right) for $\text{Re} = 5000$ at $t=80$.}
	\label{fig-mag-cont:mid}
\end{figure}

%The numerical results demonstrate that the proposed scheme accurately captures the dynamical evolution of the velocity field, and the final steady state is in excellent agreement with the benchmark data \cite{1982High}.

\section{Conclusion}\label{sect:con}
In this work, we established a rigorous temporal error analysis for the first-order pressure-correction DRLM scheme applied to the Navier-Stokes equations. By introducing a Lagrange multiplier to linearize the nonlinear terms and employing a projection method to decouple pressure and velocity, we derived optimal convergence rates for both variables. Numerical experiments confirmed the theoretical estimates.

While higher-order schemes are often more desirable in engineering applications, their analysis remains significantly more challenging and will be explored in future work. Specifically, we plan to extend our error analysis to the second-order P-DRLM2 scheme. Additionally, we aim to adapt the P-DRLM1 and P-DRLM2 methods to more complex systems, such as magnetohydrodynamics (MHD) and two-phase flows.

\section*{Acknowledgement}
%%%%%%%%%%%%%%%%%%%%%%%%%%
R. Lan's work is partially supported by National Natural Science Foundation of China under grant number 12301531, Shandong Provincial Natural Science Fund for Excellent Young Scientists Fund Program (Overseas) under grant number 2023HWYQ-064, Shandong Provincial Youth Innovation Project under the grant number 2024KJN057, and  the OUC Scientific Research Program for Young Talented Professionals.
H. Wang's work is partially supported by National Natural Science Foundation of China under grant number 12101526 and Young Elite Scientists Sponsorship Program by CAST 2023QNRC001.

\bibliographystyle{abbrv} 
\bibliography{reference}
\end{document}